\newcommand{\mf}{\mathfrak}
\newcommand{\g}{\mf{g}}
\newtheorem{teorem}{Theorem}[section]
\newtheorem{proposition}[teorem]{Proposition}
\newtheorem{lemma}[teorem]{Lemma}
\newtheorem{corollary}[teorem]{Corollary}
\theoremstyle{remark}
\newtheorem{remark}[teorem]{Remark}
\definecolor{orange}{rgb}{1,.549,0}
 \definecolor{GreenYellow }{rgb}{ 0.15,   0.69, 0} % PANTONE 388
\definecolor{Yellowone}{rgb}{ 0, 1., 0} % PANTONE YELLOW
\definecolor{Goldenrod }{rgb}{  0, 0.10, 0.84} % PANTONE 109
\definecolor{Dandelion }{rgb}{ 0, 0.29, 0.84} % PANTONE 123
\definecolor{Apricot }{rgb}{ 0, 0.32, 0.52} % PANTONE 1565
\definecolor{Peach }{rgb}{ 0, 0.50, 0.70} % PANTONE 164
\definecolor{GreenYellow}{cmyk}{0.15,0,0.69,0}
\definecolor{RoyalPurple}{cmyk}{0.75,0.90,0,0}
\definecolor{Yellow}{cmyk}{0,0,1,0}
\definecolor{BlueViolet}{cmyk}{0.86,0.91,0,0.04}
\definecolor{Goldenrod}{cmyk}{0,0.10,0.84,0}
\definecolor{Periwinkle}{cmyk}{0.57,0.55,0,0}
\definecolor{Dandelion}{cmyk}{0,0.29,0.84,0}
\definecolor{CadetBlue}{cmyk}{0.62,0.57,0.23,0}
\definecolor{Apricot}{cmyk}{0,0.32,0.52,0}
\definecolor{CornflowerBlue}{cmyk}{0.65,0.13,0,0}
\definecolor{Peach}{cmyk}{0,0.50,0.70,0}
\definecolor{MidnightBlue}{cmyk}{0.98,0.13,0,0.43}
\definecolor{Melon}{cmyk}{0,0.46,0.5,0}
\definecolor{NavyBlue}{cmyk}{0.94,0.54,0,0}
\definecolor{YellowOrange}{cmyk}{0,0.42,1,0}
\definecolor{RoyalBlue}{cmyk}{1,0.50,0,0}
\definecolor{Orange}{cmyk}{0,0.61,0.87,0}
\definecolor{Blue}{cmyk}{1,1,0,0}
\definecolor{BurntOrange}{cmyk}{0,0.51,1,0}
\definecolor{Cerulean}{cmyk}{0.94,0.11,0,0}
\definecolor{Bittersweet}{cmyk}{0,0.75,1,0.24}
\definecolor{Cyan}{cmyk}{1,0,0,0}
\definecolor{RedOrange}{cmyk}{0,0.77,0.87,0}
\definecolor{ProcessBlue}{cmyk}{0.96,0,0,0}
\definecolor{Mahogany}{cmyk}{0,0.85,0.87,0.35}
\definecolor{SkyBlue}{cmyk}{0.62,0,0.12,0}
\definecolor{Maroon}{cmyk}{0,0.87,0.68,0.32}
\definecolor{Turquoise}{cmyk}{0.85,0,0.20,0}
\definecolor{BrickRed}{cmyk}{0,0.89,0.94,0.28}
\definecolor{TealBlue}{cmyk}{0.86,0,0.34,0.02}
\definecolor{Red}{cmyk}{0,1,1,0}
\definecolor{Aquamarine}{cmyk}{0.82,0,0.30,0}
\definecolor{OrangeRed}{cmyk}{0,1,0.50,0}
\definecolor{BlueGreen}{cmyk}{0.85,0,0.33,0}
\definecolor{RubineRed}{cmyk}{0,1,0.13,0}
\definecolor{Emerald}{cmyk}{1,0,0.50,0}
\definecolor{WildStrawberry}{cmyk}{0,0.96,0.39,0}
\definecolor{JungleGreen}{cmyk}{0.99,0,0.52,0}
\definecolor{Salmon}{cmyk}{0,0.53,0.38,0}
\definecolor{SeaGreen}{cmyk}{0.69,0,0.50,0}
\definecolor{CarnationPink}{cmyk}{0,0.63,0,0}
\definecolor{Green}{cmyk}{1,0,1,0}
\definecolor{Magenta}{cmyk}{0,1,0,0}
\definecolor{ForestGreen}{cmyk}{0.91,0,0.88,0.12}
\definecolor{VioletRed}{cmyk}{0,0.81,0,0}
\definecolor{PineGreen}{cmyk}{0.92,0,0.59,0.25}
\definecolor{Rhodamine}{cmyk}{0,0.82,0,0}
\definecolor{LimeGreen}{cmyk}{0.50,0,1,0}
\definecolor{Mulberry}{cmyk}{0.34,0.90,0,0.02}
\definecolor{YellowGreen}{cmyk}{0.44,0,0.74,0}
\definecolor{RedViolet}{cmyk}{0.07,0.90,0,0.34}
\definecolor{SpringGreen}{cmyk}{0.26,0,0.76,0}
\definecolor{Fuchsia}{cmyk}{0.47,0.91,0,0.08}
\definecolor{OliveGreen}{cmyk}{0.64,0,0.95,0.40}
\definecolor{Lavender}{cmyk}{0,0.48,0,0}
\definecolor{RawSienna}{cmyk}{0,0.72,1,0.45}
\definecolor{Thistle}{cmyk}{0.12,0.59,0,0}
\definecolor{Sepia}{cmyk}{0,0.83,1,0.70}
\definecolor{Orchid}{cmyk}{0.32,0.64,0,0}
\definecolor{Brown}{cmyk}{0,0.81,1,0.60}
\definecolor{DarkOrchid}{cmyk}{0.40,0.80,0.20,0}
\definecolor{Tan}{cmyk}{0.14,0.42,0.56,0}
\definecolor{Purple}{cmyk}{0.45,0.86,0,0}
\definecolor{Gray}{cmyk}{0,0,0,0.50}
\definecolor{Plum}{cmyk}{0.50,1,0,0}
\definecolor{Black}{cmyk}{0,0,0,1}
\definecolor{Violet}{cmyk}{0.79,0.88,0,0}
\definecolor{White}{cmyk}{0,0,0,0}
 \definecolor{rltred}{rgb}{0.75,0,0}
   \definecolor{rltgreen}{rgb}{0,0.5,0}
   \definecolor{oneblue}{rgb}{0,0,0.75}
   \definecolor{marron}{rgb}{0.64,0.16,0.16}
   \definecolor{forestgreen}{rgb}{0.13,0.54,0.13}
   \definecolor{purple}{rgb}{0.62,0.12,0.94}
   \definecolor{dockerblue}{rgb}{0.11,0.56,0.98}
   \definecolor{freeblue}{rgb}{0.25,0.41,0.88}
   \definecolor{myblue}{rgb}{0,0.2,0.4}
 \definecolor{Melon}{rgb}{ 0.46, 0.50, 0}
 \definecolor{Melone}{rgb}{ 0, 0.46, 0.50}
\begin{document}
\title{On certain modules of covariants in exterior algebras}
\author{Salvatore Dolce\footnote{Dipartimento di Matematica, Universit\`a La Sapienza di Roma, P.le Aldo Moro 5, 00185 Rome, Italy;\protect\\ E-mail address: dolce@mat.uniroma1.it;\protect\\ 2010 Mathematics Subject Classification 17B20;\protect\\ Key words and phrases. Invariant theory, symmetric spaces, exterior algebras, polynomial trace identities.}}

\maketitle

\begin{abstract}
We study the structure of the space of covariants $B:=\left(\bigwedge (\g/\mathfrak k)^*\otimes \g\right)^{\mathfrak k},$ for a certain class of infinitesimal symmetric spaces $(\g,\mathfrak k)$ such that the space of invariants $A:=\left(\bigwedge (\g/\mathfrak k)^*\right)^{\mathfrak k}$ is an exterior algebra $\wedge (x_1,...,x_r),$ with $r=rk(\g)-rk(\mathfrak k)$.

We prove that they are free modules over the subalgebra $A_{r-1}=\wedge (x_1,...,x_{r-1})$ of rank $4r$. In addition we will give an explicit basis of $B$.

As particular cases we will recover same classical results. In fact we will describe the structure of $\left(\bigwedge (M_n^{\pm})^*\otimes M_n\right)^G$, the space of the $G-$equivariant matrix valued alternating multilinear maps on the space of (skew-symmetric or symmetric with respect to a specific involution) matrices, where $G$ is the symplectic group or the odd orthogonal group. Furthermore we prove new polynomial trace identities.

\end{abstract}

\section*{Introduction} 
 
In this paper we study the ring of invariant skew symmetric multilinear functions on a linear representation of an algebraic group $G$, that is from a geometric viewpoint constant coefficient invariant differential forms. It is a classical fact that these forms give the cohomology of compact Lie groups and more generally compact symmetric spaces. (See for example \cite{Bo2}, \cite{Cartan}, \cite{Chevalley})

In the first part of the paper we are mostly concerned with classical groups. Let $G=GL(n)$ be the group of invertible $n\times n$ complex matrices. $G$ acts on the space $M_n$ of complex $n\times n$ matrices by conjugation. We first study, essentially following Procesi \cite{Pr1}, the algebra of invariant skew symmetric multilinear functions on $M_n$.
This turns out to be closely related to the theory of rings with Polynomial Identities.  

A fundamental role for our purposes will be played by the the space of $G$-equivariant multilinear alternating matrix valued maps on the $m-$tuples of matrices.

We will endow this space with a natural structure of algebra, by defining a suitable skew-symmetric product. It will be worthwhile to consider this algebra as a module on the algebra of invariants. 

Bresar, Procesi and Spenko \cite{BPS} have shown, in the case of the linear group $GL(n)$, that this algebra is a free module on a certain subalgebra of invariants, with a natural explicit basis. Section 1 is devoted to recollect these facts.

Most  of our paper is devoted to  extend this and related results to the case of certain symmetric spaces. Starting with $GL(n)$ (or better $SL(n)$) consider the orthogonal (respectively, when $n$ is even, symplectic) involution $\sigma$ and denote by $SO(n)$ (respectively $Sp(n)$) the special orthogonal (respectively symplectic) groups of elements in $SL(n)$ fixed by $\sigma$. 
$\sigma$ induces a linear involution on the space $M_n$ which decomposes as the direct sum $M_n^+\oplus M_n^-$ of the $+1$ and $-1$  eigenspaces.

In section 2 we give a precise description of the rings of invariant skew symmetric multilinear maps on $M_n^{\pm}$ and of the ring of $M_n$ valued invariant skew symmetric multilinear maps on $M_n^{\pm}$ at least in the case of the symplectic involution and of the orthogonal involution for $n$ odd.

We will prove that, similarly to the linear group, the space of the $G$-equivariant matrix valued alternating multilinear maps of the space of $m-$tuples of matrices (symmetric or skew-symmetric) is a free module on a certain subalgebra of invariants. Also in these cases, we exhibit an explicit basis of this module. 

Furthermore we recover some classical results analog to the Amitsur-Levitzki theorem. More precisely, for the symplectic group we recover a result of Rowen (see \cite{Ro}) which states that the skew-symmetric standard polynomial of degree $4n-2$ vanishes on $2n\times 2n$ matrices, symmetric with respect to the symplectic involution. 
For the odd orthogonal group we recover the corresponding result of Hutchinson (see \cite{Hu}) which states that the skew-symmetric standard polynomial of degree $4n$ is zero when restricted to skew-symmetric matrices $2n+1\times 2n+1$.

The case of the even orthogonal group deserves a study on its own. We will deal with it in a subsequent paper (\cite{Dolce}).

Inspired by the strategy  of \cite{PDP}, in which the result of Bresar et al. has been extended to a general result  for a simple Lie algebra $\mathfrak g$ concerning the space of invariant skew symmetric $\mathfrak g$ valued multilinear maps on $\mathfrak g$, we extend our results to a certain class of symmetric pairs (see section 3 for details). Indeed this will not be too hard since, apart from the symplectic and orthogonal (for odd $n$) involutions, the only other cases we need to consider are the pairs $(\mathfrak {so}(2n),\mathfrak {so}(2n-1))$ and $(\mathfrak e_6,\mathfrak f_4)$. The first case is easy to deal with since the corresponding symmetric space is an odd dimensional sphere. The second is treated combining ad hoc reasoning and computer aided computation (we use the software LiE). 
\subsection*{Acknowledgements}
I want to thank professors C.De Concini and C.Procesi for precious suggestions and advices. 

\section{General setting, ideas and goals}
\subsection{Antisymmetry}
In this work we will study multilinear antisymmetric identities. Let us first introduce the appropriate setting for this.
By the {\em antisymmetrizer} we mean the operator that sends a multilinear application $f(x_1,\ldots,x_h)$ into the antisymmetric application $ \sum_{\sigma\in  S_h} \epsilon_\sigma f(x_{\sigma(1)},\ldots,x_{\sigma(h)})$.

An important example for us is obtained applying the antisymmetrizer to the noncommutative monomial $x_1\cdots x_h$. We get the standard polynomial of degree $h$,  $$St_h(x_1,\ldots,x_h)=\sum_{\sigma\in  S_{h}}\epsilon_\sigma   x_{\sigma(1)} \dots x_{\sigma(h)}.$$
Up to a scalar multiple, this is the only multilinear antisymmetric noncommutative polynomial of degree $h$.

Let $R$ be any algebra (not necessarily associative) over a field $\mathbb{F}$, and let $V$ be a finite dimensional vector space  over $\mathbb F$. The set of  multilinear  antisymmetric functions from $V^k$ to $R$ can be identified in a natural way with $\bigwedge^kV^*\otimes R$.  Using the algebra structure of $R$ we have a wedge product of these functions; for $G\in \bigwedge^hV^*\otimes R,\ H\in \bigwedge^kV^*\otimes R$ we define
$$(G\wedge H)( v_1,\ldots,v_{h+k}):=\frac{1}{h!k!}\sum_{\sigma\in S_{h+k}}\epsilon_\sigma G (v_{\sigma(1)},\ldots,v_{\sigma(h)})H(v_{\sigma(h+1) },\ldots,v_{\sigma(h+k)}) $$
$$=\sum_{\sigma\in  S_{h+k}/ S_{h}\times  S_{k}}\epsilon_\sigma G (v_{\sigma(1)},\ldots,v_{\sigma(h)})H(v_{\sigma(h+1) },\ldots,v_{\sigma(h+k)}).$$
It is easy to show (see \cite{Pr1}) that $St_a\wedge St_b=St_{a+b} $.

With this multiplication 
the algebra of multilinear  antisymmetric functions from $V $ to $R$ is isomorphic to the tensor product algebra $\bigwedge V^*\otimes R$.  We shall denote by $\wedge$ the product in this algebra.

Assume now that $R$ is an associative algebra and $V\subset R$.  The inclusion map $X:V\to R$ is of course antisymmetric, since the symmetric group on one variable is trivial, hence $X\in \bigwedge V^*\otimes R$. By iterating the definition of wedge product we have the important fact:
\begin{proposition}
As a multilinear function, each power  $X^a:=X^{\wedge a}$  equals the standard polynomial $St_a$  computed in $V$.
\end{proposition}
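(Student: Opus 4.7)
The plan is induction on $a$, bootstrapping off the identity $St_a\wedge St_b=St_{a+b}$ already cited in the text from Procesi.

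First, I would establish the base case $a=1$. The standard polynomial of degree one is $St_1(x_1)=x_1$, so as a multilinear function from $V$ to $R$ it is literally the inclusion map $X$. Hence $X^1=X=St_1$ on $V$, starting the induction.

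For the inductive step, assume $X^{a-1}=St_{a-1}$ as a multilinear function on $V^{a-1}$. Because the wedge product defined in the text makes $\bigwedge V^*\otimes R$ into an associative algebra when $R$ is associative, we may write $X^a = X^{a-1}\wedge X$. Applying the inductive hypothesis and then the cited identity $St_a\wedge St_b=St_{a+b}$ with $(a,b)=(a-1,1)$ yields $X^a = St_{a-1}\wedge St_1 = St_a$ on $V$, completing the induction.

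If one wishes to avoid citing $St_a\wedge St_b=St_{a+b}$ and instead prove the proposition directly, one can unpack the definition of $\wedge$ by choosing coset representatives of $S_a/(S_{a-1}\times S_1)$ indexed by which element $i\in\{1,\dots,a\}$ is placed in the last slot (with sign $(-1)^{a-i}$) while the remaining indices are left in their natural order. This expresses $X^a(v_1,\dots,v_a)$ as $\sum_{i=1}^{a}(-1)^{a-i}\,X^{a-1}(v_1,\dots,\widehat{v_i},\dots,v_a)\,v_i$, and by induction this equals the familiar cofactor-style expansion of $St_a(v_1,\dots,v_a)$ along the last variable. In either version, there is no genuine obstacle: the statement is essentially a convenient reformulation of Procesi's identity, since all the combinatorial content is already absorbed in the proof of $St_a\wedge St_b=St_{a+b}$; the only point one must verify is the harmless observation that $X$ itself coincides with $St_1$.
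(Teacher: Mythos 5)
Your induction is correct and matches the paper's (implicit) argument exactly: the paper simply says the proposition follows ``by iterating the definition of wedge product,'' which together with the cited identity $St_a\wedge St_b=St_{a+b}$ is precisely your inductive step $X^a=X^{a-1}\wedge X=St_{a-1}\wedge St_1=St_a$, anchored at $X=St_1$. The direct cofactor-style expansion you sketch as an alternative is also sound, but it is not needed beyond what the paper already assumes.
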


We consider now an example which will be useful in the following.
We apply the previous considerations to $V=R=M_n(\mathbb C):=M_n$;  the group $G=PGL(n,\mathbb C)$ acts  on this space, and hence on functions, by conjugation and it is interesting to study the algebra of $G$--equivariant maps\begin{equation}
\label{an}B:=(\bigwedge M_n^*\otimes M_n)^G.
\end{equation} 

This among other topics is  discussed in \cite{BPS}. Here we provide a slightly different approach to the study of $B$ and we generalize it to the other classical groups.

Let us first consider the space of invariants $A:=\left(\bigwedge M_n^*\right)^G$. We see that $B$ is naturally a $A$-module. It follows from results on Chevalley trasgression \cite{Chevalley} and Dynkyn \cite{Dynkyn} that $A$ is the exterior algebra in the elements:
$$T_h:=Tr(St_{2h+1}(x_1,...,x_{2n+1})),\ \ \ \ \ \ i=0,...,n-1.$$
We remark that we use only traces of the standard polynomials of odd degree since, as it is well-known, $Tr(St_{2h}(x_1,...,x_{2h})) = 0$ for every $h$, see \cite{Rosset}.

Consider now the graded super algebra $A[t]$, with deg $t=1$. Notice that for each $i$,\\ $ T_it=-tT_i.$ Define the graded algebra homomorphism 
$\pi:A[t]\to B$ by $$\sum_ja_jt^j\mapsto \sum_ja_j\wedge X^j ,$$
$a_j\in A$.  By classical results (see for example \cite{Pr2}) we know that $\pi$ is surjective. Furthermore by \cite{BPS},\cite{Pr1} we have:
\begin{teorem}\label{Procesicov}
\begin{enumerate}
\item The algebra $B$ is a free module on the subalgebra $A_{n-1}\subset A$ generated by the elements $T_i$, $i=0,...,n-2$, with basis $1,...,X^{2n-1}$.
\item The kernel of the canonical homomorphim $\pi:A[t]\rightarrow B$ is the principal ideal generated by 
$$\sum_{i=0}^{ n-1}t^{2i} T_{n- i-1}-nt^{2n-1}.$$
\end{enumerate}
\end{teorem}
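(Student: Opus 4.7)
My plan is to split the theorem into three ingredients: surjectivity of $\pi$, verification of the relation $\pi(R)=0$, and a Poincar\'e series count that simultaneously pins down the freeness in (1) and the kernel in (2). Surjectivity of $\pi$ follows from Procesi's first fundamental theorem for matrix concomitants (\cite{Pr2}): every multilinear $G$-equivariant map $M_n^{\otimes k}\to M_n$ is a linear combination of products of traces of noncommutative monomials with a final noncommutative monomial, and after antisymmetrization these become products $T_{h_1}\wedge\cdots\wedge T_{h_r}\wedge X^s$ (using $\mathrm{Tr}(St_{2h})=0$ from \cite{Rosset}), which is precisely the image of $\pi$.

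Next I would verify the identity $\pi(R)=0$, i.e.\
$$n\,X^{2n-1}\;=\;T_{n-1}+\sum_{i=1}^{n-1}T_{n-i-1}\wedge X^{2i}\qquad\text{in }B.$$
This is the multilinear, antisymmetrized form of the Cayley--Hamilton theorem: multiplying $\chi_x(x)=0$ by $x^{n-1}$ gives $x^{2n-1}=\sum_j c_j(x)\,x^{2n-1-j}$ with $c_j$ invariants of $x$; rewriting the $c_j$ in terms of the odd power sums $\mathrm{Tr}(x^{2h+1})$ via Newton's identities and polarizing in $2n-1$ variables produces the displayed formula, with the scalar $n$ coming from $\mathrm{Tr}(I)=n$.

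For part (1), the relation $R=0$ lets one replace $T_{n-1}$ by $n\,t^{2n-1}-\sum_{i=1}^{n-1}t^{2i}T_{n-i-1}$ in any expression. Combined with the consequence that $R\cdot t^k=0$ in $A[t]/(R)$ for every $k$, and with $T_{n-1}^2=0$, this yields $X^{2n}=0$ (the Amitsur--Levitzki identity) as well as further reductions that eliminate all $t$-powers above $2n-1$ modulo $A_{n-1}$; hence $\{1,X,\ldots,X^{2n-1}\}$ generates $B$ over $A_{n-1}$. Linear independence I would establish by computing the Poincar\'e series of $B$ using the character formula for the adjoint action of $PGL(n)$ on $\bigwedge M_n^*\otimes M_n$, and matching it with $\prod_{i=0}^{n-2}(1+q^{2i+1})\cdot(1+q+\cdots+q^{2n-1})$.

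For part (2), the same reductions applied inside $A[t]/(R)$ show that it is generated as an $A_{n-1}$-module by $1,t,\ldots,t^{2n-1}$, so the surjection $A[t]/(R)\twoheadrightarrow B$ is between $A_{n-1}$-modules of the same rank (by part (1)) and thus an isomorphism, giving $\ker\pi=(R)$. The main obstacle will be the verification of $\pi(R)=0$ with the correct coefficient $n$: extracting it from Cayley--Hamilton via Newton's identities, and tracking the graded-super signs (both $t$ and each $T_h$ being odd) during polarization, requires careful bookkeeping. A secondary challenge is showing that the relations inherent in the principal ideal $(R)$ suffice to cut $A[t]/(R)$ down to rank $2n$ over $A_{n-1}$; this is not automatic and in practice requires several instances of $R\cdot(\text{monomial})=0$ beyond $R$ itself.
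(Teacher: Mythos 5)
Your overall architecture --- surjectivity of $\pi$ from the FFT for matrix concomitants, one key relation $\pi(R)=0$, a dimension count to force freeness, and a rank comparison over $A_{n-1}$ to identify the kernel --- is sound and is essentially the architecture of the proof the paper is relying on. Note, though, that the paper does not reprove Theorem \ref{Procesicov}: it quotes it from \cite{BPS} and \cite{Pr1}, remarking only that the proof rests on Kostant's computation of $\dim B$ and on Amitsur--Levitzki, and then reproduces the same scheme in full detail for the symplectic and orthogonal analogues (Theorems \ref{tony}, \ref{tony2}, \ref{tony3}). Your plan for parts (1) and (2), given the relation, matches that scheme.

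The genuine gap is your derivation of $\pi(R)=0$ from Cayley--Hamilton. Polarizing $\chi_x(x)x^{n-1}=0$ and antisymmetrizing cannot produce the displayed identity: the full polarization is a symmetric multilinear map, so its antisymmetrization vanishes identically, and the ``replace each product of traces and matrices by a wedge'' shortcut gives a wrong formula. Concretely, for $n=2$ Cayley--Hamilton gives $x^3=\mathrm{Tr}(x)x^2-\tfrac12\left(\mathrm{Tr}(x)^2-\mathrm{Tr}(x^2)\right)x$, and the term-by-term wedge replacement (killing $\mathrm{Tr}(St_2)$ and $T_0\wedge T_0$) yields $X^3=T_0\wedge X^2$; but on $(E_{11},E_{12},E_{21})$ one computes $St_3=2E_{11}+E_{22}$ while $(T_0\wedge X^2)=E_{11}-E_{22}$, so this is false --- the correct relation is $2X^3=T_1+T_0\wedge X^2$, with the extra summand $T_1$ and the coefficient $2$ that your method does not produce. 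The underlying reason is that a multilinear trace identity valid for matrices over commutative rings is not preserved by the Grassmann-envelope substitution $x_i\mapsto x_ie_i$, which is what the wedge replacement amounts to. To close the gap you must either (a) run the super-Cayley--Hamilton argument of \cite{Pr1} for the generic Grassmann matrix $\xi=\sum_ix_ie_i$, working with $\xi^2$ (whose entries commute) and with the odd elements $\mathrm{Tr}(\xi^{2h+1})$, which anticommute with the entries of $\xi$ --- a genuinely super computation, not sign bookkeeping on the classical identity; or (b) reverse the logical order, as the paper does for Theorem \ref{tony2}: first prove part (1) using Kostant's dimension formula, Amitsur--Levitzki for spanning, and the trace-pairing independence argument, and then obtain the relation by expanding $T_{n-1}=\sum_hP_h\wedge X^h$ in the free basis and solving for the $P_h$ by multiplying by powers of $X$ and taking traces. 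With either repair, the remainder of your plan (deducing $2nt^{2n}\in(R)$ from the left/right multiplications by $t$, reducing $A[t]/(R)$ to the $A_{n-1}$-span of $1,t,\ldots,t^{2n-1}$, and comparing ranks) goes through as you describe.
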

The proof of this theorem uses two main results. The first is a result of Kostant (see \cite{Ko3}) on the dimension of $B$ and the second is the Amitsur-Levitzki's Theorem (see \cite{AL}).

An interesting remark is that, assuming theorem \ref{Procesicov}, one could deduce the Amitsur-Levitzki's identity.

In fact multiplying on the left by $t$ the identity above we obtain $nt^{2n}-\sum t^{2i+1}T_{n-i-1}$, while multiplying on the right by $t$ we obtain $nt^{2n}-\sum t^{2i}T_{n-i-1}t$. So $t^{2n}\in Ker(\pi)$ (by the fact that in $A[t]$ we have $T_it=-tT_i$). It is easy to see that, in our setting, $X^{2n}=0$ is equivalent to the Amitsur-Levitzki's identity.

We will use this trick to deduce other interesting identities in the case of symmetric or skew-symmetric matrices.
 \subsection{Covariants in exterior algebras of Lie algebras with involution}

In the following we will use previous ideas to show similar results for a certain class of Lie algebras with involution.

So let $\g$ be a complex finite dimensional semisimple Lie algebras and let $\sigma:\g\rightarrow \g$ be an indecomposable involution. We denote by $\g=\mathfrak{k}\oplus\mathfrak{p}$ its Cartan decomposition, where $\mathfrak{k}$ is the fixed point set of $\sigma$. By classical results (see for example \cite{Helgason}) we know that $\g$ is either simple or a sum of two simple ideals switched by the flip involution.

The main object of our interest will be the space $\left(\bigwedge\mathfrak{p}^*\otimes \g\right)^{\mathfrak{k}}$ for pairs $(\g,\sigma)$ such that the algebra of invariants $\left(\bigwedge\mathfrak{p}^*\right)^{\mathfrak{k}}$ is an exterior algebra.

For the case $\g$ non simple, it is a celebrated theorem of Hopf, Samelson and Koszul that the algebra\\ $\left(\bigwedge\mathfrak{p}^*\right)^{\mathfrak{k}}=\left(\bigwedge\mathfrak{k}^*\right)^{\mathfrak{k}}$ is always an exterior algebra over primitive generators $P_i$ of degree $2m_i+1$, where the integers $m_i$, with $m_1\leq...\leq m_r$ are the exponents of $\Delta$ and $r$ is the rank of $\mathfrak{k}$.

One of the main result of \cite{PDP} is the following theorem:
\begin{teorem}{\label{inspiration}}
The algebra $B:=\left(\bigwedge\mathfrak{k}^*\otimes \mathfrak{k}\right)^\mathfrak{k}$ is a free module of rank $2r$ on the subalgebra $A_{r-1}\subset A:=\left(\bigwedge\mathfrak{k}^*\right)^\mathfrak{k}$ generated by $P_1,...P_{r-1}$.
\end{teorem}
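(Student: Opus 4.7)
The plan is to adapt the strategy used to prove Theorem \ref{Procesicov}, with the Lie bracket on $\mathfrak{k}$ substituting for the associative product on $M_n$. The three pillars will be: (i) an explicit list of $2r$ candidate basis elements in $B$, (ii) a dimension count based on Kostant's theorem, and (iii) an $A_{r-1}$-linear independence argument obtained by restriction to a Cartan subalgebra.

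For (i), I would start from the tautological identity $X \in (\mathfrak{k}^* \otimes \mathfrak{k})^\mathfrak{k}$ and, by iterated Lie brackets of $X$ with itself, construct $\mathfrak{k}$-valued invariant antisymmetric classes $\omega_i$ of degrees $2m_i$ for $i=1,\ldots,r$. These are the natural covariant counterparts of the primitive generators $P_i\in A$ of degree $2m_i+1$: under the Chevalley transgression map, $P_i$ and $\omega_i$ appear as paired classes on the Chevalley--Eilenberg model. The proposed $A_{r-1}$-basis of $B$ will then be $\{\omega_i,\,P_r\wedge\omega_i\}_{i=1}^{r}$, a total of $2r$ elements.

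For (ii), by Kostant's results on the graded multiplicities of simple $\mathfrak{k}$-modules in $\bigwedge\mathfrak{k}$, the adjoint representation occurs in $\bigwedge\mathfrak{k}$ with total multiplicity $r\cdot 2^r$. Since $\dim A_{r-1}=2^{r-1}$, this yields $\dim_{\mathbb{C}} B = 2r\cdot\dim A_{r-1}$; hence it suffices to prove the $A_{r-1}$-linear independence of the $2r$ candidates, because spanning will then follow for free from the dimension count.

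The main obstacle is precisely this linear independence. My approach is to restrict a putative nontrivial relation to a Cartan subalgebra $\mathfrak{h}\subset\mathfrak{k}$ and use a Chevalley-type restriction theorem for exterior invariants: on $\mathfrak{h}$ the $P_i$ restrict to $W$-anti-invariant classes of distinct top degrees in $\bigwedge\mathfrak{h}^*$, while the $\omega_i$ restrict to $\mathfrak{h}$-valued classes whose degree signatures are compatible with those of the $P_i$. A weight and degree comparison, combined with the harmonic-form decomposition of $\bigwedge\mathfrak{k}^*$ under the Kostant Laplacian, should force all coefficients in the relation to vanish. With independence established, the Kostant dimension count closes the proof and simultaneously certifies that the chosen $\omega_i$ generate $B$ over $A$.
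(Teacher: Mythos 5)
A preliminary remark: the paper does not prove Theorem \ref{inspiration}; it is imported verbatim from \cite{PDP} and used as an input to Theorem \ref{mainthe}, so the relevant comparison is with the argument of \cite{PDP}, whose shape is visible in this paper's treatment of the matrix cases (Theorems \ref{tony}, \ref{tony3}) and of $E_6/F_4$. Your skeleton --- exhibit $2r$ candidates, count dimensions via Kostant, prove $A_{r-1}$-independence --- is the right one, and pillar (ii) is sound: the multiplicity of the adjoint representation in $\bigwedge\mathfrak{k}$ is indeed $r\cdot 2^r=2r\cdot\dim A_{r-1}$, so independence of a correct candidate set would finish the proof.

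The other two pillars have genuine gaps. First, the candidates. Iterated bracket powers of $X$ do not produce the required covariants: the full antisymmetrization of $(a,b,c)\mapsto[[a,b],c]$ vanishes by the Jacobi identity, so $X^{\wedge 3}=0$ in the bracket product and in particular no odd-degree covariant beyond $X$ itself arises this way; the true generators (the $u_{2m_i-1}$ and $f_{2m_i}$ of \cite{PDP}, recalled in the $E_6/F_4$ section of this paper) live in degrees $2m_i-1$ \emph{and} $2m_i$ and are built by transgression/contraction from the $P_i$, which is the correct instinct in your parenthetical remark but is not what "iterated brackets" delivers. Moreover your proposed basis $\left\{\omega_i,\,P_r\wedge\omega_i\right\}$ has the wrong degree pattern: already for $\mathfrak{k}=\mathfrak{sl}_2$ (so $r=1$, $m_1=1$, $A_{r-1}=\mathbb{C}$) the element $P_1\wedge\omega_1$ would have degree $5>3=\dim\mathfrak{k}$ and hence vanishes, while the actual basis is $\{X,[X,X]\}$ in degrees $1,2$; so you would be proving independence of a set that is not even linearly independent. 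Second, the independence mechanism. There is no Chevalley-type restriction theorem for exterior invariants: the restriction $\left(\bigwedge\mathfrak{k}^*\right)^{\mathfrak{k}}\to\bigwedge\mathfrak{h}^*$ annihilates every primitive generator (e.g. the degree-$3$ invariant $(x,y,z)\mapsto([x,y],z)$ dies because $\mathfrak{h}$ is abelian), so nothing can be detected on a Cartan subalgebra. The device that actually works --- and that this paper imitates with "multiply by a power of $X$ and take traces" in the matrix cases --- is the $A$-valued Killing-form pairing $B\times B\to A$, under which the $u$'s and $f$'s pair onto nonzero multiples of top-degree primitive invariants (compare the identity $(u_7,f_{10})_{\g}=p_{17}$ used in the exceptional case); a putative $A_{r-1}$-relation is then killed by pairing against suitable basis elements and using that wedging with $P_r$ is injective on $A_{r-1}$.
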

For the case $\g$ simple we have that from classical results (see \cite{Helgason}, \cite{Bo2}, \cite{Tak}) the only four pairs $(\g,\sigma)$ such that $\left(\bigwedge\mathfrak{p}^*\right)^{\mathfrak{k}}$ is an exterior algebra are:
\begin{enumerate}
\item[1.] $(\mathfrak{sl}(2n),-s)$, where $s$ is the symplectic transposition and $\mathfrak{k}=\mathfrak{sp}(2n)$,
\item[2.] $(\mathfrak{sl}(2n+1),-t)$, where $t$ is the usual transposition and $\mathfrak{k}=\mathfrak{so}(2n+1)$, 
\item[3.] $(\mathfrak{so}(2n),\sigma_1)$, where $\sigma_1$ is an involution such that $\mathfrak{k}=\mathfrak{so}(2n-1)$,
\item[4.] $(\mathfrak{e}_6,\sigma_2)$, where $\sigma_2$ is an involution such that $\mathfrak{k}=\mathfrak{f}_4$. 
\end{enumerate}

In this paper we study the covariants in these cases so that our results plus Theorem \ref{inspiration} will give the following:

\begin{teorem}\label{mainthe}
Let $\g$ be a complex finite dimensional semisimple Lie algebra. Let $\sigma:\g\rightarrow\g$ be an indecomposable involution such that $\left(\bigwedge \mathfrak{p}^*\right)^{\mathfrak{k}}$ is an exterior algebra of type $\wedge (x_1,...,x_r)$, where the $x_i$'s are ordered by their degree and $r$ is such that $r:=rk(\mathfrak{g})-rk(\mathfrak{k})$. We have that the algebra $B:=\left(\bigwedge \mathfrak{p}^*\otimes \mathfrak{g}\right)^\mathfrak{k}$ is a free module of rank $4r$ on the subalgebra of $\left(\bigwedge \mathfrak{p}^*\right)^\mathfrak{k}$ generated by the elements $x_1,...,x_{r-1}$. 
%Further we can describe the degrees of the genarators as follows: we know that the exponents of $\mathfrak{k}$ are a subset of the exponents of $\mathfrak{u}$, now we consider the exponents $m_i$ of $\mathfrak{u}$ which don't appear among those of $\mathfrak{k}$. So the generators have degree
\end{teorem}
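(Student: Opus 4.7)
The plan is to verify the theorem case by case, using the classification invoked immediately before the statement. Under the standing assumption that $\left(\bigwedge\mathfrak{p}^*\right)^{\mathfrak{k}}$ is an exterior algebra, the indecomposable involution $\sigma$ falls into one of two regimes: either $\g$ is not simple, in which case $\g=\mathfrak{k}\oplus\mathfrak{k}$ with $\sigma$ the flip, or $\g$ is simple and $(\g,\sigma)$ is one of the four pairs listed. I treat the flip case first, then the two classical symplectic/odd-orthogonal cases, then the two remaining simple pairs.

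For the flip case, $\mathfrak{p}$ is the antidiagonal copy of $\mathfrak{k}$, so $\left(\bigwedge\mathfrak{p}^*\right)^{\mathfrak{k}}=\left(\bigwedge\mathfrak{k}^*\right)^{\mathfrak{k}}$ and, as $\mathfrak{k}$-modules, $\g=\mathfrak{k}\oplus\mathfrak{p}\cong\mathfrak{k}\oplus\mathfrak{k}$. Consequently
\[
B=\left(\bigwedge\mathfrak{p}^*\otimes\g\right)^{\mathfrak{k}}\cong\left(\bigwedge\mathfrak{k}^*\otimes\mathfrak{k}\right)^{\mathfrak{k}}\oplus\left(\bigwedge\mathfrak{k}^*\otimes\mathfrak{k}\right)^{\mathfrak{k}},
\]
and by Theorem \ref{inspiration} each summand is free of rank $2r$ over the same subalgebra $A_{r-1}$, giving rank $4r$ as required.

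For cases 1 and 2, namely $(\mathfrak{sl}(2n),\mathfrak{sp}(2n))$ and $(\mathfrak{sl}(2n+1),\mathfrak{so}(2n+1))$, I would follow the Procesi-type strategy that proves Theorem \ref{Procesicov}, transplanted to the symmetric-pair setting $\mathfrak{p}=M_n^{\pm}$. The first ingredient is an explicit presentation of $(\bigwedge\mathfrak{p}^*)^{\mathfrak{k}}$ as an exterior algebra on traces of standard polynomials of the correct parities of degree; this is the content of Section 2 and relies on the classical invariant theory of $\sigma$-symmetric matrices. The second ingredient is the tautological inclusion $X:\mathfrak{p}\hookrightarrow\g$, viewed as an element of $B$, together with the surjective graded homomorphism $\pi:A[t]\to B$ sending $t\mapsto X$; surjectivity follows from standard polarization arguments. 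The third ingredient is a Kostant-style dimension count for $B$, which fixes the rank. Finally, the Amitsur-Levitzki-type identities of Rowen and Hutchinson recalled in the introduction provide the relation generating $\ker\pi$; quotienting $A[t]$ by this single relation and using the fact that $A$ is itself exterior, one reads off a free $A_{r-1}$-basis consisting of $4r$ monomials.

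Case 3 is the pair $(\mathfrak{so}(2n),\mathfrak{so}(2n-1))$, where $r=1$, so $A_{r-1}=\mathbb{C}$ and the claim reduces to showing that $\dim B=4$. Since $\mathfrak{p}$ is the defining vector representation of $\mathfrak{so}(2n-1)$ (an odd-dimensional sphere), this can be verified by a direct $\mathfrak{k}$-isotypic decomposition of $\bigwedge^{\bullet}\mathfrak{p}^*\otimes(\mathfrak{k}\oplus\mathfrak{p})$. Case 4, the exceptional pair $(\mathfrak{e}_6,\mathfrak{f}_4)$, has $r=2$ and $\mathfrak{p}\cong V_{26}$; here there is no matrix model to support the standard-polynomial strategy of cases 1--2. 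The plan is to decompose $\bigwedge^{\bullet}V_{26}^*\otimes(\mathfrak{f}_4\oplus V_{26})$ into $\mathfrak{f}_4$-isotypic components using \textsc{LiE}, and match the resulting Hilbert series against that of a free $A_1$-module of rank $8$ on the appropriate generators. The main obstacle is precisely this last case: the general conceptual tools available for the classical families are unavailable, and one must rely on case-specific branching computations. Controlling the exceptional case with a transparent argument, rather than a brute-force verification, is where the real work lies.
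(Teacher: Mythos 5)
Your case-by-case architecture is exactly the paper's: the flip case via Theorem \ref{inspiration}, the two classical pairs via the Procesi-style machinery of Section 2, and ad hoc treatments of $(\so(2n),\so(2n-1))$ and $(\mathfrak e_6,\mathfrak f_4)$. Your handling of the flip case and of the rank-one sphere case matches the paper. In the classical cases, though, you have the logic of the key relation backwards: the paper does \emph{not} feed Rowen's and Hutchinson's identities into the determination of $\ker\pi$. The generator of the kernel is produced internally --- one expands $T_{n-1}$ in the basis $1,X,\dots,X^{4n-3}$, pins down its coefficients by multiplying by powers of $X$ and taking traces, and then matches dimensions against the plethysm count of Theorem \ref{ladim}; the Rowen and Hutchinson vanishing theorems are then \emph{deduced} from that generator by the left/right multiplication-by-$t$ trick. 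Taking them as inputs is harmless (they are true theorems) but would still not tell you that the kernel is exactly that principal ideal, so the dimension count cannot be bypassed. You also gloss over the fact that Section 2 works with $\gl$ and the full spaces $M^{\pm}$, whereas the theorem concerns $\sll$ and the traceless part of $\mathfrak p$; the paper's Conclusions section makes the adjustment explicitly (replacing $X^i$ by $X^i-\tfrac1m\Tr(X^i)$, noting $T_0=0$ on traceless matrices, and redoing the dimension counts).

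The genuine gap is in the exceptional case, and it is precisely where you admit the difficulty lies. Matching the \textsc{LiE}-computed Hilbert series of $B^{\pm}$ against that of a free $\wedge(r_9)$-module of rank $8$ only shows the graded dimensions are \emph{consistent} with freeness; it does not exclude, say, $r_9\cdot g_2=0$ with the degree-$11$ covariant arising from elsewhere. The paper's missing idea is to start from the known free generators $u_i,f_i$ of $\left(\bigwedge\g^*\otimes\g\right)^{\g}$ from \cite{PDP}, project them by $\pi_\pm$ to candidate generators $g_i,v_i$ of $B^{\pm}$, and then exploit the nondegenerate invariant forms obtained by restricting the Killing form to $\mathfrak u$ and $\mathfrak p$, together with the known duality $(u_7,f_{10})_{\g}=p_{17}$, to compute $(v_7,g_{10})=r_{17}\neq0$ and hence $(r_9v_7,g_{10})=r_9r_{17}\neq0$. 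It is this pairing computation (plus the vanishing of the complementary projections read off from the two \textsc{LiE} tables) that certifies the linear independence of the eight monomials. Without some such nondegeneracy argument, your plan does not close the $E_6/F_4$ case.
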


\section{Invariant theory in exterior algebras for the symplectic and orthogonal groups}
\subsection{Symplectic case}

We denote, for all $k\in \mathbb{N}$, by $1_k$ the identity matrix of order $k$ and by $A^t$ the usual transposition of a matrix $A$.

We consider the skew-symmetric matrix  $J:=\begin{pmatrix}0&1_n\\ -1_n&0
\end{pmatrix}$.

On the space $V:=\mathbb C^{2n}$  we have the skew-symmetric form  $(u,v):=u^tJv$, where $u$ and $v$ are column vectors in $\mathbb C^{2n}$.

The symplectic transposition, $A\mapsto A^s$, is  defined by
$$ A^s:=- JA^tJ.$$
Explicitly we have that, if $M=\begin{pmatrix}A&B\\ C&D
\end{pmatrix}$ with $A,B,C,D$ $n\times n$,  its symplectic transpose is
\begin{equation}
M^s=\begin{pmatrix}D^t&-B^t\\ -C^t&A^t
\end{pmatrix}.
\end{equation}

Let $M_{2n}^+=\{A\,|\, A^s=A\}$ be the space of symmetric matrices with respect to the symplectic transposition. Notice that the map  $A\mapsto AJ$ gives a linear isomorphism of the space $M_{2n}^+$ onto the space $\bigwedge^2V$ of skew-symmetric matrices (with respect to the usual transposition). The group $GL(2n,\mathbb C)=GL(V)$ acts on $\bigwedge^2V$  by $ X\circ A:=XAX^t$, for $X\in GL(V)$   and  $A\in \bigwedge^2V$.

Let $G\subset GL(V)$ be the symplectic group. By definition $G$ is the group of transformations preserving the symplectic form, that is
$$G:=\{X\in GL(V)\,|\,  J= X^tJX\}. $$ 
This is equivalent to require that $ X^{-1}=  X^s$. So we can see the symplectic group as the fixed point set of the involution of $GL(V)$ given by $X\mapsto (X^s)^{-1}$. 

The Lie algebra $M_{2n}^-$ of $G$ is the space of the skew-symmetric matrices, with respect to the symplectic form, $M_{2n}^- := \{A |\ A^s =-A\}$. The map $A\mapsto AJ$ gives a linear isomorphism of $M_{2n}^-$  onto the space $S^2(V)$ of symmetric matrices (with respect to the usual transposition). Also in this case $GL(V)$ acts on $S^2(V)$ by 
$ X\circ A:=XAX^t$, for $X\in GL(V)$   and  $A\in S^2V$.

The conjugation action of $G$  commutes with the map $A\mapsto AJ$. In fact if $X\in G$  we have $XJX^{-1}=J$, so we can state that 
\begin{proposition}\label{equivrep}
  The action of the symplectic group $G$ on $M_{2n}^+$ (resp. on $M_{2n}^-$) can be identified with the restriction to $G$ of the usual action of the linear group $GL(V)$ on $\bigwedge^2V$ (resp. on $S^2(V)$).\end{proposition}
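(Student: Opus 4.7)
The plan is straightforward: the text has already recorded that $A \mapsto AJ$ is a $\mathbb{C}$-linear isomorphism from $M_{2n}^+$ onto $\bigwedge^2 V$ and from $M_{2n}^-$ onto $S^2V$, so what remains is to verify $G$-equivariance for both cases. Equivariance means that for every $X \in G$ and every $A \in M_{2n}^\pm$, the two images
\[
(XAX^{-1})\,J \qquad \text{and} \qquad X\,(AJ)\,X^t
\]
coincide, where on the left we have the conjugation action of $G$ composed with the isomorphism, and on the right we have the isomorphism composed with the restricted $GL(V)$-action $B \mapsto XBX^t$.

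The equality of these two expressions is equivalent to the identity $X^{-1}J = JX^t$, or equivalently $XJX^t = J$. This is a direct consequence of the defining relation of the symplectic group: starting from $X^t J X = J$, multiply on the left by $(X^t)^{-1}$ and on the right by $X^{-1}$ to obtain $J = (X^t)^{-1} J X^{-1}$, then invert both sides and use $J^{-1} = -J$ to get $XJX^t = J$. Substituting this identity into the right-hand expression above immediately yields $X(AJ)X^t = XA(JX^t) = XA(X^{-1}J) = (XAX^{-1})J$, which is exactly what we need.

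Both assertions of the proposition follow simultaneously from the same calculation: the sign of $A^s$ plays no role in the equivariance computation, only in checking that $A \mapsto AJ$ sends the $\pm 1$ eigenspace of the symplectic involution to the correct target (which is part of what has already been established before the proposition). There is no real obstacle; the content is simply the observation that multiplication by $J$ provides the explicit intertwiner between conjugation by $X \in G$ on $M_{2n}^\pm$ and the bilinear transformation $B \mapsto XBX^t$ on $\bigwedge^2 V$ or $S^2 V$.
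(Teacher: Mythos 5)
Your proof is correct and follows essentially the same route as the paper: both reduce equivariance of $A\mapsto AJ$ to an identity expressing that $X\in G$ preserves $J$, derived from the defining relation $X^tJX=J$. If anything, your version is cleaner, since the identity actually needed is $XJX^t=J$ (equivalently $X^{-1}J=JX^t$), which you derive correctly, whereas the paper's one-line justification states it as $XJX^{-1}=J$, which as written would assert that $X$ commutes with $J$.
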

Remark that with respect to the action of $G$, while the Lie algebra $L:=M_{2n}^-\simeq S^2(V)$ is irreducible,  $M_{2n}^+$ is not irreducible. Indeed it decomposes as the direct sum of the one dimensional space of scalar matrices and of the space  $P_0$ of traceless matrices.  Under the isomorphism with $\bigwedge^2V$ the space of scalar matrices maps to the space spanned by  $J$.
\smallskip

\subsection{Invariants of the representation $\bigwedge (M_{2n}^{\pm})^*\otimes M_{2n}$}
\subsubsection{Dimension}

Here and below we index the irreducible representations of $GL(V)$ by Young diagrams with at most $n$ columns (the row of length $k$ corresponds to $\bigwedge ^kV$). The irreducible module corresponding to the diagram $\lambda$ will be denoted by $S_{\lambda}(V).$

Let us start from the case $M_{2n}^+\simeq \bigwedge^2 V$. By the {\em Plethysm formulas} (see \cite{MD}) we know how to decompose $\bigwedge[\bigwedge^2V]$ with respect to the action of the linear group $GL(V)$.

We have that $\bigwedge[\bigwedge^2V]$ is the direct sum of the irreducible representations $$H_{a_1,a_2,\ldots,a_k}^-(V)=H_{\underline a}^-(V):=S_{\lambda(\underline a)}(V),$$
 with $2n>a_1>a_2  \ldots >a_k> 0$. Notice that $H^-_\emptyset$ is the trivial one dimensional representation.

 The Young diagram  $\lambda(\underline a)$ is built by nesting   the  {\em hook} diagrams $h_{a_i}$  whose column is of length $a_i$ and whose row is of the length $a_i+1$,.
As an example, the diagram $\lambda(4,3,1)$ is
$$
\begin{Young}
      &&&&\cr
     &&&& \cr
     &&& \cr
     &\cr
\end{Young}$$
%\end{document}

Using this, we can compute the dimension of the invariants for the symplectic group $G$ in $\bigwedge M_{2n}^+$.
We know that, for each diagram $\lambda$, dim$(S_\lambda(V))^{G})\leq 1$ and dim$ (S_{\lambda}(V)^{G})=1$ if and only if every row of $\lambda$ is even, (see \cite{Pr2}).

The rows of a representation $H_{\underline a}^-(V)$  are even if and only if  $\underline a=b_1,b_1-1,b_2,b_2-1,\ldots ,b_s,b_s-1,\ldots$, or
$\underline a=b_1,b_1-1,b_2,b_2-1,\ldots ,b_{s},1$ with the $b_j$'s odd and $b_s>1$. 
So we have to compute the number of decreasing sequences of odd numbers smaller than $2n$. This is the same to compute the number of decreasing sequences of  numbers taken from $1,...,n$, that is the number of subsets of $\{1,\ldots ,n\}.$ Thus
\begin{proposition}\label{pep}
The dimension of the space of invariants $(\bigwedge M_{2n}^+)^G$ is $2^n$.
\end{proposition}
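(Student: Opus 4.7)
The plan is to reduce the dimension count to a combinatorial enumeration of Young diagrams by combining the plethysm decomposition discussed above with the classical criterion for symplectic invariants in irreducible $GL(V)$-modules. Proposition~2.2 identifies $M_{2n}^+$ with $\bigwedge^2 V$ as a $G$-module, so $(\bigwedge M_{2n}^+)^G$ can be computed inside the larger $GL(V)$-module $\bigwedge[\bigwedge^2 V]$. The plethysm formula already quoted gives
$$
\bigwedge[\bigwedge^2 V] \;=\; \bigoplus_{\underline a} S_{\lambda(\underline a)}(V),
$$
indexed by strictly decreasing sequences $2n > a_1 > \cdots > a_k > 0$, with $\lambda(\underline a)$ the nested-hook diagram whose first $k$ row lengths are $\lambda_i = a_i + i$ and whose first $k$ column lengths are $\lambda'_j = a_j + j - 1$.

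I would then invoke the classical criterion quoted in the excerpt: $\dim (S_\lambda V)^G \le 1$, with equality if and only if every row of $\lambda$ has even length. The dimension in question therefore equals the number of admissible sequences $\underline a$ whose diagram $\lambda(\underline a)$ has all row lengths even.

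The core combinatorial step is then to show this count equals $2^n$. The parity condition $\lambda_i = a_i + i$ even for $i \le k$ forces $a_i \equiv i \pmod 2$. A careful inspection of how the legs $a_j+j-1$ of successive hooks cascade to determine the row lengths below row $k$ shows that the remaining rows are all even precisely when the $a_i$'s cluster into consecutive decreasing pairs $(b, b-1)$ with $b$ odd and $b \ge 3$, possibly together with a trailing unpaired $1$ (corresponding to the ``pair'' $(1,0)$ truncated, since $0$ is excluded from the sequence). Such configurations are in bijection with subsets of the odd integers $\{1, 3, \dots, 2n-1\}$: each odd $b > 1$ contributes the pair $(b, b-1)$, and the element $1$ (if present) contributes the trailing singleton. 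Since this index set has $n$ elements, there are $2^n$ such subsets, giving the claimed dimension.

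The principal obstacle is the combinatorial classification in the last step, namely verifying both the necessity of the pairing structure (by tracking how the leg lengths $a_j + j - 1$ mesh to determine the row lengths below the $k$-th diagonal of $\lambda(\underline a)$) and its sufficiency (checking that every admissible union of such pairs really does produce a partition with only even row lengths). Everything else in the proof is a direct appeal to theorems already stated in the excerpt.
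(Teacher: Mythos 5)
Your proposal is correct and follows essentially the same route as the paper: reduce to $\bigwedge[\bigwedge^2V]$ via Proposition \ref{equivrep}, apply the plethysm decomposition into nested-hook modules $S_{\lambda(\underline a)}(V)$, invoke the even-row criterion for symplectic invariants, and observe that the admissible sequences are exactly the unions of pairs $(b,b-1)$ with $b$ odd (plus an optional trailing $1$), which are counted by subsets of an $n$-element set. The paper likewise states the characterization of the admissible sequences without a detailed verification, so your level of rigor at the ``principal obstacle'' matches the original.
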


The case $M_{2n}^-$ is quite similar. We set $L:=M_{2n}^-\simeq S^2V$.

\begin{remark}
Remark that $L$ is the Lie algebra of type $C_n$ and in this part of the work we will recover general results for covariants in the exterior algebra of a Lie algebra $\mathfrak{g}$ as in $\cite{PDP}$.  
\end{remark}

By the {\em Plethism formulas} (see \cite{MD}) we know how to decompose $\bigwedge[S^2V]$ with respect to the action of the linear group $GL(V)$.

We have that $\bigwedge[S^2V]$ is the direct sum of the irreducible representations
\begin{equation}\label{decsim}
H_{a_1,a_2,\ldots,a_k}^+(V)=H_{\underline a}^+(V)=S_{\lambda(\underline a)}(V),\end{equation}
 with $2n>a_1>a_2  \ldots >a_k\geq 0$.

This time however the Young diagram  $\lambda(\underline a)$ is built by nesting   the  {\em hook} diagrams $h_{a_i}$  whose column is of length $a_i+2$ and whose row is of the length $a_i+1$.

Now (analogous to the previous case) we want to compute the dimension of the invariants for the symplectic group.

We have that the rows of a representation $H_{\underline a}^+(V)$ are even if and only if the sequence is of the type $a,a-1,b,b-1,c,c-1,\ldots$, with $a,b,c,...$ odd. 
So as before we can state: 
\begin{proposition}
The dimension of the space of invariants $(\bigwedge M_{2n}^-)^G$ is $2^{n}$.
\end{proposition}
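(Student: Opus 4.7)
The plan is to parallel the argument that was just given for $(\bigwedge M_{2n}^+)^G$, now using the symmetric decomposition \eqref{decsim} of $\bigwedge[S^2V]$. The general strategy has three steps: (i) use plethysm to decompose $\bigwedge[S^2V]$ as a $GL(V)$-module; (ii) apply the standard fact that $\dim (S_\lambda(V))^G$ is $1$ if all rows of $\lambda$ are even and $0$ otherwise; (iii) count the strictly decreasing sequences $\underline{a} = (a_1 > a_2 > \cdots > a_k \geq 0)$ whose associated nested-hook diagram $\lambda(\underline{a})$ has all rows of even length.

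Step (i) is already recorded in \eqref{decsim}. For step (ii), since the $G$-invariants in $\bigwedge M_{2n}^- \cong S^2V$ pick up exactly one copy from each $H_{\underline{a}}^+(V)$ whose Young diagram has only even rows, the problem reduces to the combinatorial step (iii). First I would make explicit the row lengths of $\lambda(\underline a)$. The hook $h_{a_i}$ has row length $a_i+1$ and column length $a_i+2$, so when the hooks are nested, row $j$ of $\lambda(\underline a)$ is supported by the rows of all $h_{a_i}$ with $i \leq j$ together with the columns of the $h_{a_i}$ with $i<j$ that still reach row $j$. A direct bookkeeping argument then shows:
\begin{itemize}
\item rows $2i-1$ and $2i$ (for $i = 1,\dots,k$) each have length $a_i + i + (\text{number of previous columns still alive})$;
\item the remaining rows below row $2k$ are filled only by the surviving columns of the outer hooks.
\end{itemize}
A case check shows that all these row lengths are even if and only if the sequence is built from consecutive pairs: $\underline a = (b_1, b_1-1, b_2, b_2-1, \ldots, b_s, b_s-1)$ with $b_1 > b_2 > \cdots > b_s$ all odd and $b_1 < 2n$. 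This is the content of the claim following \eqref{decsim}.

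For step (iii), having established the above characterization, the counting is immediate: valid sequences correspond bijectively to subsets of the set of odd positive integers less than $2n$, namely $\{1,3,5,\ldots,2n-1\}$, which has $n$ elements. Hence the number of sequences is $2^n$, giving $\dim(\bigwedge M_{2n}^-)^G = 2^n$.

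The only genuinely non-routine point is the combinatorial characterization of admissible sequences; the main obstacle is checking that nesting really produces the predicted row lengths and that the parity constraint is equivalent to the stated pairing pattern. The argument is entirely parallel to the skew-symmetric case (Proposition \ref{pep}), the only difference being that the column of $h_{a_i}$ is now length $a_i+2$ rather than $a_i$, which slightly shifts the parities but leaves the count of admissible subsets unchanged at $2^n$.
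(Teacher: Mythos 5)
Your proposal is correct and follows essentially the same route as the paper: the plethysm decomposition \eqref{decsim}, the criterion that $S_{\lambda}(V)^G\neq 0$ exactly when all rows of $\lambda$ are even, the characterization of admissible sequences as consecutive pairs $b,b-1$ with $b$ odd, and the resulting bijection with subsets of $\{1,3,\dots,2n-1\}$ giving $2^n$. The paper itself leaves the row-length bookkeeping implicit ("analogous to the previous case"), so your added detail only makes explicit what the paper takes for granted.
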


 We now pass to determine the dimension of the space  $B:=(\bigwedge M_{2n}^{\pm}\otimes M_{2n})^G$.
 Before we proceed, let us make a few remarks. We know that to study an isotypic component relative to an irreducible representation $N$ in a representation $M$ of a reductive group $G$, we need to study the space $\hom_G(N,M)$ that we can identify to $(N^*\otimes M)^G$. The dimension of this space will be the number of copies of $N$ in $M$.  
 
 We decompose the space of matrices $M_{2n}=M_{2n}^+\oplus M_{2n}^-=\mathbb C\oplus P_0 \oplus M_{2n}^-$.
 Thus we have $$\left(\bigwedge M_{2n}^{\pm}\otimes M_{2n}\right)^G=\left(\bigwedge M_{2n}^{\pm}\otimes \mathbb C\right)^G\oplus \left(\bigwedge M_{2n}^{\pm}\otimes P_0\right)^G\oplus \left(\bigwedge M_{2n}^{\pm}\otimes M_{2n}^-\right)^G$$
 The space $(\bigwedge M_{2n}^{\pm}\otimes \mathbb C)^G$ is the space of invariants whose dimension we have already computed. Since $P_0$ and $M_{2n}^-$ are both irreducible and self dual, our computation will give us information on the isotypic components relative to these two representations.

 Now identify, as a representation of $G$,   $M_{2n}\simeq V\otimes V$.  We thus have to study the $G$-invariants in  
 $$\bigwedge (M_{2n}^{\pm})\otimes V\otimes V=\oplus_{\underline a} H_{\underline a}^{\mp}(V)\otimes V\otimes V.$$
 
 By Pieri's formulas we know how to decompose $S_\lambda(V)\otimes V\otimes V$. First we decompose $S_\lambda(V)\otimes V$. We have that $S_\lambda(V)\otimes V=\oplus S_{\lambda_i}(V) $ where $\lambda_i$ runs along the diagrams whose first row is of length at most $2n$ and which are obtained  from  $\lambda$ by adding one box.
 %$$\lambda=\Yvcentermath1
%  \yng(3,2,2,1)$$
So, iterating the same process, to compute the dimension of the invariants we have to study when, adding two squares, starting from one of the diagrams $H^-$ we obtain a diagram with even rows.
\begin{teorem}\label{ladim}
The dimension of $B$ is $(2n-1)2^n$ (resp. $(2n)2^n$) in the case $M_{2n}^+$ (resp. $M_{2n}^-$).
\end{teorem}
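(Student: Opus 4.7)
My proof plan is to use Pieri's rule to enumerate, for each plethysm summand $H^\mp_{\underline a}(V)$, the $GL(V)$-irreducibles $S_\nu(V)$ appearing in $S_{\lambda(\underline a)}(V)\otimes V\otimes V$, and to retain only those $\nu$'s supporting a $G$-invariant. Using $M_{2n}\simeq V\otimes V$ as a $G$-module and the plethysm decomposition $\bigwedge M_{2n}^{\pm}=\bigoplus_{\underline a} H^\mp_{\underline a}(V)$, one writes
$$
\dim B \;=\; \sum_{\underline a}\,\dim\bigl(S_{\lambda(\underline a)}(V)\otimes V\otimes V\bigr)^G.
$$
A double application of Pieri shows that the multiplicity of $S_\nu(V)$ in $S_{\lambda(\underline a)}(V)\otimes V\otimes V$ equals the number of standard Young tableaux of shape $\nu/\lambda(\underline a)$, which is $1$ when the two added boxes lie in the same row or the same column and $2$ otherwise. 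Combined with the criterion $\dim S_\nu(V)^G = 1$ iff every row of $\nu$ is even, the computation reduces to a weighted count of pairs $(\underline a,\nu)$.

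This count splits naturally according to the parity of the rows of $\lambda(\underline a)$. If $\lambda(\underline a)$ already has all rows even (equivalently $\underline a=(b_1,b_1-1,b_2,b_2-1,\dots)$ with odd $b_i$, of which there are $2^n$ by Proposition \ref{pep}), then an admissible $\nu$ must add both new boxes to a single row of $\lambda(\underline a)$, contributing multiplicity $1$ each. If $\lambda(\underline a)$ has exactly two odd rows, then $\nu$ must add one box to each of them, with multiplicity $1$ or $2$ according as the two odd rows have equal or distinct lengths. Any other $\underline a$ contributes zero. Thus
$$
\dim B \;=\; \sum_{\underline a:\ \lambda(\underline a)\text{ all even}} \#\{\text{rows admitting a two-box extension}\} \;+\; \sum_{\underline a:\ \lambda(\underline a)\text{ has }2\text{ odd rows}} m(\underline a),
$$
where $m(\underline a)\in\{1,2\}$ is the corresponding Pieri multiplicity.

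The final step is the explicit evaluation of these two sums. I would parametrize the even-row $\underline a$'s by subsets $S\subseteq\{1,\dots,n\}$, use the nested-hook description of $\lambda(\underline a(S))$ to read off its row lengths, and row by row check when a two-box extension of row $i$ produces a valid partition (that is, $\lambda_i+2\leq\lambda_{i-1}$ and the first row still has length at most $2n$). For the two-odd-row case, I would similarly enumerate which $\underline a$'s produce exactly two odd rows in $\lambda(\underline a)$, using the formula $\lambda_i=a_i+i$ for $i\leq k$ and the corresponding expression for $i>k$, and record the multiplicity $m(\underline a)$ in each instance. The main obstacle is the combinatorial bookkeeping needed to show that the total weighted count telescopes to precisely $(2n-1)\,2^n$ in the $M_{2n}^+$ case and $2n\cdot 2^n$ in the $M_{2n}^-$ case. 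The two situations are treated in parallel; only the hook data differs (column $a_i$, row $a_i+1$ versus column $a_i+2$, row $a_i+1$), and this small change accounts for the extra $2^n$ in the $M_{2n}^-$ answer.
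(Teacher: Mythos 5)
Your reduction of the problem is sound and matches the framework the paper itself sets up just before the theorem: decompose $\bigwedge M_{2n}^{\pm}$ into the $H^{\mp}_{\underline a}(V)$, tensor with $V\otimes V$, apply Pieri twice, and keep only the resulting $S_\nu(V)$ with all rows even. Your multiplicity observation (the number of standard tableaux of the two-box skew shape, hence $1$ for same row/column and $2$ otherwise) and your trichotomy on the parity profile of $\lambda(\underline a)$ (all rows even, exactly two odd rows, otherwise zero contribution) are all correct and are implicitly what the paper uses.

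The genuine gap is that you stop exactly where the proof begins. The entire content of the theorem is the evaluation of the weighted count, and you only describe a plan for it (``I would parametrize\dots'', ``I would similarly enumerate\dots''), explicitly flagging the bookkeeping as ``the main obstacle'' without resolving it. Nothing in your write-up produces the numbers $(2n-1)2^n$ and $2n\cdot 2^n$, so the statement is not established. For comparison, the paper carries out this count by induction on $n$, splitting into four cases according to the size of $a_1$ relative to $2n-1$ and $2n-2$ (and, within those, the size of $a_2$): the case $a_1\le 2n-3$ feeds the inductive hypothesis plus a boundary correction of $2^{n-2}$; the cases $a_1=2n-1$ and $a_1=2n-2$ are handled by hand, with the factor of $2$ appearing exactly where your ``distinct rows and columns'' multiplicity predicts; the contributions sum to $(2n-1)2^n$. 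If you want to complete your version, you must either run such an induction or actually close the subset parametrization you sketch; as it stands the proposal is a correct setup with the decisive computation missing.
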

\begin{proof}
We prove only the case $M_{2n}^+$. The proof of the case $M_{2n}^-$ is similar.

We proceed by induction on $n$. The case $n=1$ is trivial so we assume $n>1$.

We have to compute the sequences of type:
$$2n>a_1>a_2>...>a_k>0,$$
such that when we add two squares to the corresponding diagram  we obtain a diagram with even rows.

We can have four different situations.\\
Let us start by considering the case $a_1<2(n-1)=2n-2$. By induction the number of diagrams with even rows whose first row has at most $2(n-1)$ boxes is $(2(n-1)-1)2^{n-1}$. However if $a_1=2n-3$ we can add two squares to the first row so that its length is ($a_1+3=2n$).  Thus we have a contribution from each sequence $\underline a=2n-3>2n-4>b_2>b_2-1>\ldots >b_s-1$ or $\underline a=2n-3>2n-4>b_2>b_2-1>\ldots >b_s-1>1$ with the $b_j$'s odd and $b_s>1$. Reasoning exactly as in the proof of Proposition  \ref{pep} we see that there are exactly  $2^{n-2}$ such diagrams. 

The second case is 
$$a_1=2n-1\ \mbox{and}\ a_2=2n-2,$$
In this case we have two possibilities. We can  add two squares at the bottom
$$
\begin{Young}
      &&&\cr
     &&&\cr
& \cr
     X&X \cr
\end{Young}$$
and we get a contribution from each sequence $\underline a=2n-3>2n-4>b_2>b_2-1>\ldots >b_s-1$ or $\underline a=2n-1>2n-2>b_2>b_2-1>\ldots >b_s-1>1$ with the $b_j$'s odd and $b_s>1$ and there are
 $2^{n-1}$ such sequences. 
 
Otherwise we add  the two squares to the diagram associated to the sequence $2(n-1)>a_3>a_4>...>a_k>0,$ so that, by  induction on $n$, we get $(2(n-1)-1)2^{n-1}$ contributions.

The third case is: 
$$a_1=2n-1\ \mbox{and}\ a_2<2n-2,$$
in this case the only way to add two squares is to add one on the second row and one to the second column
$$\begin{Young}
      &&&\cr
     &&&X\cr
& \cr
     &X \cr
\end{Young}$$
so we would get  $2^{n-2}$ contributions, but we can exchange the order in which we add the squares, so we get  $2\cdot 2^{n-2}$ contributions.

In the  case we  have $a_1=2n-2$, so that in order to get a diagram with even rows, we need to add a box to the first row. If $a_2<2n-4$, there are at least two rows of length $1$ so that we cannot obtain a diagram with even rows by adding a single box.
If $a_2=2n-3$ we need to add the second square to the second row 
$$\begin{Young}
      &&&&X\cr
     &&&&X\cr
& \cr
     & \cr
\end{Young}$$ 
and one easily sees that we get  $2^{n-2}$ contributions. 

If $a_2=2n-4$ we need  to add a box to the second column:
$$\begin{Young}
      &&&&&X\cr
     &&&\cr
& \cr
     & \cr
&X\cr
\end{Young}$$ 
and in this case we  get  $2\cdot 2^{n-2}$\ contributions, since we can exchange the order in which the boxes are inserted.

Finally  adding  all the contributions we obtain 
$$(2(n-1)-1)2^{n-1}+2^{n-2}+2^{n-1}+(2(n-1)-1)2^{n-1}+2^{n-1}+2^{n-2}+2^{n-1}=(2n-1)\cdot 2^n.$$
This is our claim.
\end{proof}

\subsubsection{Structure of algebras of invariants of the representation $\bigwedge (M_{2n}^+)^*\otimes M_{2n}$}

Let $M_{2n}$ be, as in the previous section, the space of complex matrices $2n\times 2n$ and let $M_{2n}^+:=\{x\in M_{2n}|x=x^s\}$ be the symmetric part with respect to the symplectic involution.  
Since both $ \bigwedge^*(M_{2n}^+)^*$ and $\bigwedge^*(M_{2n}^+)^*\otimes M_{2n}$ have a structure of graded associative algebras, also the algebras $$A:=\left(\bigwedge^*(M_{2n}^+)^*\right)^G\ \  \text{and}\ \  B:=\left(\bigwedge^*(M_{2n}^+)^*\otimes M_{2n}\right)^G$$ have a natural structure of graded associative algebras. Furthermore $B$ is clearly an $A$-module. These are the structures we want to investigate.

As an algebra $A$ is generated by the element $Tr(St_{2h+1}(x_1,...,x_{2h+1}))$. Indeed  by  classical invariant theory, we know that the polynomial invariant functions on the space $M_{2n}$ are generated by traces of the monomials in the variables $x_i,x_i^s$ (FFT for matrices, see \cite{Pr2}); furthermore since $M_{2n}^+$ is a  $G-$stable subspace of $M_{2n}$, we have that every  $G-$invariant polynomial function on $M_{2n}^+$ is the restriction of a $G-$invariant polynomial on $M_{2n}$ (since $G$ is a linear reductive algebraic group). So by multilinearizing and alternating we have our claim (recall that $Tr(St_{2h}(x_1,...,x_{2n}))=0$).

Since $M_{2n}=M_{2n}^-\oplus M_{2n}^+$ we have that $\bigwedge(M_{2n}^+)^*\otimes M_{2n}= \left(\bigwedge(M_{2n}^+)^*\otimes M_{2n}^-\right)\oplus\left(\bigwedge(M_{2n}^+)^*\otimes M_{2n}^+\right)$ and, passing to the invariants, $B=B^+\oplus B^-$, with $B^{\mp}=(\bigwedge(M_{2n}^+)^*\otimes M_{2n}^{\pm})^G$.

Further, we can define the element
  \begin{equation}
 X\in B_1\end{equation}
by $X(x)=x$ for any  $x\in M_{2n}^+$ (recall that $M_{2n}^+\subset M_{2n}$).
 The following simple Proposition gives some crucial properties of $A$ and $B$:

\begin{proposition}\label{pippis}We have:
\begin{enumerate}\item
The element $Tr(St_{4k+3}(x_1,...,x_{4k+3}))$ is zero for each $k\geq 0$. 
\item   $X^{k}\in B^-$ if and only if $k\equiv 0,1$ modulo 4\item $X^{k} \in B^+$if and only if $k\equiv 2,3$ modulo 4.\end{enumerate}
\end{proposition}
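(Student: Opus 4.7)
The plan is to reduce all three assertions to a single sign computation for how the symplectic transposition acts on the standard polynomial applied to symmetric matrices. The key observation is that $s$ is an anti-involution of $M_{2n}$, i.e. $(AB)^s=B^sA^s$. Hence for any $x_1,\ldots,x_k\in M_{2n}^+$ and any $\sigma\in S_k$,
$$(x_{\sigma(1)}\cdots x_{\sigma(k)})^s = x_{\sigma(k)}\cdots x_{\sigma(1)}.$$
Reindexing by $\tau:=\sigma\circ w_0$, where $w_0$ is the longest element of $S_k$, and using $\epsilon_{w_0}=(-1)^{k(k-1)/2}$, one obtains
$$St_k(x_1,\ldots,x_k)^s = (-1)^{k(k-1)/2}\,St_k(x_1,\ldots,x_k).$$

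Next I would observe that $(-1)^{k(k-1)/2}=+1$ precisely when $k\equiv 0,1\pmod 4$ and $-1$ precisely when $k\equiv 2,3\pmod 4$ (which is immediate from the parity of one of the two consecutive integers $k,k-1$). Combined with the earlier Proposition identifying $X^{\wedge k}$ with $St_k$ evaluated on $M_{2n}^+$, this shows that $X^k$ is valued in $M_{2n}^+$ in the first case and in $M_{2n}^-$ in the second. Since the convention of the paper is $B^{\mp}=(\bigwedge (M_{2n}^+)^*\otimes M_{2n}^{\pm})^G$, this yields parts $(2)$ and $(3)$ simultaneously.

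For part $(1)$, I would invoke the fact that every element of $M_{2n}^-$ is traceless: indeed $M_{2n}^-$ is the Lie algebra of $Sp(2n)\subset SL(2n)$, and one also checks directly using $J^2=-1_{2n}$ and cyclicity of the trace that $\Tr(A^s)=\Tr(A)$, forcing $\Tr(A)=-\Tr(A)$ whenever $A^s=-A$. Since $4k+3\equiv 3\pmod 4$, the preceding step ensures $St_{4k+3}(x_1,\ldots,x_{4k+3})\in M_{2n}^-$ whenever the $x_i$ lie in $M_{2n}^+$, so its trace vanishes identically, as required.

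The whole argument is a careful bookkeeping of signs; the only point that requires attention is matching the residue of $k$ modulo $4$ with the sign of the longest element of $S_k$, and there is no substantive obstacle beyond that.
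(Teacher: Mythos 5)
Your proof is correct and follows essentially the same route as the paper: both hinge on the identity $St_k(x_1,\ldots,x_k)^s=\epsilon_{w_0}\,St_k(x_1,\ldots,x_k)$ for $x_i\in M_{2n}^+$, where $w_0$ is the reversal permutation of sign $(-1)^{k(k-1)/2}$, whose residue mod $4$ gives parts (2)--(3). The only cosmetic difference is that for part (1) the paper applies the same sign count directly to $Tr(St_{2h+1})$ using $Tr(x)=Tr(x^s)$, whereas you deduce it from parts (2)--(3) together with the tracelessness of $M_{2n}^-$; the two arguments are equivalent.
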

\begin{proof} We prove part $1$, the others being similar.

By the explicit form of the symplectic transposition (1) we have $Tr(x)=Tr(x^s)$, so
$$Tr(St_{2h+1}(x_1,...,x_{2h+1}))=Tr(St_{2h+1}(x_1,...,x_{2h+1})^s),$$ 
but
$$\begin{aligned}St_{2h+1}(x_1,...,x_{2h+1})^s&=\sum_{\sigma\in S_{2h+1}}\epsilon_{\sigma}(x_{\sigma(1)}\cdots x_{\sigma(2h+1)})^s\\ &=\sum_{\sigma\in S_{2h+1}}\epsilon_{\sigma}x_{\sigma(2h+1)}\cdots x_{\sigma(1)},\end{aligned}$$
and if we consider the permutation $\eta:(1,...,2h+1)\rightarrow (2h+1,...,1)$ we see that it has the same sign of the parity of $h$. So everything follows.
\end{proof}

We deduce  the following well know result (see for example \cite{Bo}):
\begin{teorem}\label{ciccio}
The algebra $A$ is the exterior algebra, of dimension $2^n$, in the elements\\ $T_0,T_1,...,T_{n-1}$, where $T_{h}:=Tr(S_{4h+1}(x_1,...,x_{4h+1}))\in A_{4h+1}$.
\end{teorem}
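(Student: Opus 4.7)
The plan combines the generating set already in hand with a graded refinement of the counting argument for $\dim A=2^n$ and the classical structure theorem of Borel.

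First, the preceding discussion shows that $A$ is generated as an algebra by $\{T_h=Tr(St_{4h+1})\}_{h\ge 0}$: by the FFT for the symplectic group restricted to $M_{2n}^+$, together with the vanishings $Tr(St_{2k})=0$ (Rosset) and $Tr(St_{4k+3})=0$ on $M_{2n}^+$ (Proposition \ref{pippis}(1)), these are the only non-trivial candidates among the traces of standard polynomials. Each $T_h$ has odd exterior degree $4h+1$, so $T_h\wedge T_h=0$ and $T_h\wedge T_k=-T_k\wedge T_h$; hence $A$ is automatically a quotient of the graded super-exterior algebra $\wedge(T_0,T_1,\ldots)$.

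Second, I would upgrade the enumeration behind Proposition \ref{pep} to a graded count. The $G$-invariants are parametrized by subsets $I\subseteq\{1,\ldots,n\}$, and the one attached to $I$ lies in exterior degree $|\lambda(\underline a)|/2=\sum_i a_i$ (the factor $\tfrac12$ arises because a single box of $\lambda$ uses one of the two copies of $V$inside $\bigwedge^2V$). A short computation in the two pair-sequence cases, possibly with a trailing $1$, shows that each $\beta\in I$ contributes $4\beta-3$ to the total degree, and hence
$$P_A(t)\;=\;\prod_{h=0}^{n-1}\bigl(1+t^{4h+1}\bigr).$$
So $\dim A=2^n$, and moreover each of the degrees $4h+1$ with $0\le h\le n-1$ carries exactly one ``indecomposable'' direction.

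Third, I would invoke the classical theorem (\cite{Bo}) that $A$ is free as a graded super-commutative algebra, i.e.\ isomorphic to an exterior algebra on homogeneous primitive generators. Matching against $P_A(t)$ forces there to be exactly $n$ primitives, one in each degree $4h+1$, $h=0,\ldots,n-1$. Since the $T_h$'s generate $A$, their classes $[T_h]\in A_+/A_+^2$ must span this $n$-dimensional indecomposable space; those with $h\ge n$ lie in a degree with no indecomposable and therefore vanish (so every $T_h$ with $h\ge n$ is decomposable as a sum of wedge products of lower $T_k$'s), while $[T_0],\ldots,[T_{n-1}]$ occupy the $n$ distinct one-dimensional slots and must each be nonzero in order to span. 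Hence $T_0,\ldots,T_{n-1}$ are, up to nonzero scalars, the primitive generators, and the surjection $\wedge(T_0,\ldots,T_{n-1})\twoheadrightarrow A$ is an isomorphism by comparing total dimensions.

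The main obstacle is the appeal to freeness in Step 3: super-commutativity alone only gives $A$ as a quotient of an exterior algebra and does not rule out unexpected relations among products of the $T_h$'s. What supplies freeness is the Cartan--Borel transgression theorem for the symmetric pair $(\mathfrak{sl}(2n),\mathfrak{sp}(2n))$ (after splitting off the one-dimensional scalar direction in $M_{2n}^+$); once that is available, the remaining work is a routine matching of degrees.
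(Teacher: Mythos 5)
Your proposal is correct in substance but takes a genuinely different (and heavier) route at the decisive step. Your first two steps agree with the paper: generation of $A$ by the $T_h$'s via the FFT, Rosset's vanishing and Proposition \ref{pippis}, and the enumeration of Proposition \ref{pep} (your graded refinement $P_A(t)=\prod_{h=0}^{n-1}(1+t^{4h+1})$ is correct and is a nice sharpening). The divergence is in how one disposes of $T_h$ for $h\ge n$ and turns the surjection from an exterior algebra into an isomorphism. The paper does this with the Amitsur--Levitzki theorem \cite{AL}: $St_r(x_1,\dots,x_r)=0$ on $2n\times 2n$ matrices for $r\ge 4n$, so $T_h=0$ outright for $h\ge n$; then $A$ is a quotient of the $2^n$-dimensional algebra $\wedge(T_0,\dots,T_{n-1})$, and the ungraded count $\dim A=2^n$ forces equality --- no freeness statement is needed. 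You instead import from \cite{Bo} the freeness of $A$ as an exterior algebra on homogeneous primitives and read off the generators from the Hilbert series; this is logically sound (the factorization $\prod_d(1+t^d)^{m_d}$ is unique, so the primitives sit one in each degree $4h+1$ with $h<n$, and graded Nakayama identifies them with the $T_h$'s up to scalar), and your instinct that some such extra input is indispensable is right: the graded count alone does not kill $T_n$, since already for $n\ge 11$ the degree $4n+1$ is a nonzero subset-sum of $\{1,5,\dots,4n-3\}$ and hence a nonzero graded piece of $A$. The trade-off is that the freeness of $\left(\bigwedge (M_{2n}^+)^*\right)^G$ is precisely the nontrivial classical content of the statement being proved (it fails for general symmetric pairs, which is why the paper singles out four cases), so your argument comes close to citing the theorem in order to prove it, whereas the paper's version is self-contained modulo Amitsur--Levitzki --- a choice that also matters later, since the paper runs this logic in reverse to re-derive the identities of Rowen and Hutchinson from the module structure.
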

\begin{proof} By the previous Proposition, $A$ is generated by the elements $Tr(St_{4h+1}(x_1,...,x_{4h+1}))$. By the Amitzur-Levitzki theorem, $St_{r}(x_1,...,x_{r})=0$ for $r\geq 4n$. It follows that $A$ is generated by  $T_0,T_1,...,T_{n-1}$, so it is the quotient of an exterior algebra on $n$ generators. Since dim\ $A=2^n$, our claim follows.\end{proof}

Notice that we can define the trace function $Tr:  \bigwedge (M_{2n}^+)^*\otimes M_{2n}\to \bigwedge M_{2n}^+$ by extending the invariant function on matrices. By equivariance, on $B$ the trace function takes values in $A$.  In particular notice that $Tr(X^h)(x_1,\ldots x_k)=Tr(St_h(x_1,\ldots x_k))$. 
 
\begin{teorem}\label{tony}We have:
\begin{enumerate}
 \item  As a algebra, $B$ is generated by $A$ and the element $X$.
 \item   $B$ is a free module on the 
  the exterior algebra $A_{n-1}\subset A$ generated by $T_0,...,T_{n-2}$, with basis $1,X,...,X^{4n-3}$.
 \end{enumerate}

  \end{teorem}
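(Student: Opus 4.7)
The argument should mimic the proof of Theorem \ref{Procesicov} from the linear case. First, I would show that the natural graded superalgebra homomorphism $\pi \colon A[t] \to B$, sending $t$ to $X$ (where $A[t]$ is built so that $T_i t = -t T_i$), is surjective. This gives part (1) and uses only the first fundamental theorem of invariant theory for $Sp(2n)$ applied to the $G$-stable subspace $M_{2n}^+ \subset M_{2n}$: any polynomial $G$-equivariant $M_{2n}$-valued map on $M_{2n}^+$ is generated by matrix products and traces of the arguments (using that $G$ is reductive and that every invariant on $M_{2n}^+$ extends from $M_{2n}$), and after multilinearizing and antisymmetrizing we land in the image of $\pi$.

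Next, as an $A_{n-1}$-module $A = A_{n-1} \oplus A_{n-1} T_{n-1}$, and the relations $T_{n-1}^2 = 0$ together with $T_{n-1} X = -X T_{n-1}$ imply that $B$ is $A_{n-1}$-spanned by the monomials $X^N$ and $T_{n-1} X^N$ for $N \geq 0$. Two reductions cut this set down to the proposed basis. On one hand, the proposition in Section~1 identifies $X^{4n-2}$, as a multilinear map, with $St_{4n-2}$ evaluated on $M_{2n}^+$; by Rowen's theorem this vanishes, hence $X^{4n-2} = 0$ in $B$ and only $X^0, \dots, X^{4n-3}$ survive. On the other hand, one needs a reduction relation of the form
$$ T_{n-1} \;=\; \beta\, X^{4n-3} + \sum_{k=0}^{n-2} \alpha_k\, T_k \wedge X^{4(n-1-k)} $$
(for suitable scalars $\alpha_k,\beta$), allowing one to eliminate every occurrence of $T_{n-1}$ in favor of powers of $X$ with $A_{n-1}$-coefficients.

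Granted these two reductions, $\{1, X, \dots, X^{4n-3}\}$ generates $B$ as an $A_{n-1}$-module. Because $\dim A_{n-1} = 2^{n-1}$ and, by Theorem~\ref{ladim}, $\dim B = (2n-1) 2^n = (4n-2)\cdot 2^{n-1}$, the generating set has exactly the right cardinality and must therefore be a basis, finishing part (2). Part (1) also follows, since $T_{n-1}$ is now seen to be a polynomial in $X$ over $A_{n-1}$, so $B$ is generated as an algebra by $A_{n-1}$ and $X$, and a fortiori by $A$ and $X$.

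The crux is establishing the reduction relation for $T_{n-1}$. Its existence is forced by the dimension count---once surjectivity of $\pi$ and $X^{4n-2}=0$ are in hand the spanning set $\{X^N, T_{n-1} X^N\}_{0\le N \le 4n-3}$ has size $8n-4$ while the $A_{n-1}$-rank of $B$ is only $4n-2$, so relations must exist---but one must verify that $T_{n-1}$ actually appears with nonzero coefficient, i.e.\ that $1,X,\dots,X^{4n-3}$ are $A_{n-1}$-linearly independent. The most natural route is to linearize an appropriate Cayley--Hamilton-type identity for symplectic-symmetric matrices and project it onto the degree $4n-3$ component, or alternatively to exhibit explicit symplectic-symmetric matrices on which the putative basis evaluates to a linearly independent family; this is where the main technical effort lies.
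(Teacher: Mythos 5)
Your overall skeleton (surjectivity of $\pi$, a spanning set of the right cardinality, then the dimension count of Theorem \ref{ladim} to upgrade spanning to freeness) is sound and is the dual of the paper's strategy, which instead checks linear independence of $1,X,\dots,X^{4n-3}$ and lets the dimension count supply spanning. But your proposal has a genuine gap exactly where you admit ``the main technical effort lies'': the reduction relation $T_{n-1}=\beta X^{4n-3}+\sum_k \alpha_k T_k\wedge X^{4(n-1-k)}$ with $\beta\neq 0$ is never established, and it is \emph{not} forced by the dimension count. The count only shows that the $8n-4$ elements $X^N,\ T_{n-1}\wedge X^N$ cannot be $A_{n-1}$-independent; a priori the relations could all live among the $X^N$ themselves, in which case $T_{n-1}$ would not be eliminable and your proposed set would fail to span. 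So the crux of part (2) is left as an unproven assertion, with only a vague pointer to ``linearize a Cayley--Hamilton-type identity'' or ``exhibit explicit matrices.''

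The paper closes this gap with a short trace argument that you could adopt almost verbatim. Given a relation $\sum_{h}P_h\wedge X^h=0$ with $P_h\in A_{n-1}$ and $j$ minimal with $P_j\neq 0$, multiply by $X^{4n-3-j}$ and apply $Tr$. All terms with $h\ge j+3$ die because $X^m=0$ for $m\ge 4n$ (Amitsur--Levitzki), and the terms $h=j+1,\ j+2$ die because $Tr(X^{4n-2})=0$ (even degree) and $Tr(X^{4n-1})=Tr(St_{4(n-1)+3})=0$ by Proposition \ref{pippis}. What survives is $\pm P_j\wedge T_{n-1}=0$, which forces $P_j=0$ since $P_j$ lies in the exterior subalgebra $A_{n-1}$ not involving the generator $T_{n-1}$ --- a contradiction. (The same trick, applied to the relation expressing $T_{n-1}$ in the $X^h$, is what pins down the coefficients $\beta=2n$ and $\alpha_k$ in Theorem \ref{tony2}.) Note also that the paper deliberately does not assume Rowen's identity $X^{4n-2}=0$ as you do: it uses only $X^m=0$ for $m\ge 4n$ together with the vanishing traces, and then \emph{recovers} Rowen's theorem as a corollary of the kernel computation; importing it as an input is not circular, since Rowen's proof is independent, but it undercuts one of the stated aims of the section.
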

\begin{proof} 1) follows from classical invariant theory (see \cite{Pr3},\cite{Pr2},\cite{BPS}).

2)  By  Proposition \ref{ladim}, it suffices to see that the elements  $1,X,...,X^{4n-3}$ are linearly independent over $A_{n-1}$. So, let
$$\sum_{h=0}^{4n-3}P_j\wedge X^j=0,$$ $P_h\in A_{n-1}$ for each $h=0,\ldots , 4n-3$. Assume by contradiction that not all $P_h$'s are $0$. Let $j$ be the minimum such that $P_j\neq 0$. Multiply by $X^{4n-3-j}$ and take traces. We get
$P_j\wedge T_{n-1}=0$, since for $h\geq 4n$, $X^{h}=0$,    $Tr(X^{4n-2})=Tr(X^{4n-1})=0$  by Proposition \ref{pippis} and the fact that $4n-2$ is even.
$P_j\in A_{n-1}$, so $P_j\wedge T_{n-1}=0$ if and only if $P_j=0$, a contradiction.
\end{proof}
Consider now the graded superalgebra $A[t]$, with deg $t=1$. Notice that for each $i$,\\ $ T_it=-tT_i.$ Define the graded algebra homomorphism 
$\pi:A[t]\to B$ by $$\sum_ja_jt^j\mapsto \sum_ja_jX^j ,$$
$a_j\in A$.  By the previous theorem we see that $\pi$ is surjective. Let us describe its kernel.
\begin{teorem}\label{tony2}  The Kernel $I$ of the homomorphism $\pi$ is the principal ideal generated by the element
 $$ nt^{4n-3}-\sum_{i=0}^{n-1}\frac{1}{2} T_{n-i-1}t^{4i}.$$
 \end{teorem}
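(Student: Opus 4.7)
The plan is to prove $\ker(\pi)=(f)$ by establishing the two inclusions separately. The reverse inclusion reduces to a dimension count using Theorem \ref{ladim}, while the forward inclusion carries the substantive content, namely verifying the multilinear identity $\pi(f)=0$.

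I would first dispose of $\ker(\pi)\subseteq(f)$ by a generation argument. Since each $T_i$ has odd degree in $A[t]$, one has $T_it=-tT_i$, and a direct computation gives
\[tf+ft=2n\,t^{4n-2}.\]
Thus $t^{4n-2}\in(f)$, so $t^k\equiv 0\pmod{(f)}$ for all $k\ge 4n-2$. Moreover, $f$ itself can be solved for $T_{n-1}$:
\[T_{n-1}\equiv 2n\,t^{4n-3}-\sum_{i=1}^{n-1}T_{n-i-1}\,t^{4i}\pmod{(f)}.\]
Together with the decomposition $A=A_{n-1}\oplus A_{n-1}T_{n-1}$, these relations show that $A[t]/(f)$ is generated as an $A_{n-1}$-module by $\{1,t,\ldots,t^{4n-3}\}$, so
\[\dim\bigl(A[t]/(f)\bigr)\le(4n-2)\cdot 2^{n-1}=(2n-1)2^n=\dim B.\]
Since $\pi$ is surjective by Theorem \ref{tony}, the induced map $A[t]/(f)\to B$ must be an isomorphism, provided $(f)\subseteq\ker(\pi)$.

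For the forward inclusion I have to verify
\[2n\,X^{4n-3}=\sum_{i=0}^{n-1}T_{n-i-1}\wedge X^{4i}\quad\text{in }B.\]
Since $B$ is free over $A_{n-1}$ with basis $\{X^j\}_{j=0}^{4n-3}$ (Theorem \ref{tony}), the element $T_{n-1}\in A\subset B$ admits a unique expansion $T_{n-1}=\sum_{j=0}^{4n-3}p_jX^j$ with $p_j\in A_{n-1}$ of degree $4n-3-j$. A degree analysis in $A_{n-1}=\wedge(T_0,\ldots,T_{n-2})$, whose generators have degrees $\equiv 1\pmod 4$, restricts the shape of each $p_j$. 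Applying the trace map $Tr\colon B\to A$ and using Proposition \ref{pippis} (so that $Tr(X^m)=T_{(m-1)/4}$ when $m\equiv 1\pmod 4$ and $Tr(X^m)=0$ otherwise), one obtains from $Tr(T_{n-1})=2nT_{n-1}$ a scalar identity in $A$ that forces $p_{4n-3}=2n$. Iterating by multiplying with $X^k$ and tracing again pins down the remaining $p_{4i}=-T_{n-i-1}$ for $i=1,\ldots,n-1$, and shows the other $p_j$ vanish.

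The main obstacle is this last step: extracting the precise coefficients, and in particular the normalization $\tfrac12$ appearing in $f$, rather than merely demonstrating the existence of some relation of degree $4n-3$. The identity is essentially the antisymmetrized Cayley--Hamilton relation for matrices in $M_{2n}^+$, whose characteristic polynomial contains only even powers of $t$. As the introduction remarks, once $\pi(f)=0$ is known it immediately yields Rowen's theorem $X^{4n-2}=0$ via $tf+ft=2nt^{4n-2}$; conversely, taking Rowen's identity for granted, the free $A_{n-1}$-module structure of $B$ singles out $f$ as the unique degree-$(4n-3)$ relation among the $X^j$, providing an alternative route to the same conclusion.
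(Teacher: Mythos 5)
Your proposal is correct and follows essentially the same route as the paper's proof: the inclusion $(f)\subseteq\ker\pi$ is obtained by expanding $T_{n-1}$ in the free $A_{n-1}$-basis $1,X,\dots,X^{4n-3}$ of $B$ and pinning down the coefficients $p_j$ by multiplying with powers of $X$ and applying the trace (using $Tr(T_{n-1})=2nT_{n-1}$ and Proposition \ref{pippis}), while the reverse inclusion follows from $tf+ft=2n\,t^{4n-2}$ and the dimension bound $\dim A[t]/(f)\le(4n-2)2^{n-1}=(2n-1)2^n=\dim B$. The only difference is the order in which the two inclusions are presented.
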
\begin{proof}
Let us first start by showing that the element $nt^{4n-3}-\sum_{i=0}^{n-1}\frac{1}{2} T_{n-i-1}t^{4i}$ lies in $I$. By the second part of Theorem \ref{ciccio}, we necessarily have a homogeneous relation
$$T_{n-1}=\sum_{h=0}^{4n-3}P_h\wedge X^{h}$$
with $P_h\in A_{n-1}$ of degree $4n-3-h$. Let us compute $P_{4n-3}$. Notice that $Tr(\sum_{h=0}^{4n-4}P_h\wedge X^{h})\in A_{n-1}$, while $Tr(T_{n-1})=2nT_{n-1}$ It follows that
$(2n-P_{4n-3})\wedge T_{n-1}\in A_{n-1}$ namely $P_{4n-3}=2n$. Assume now $0\leq j\leq 4n-4$ and multiply by $X^{4n-3-j}$. Taking the trace and reasoning as in the proof of Theorem \ref{tony}, we
get that $$T_{n-1}\wedge (Tr(X^{4n-3-j})+(-1)^{j}P_j)\in A_{n-1}$$
If $j=0$ we get $P_0=0$. Assume $j>0$. This implies 
$Tr(X^{4n-3-j})+(-1)^{j}P_j=0$ that is
$$P_j= \begin{cases} 0 \ \ \text{if}\ j=2h+1, 4h+2\\ T_{h}\  \ \ \text{if}\ i=4h+1\end{cases}$$
Thus dividing by 2 we obtain\begin{equation}\label{laequa}\frac{1}{2}T_{n-1}=nX^{4n-3}-\sum_{j=1}^{n-1}\frac{1}{2}T_{n-j-1}\wedge X^{4j}\end{equation}
which is our relation.
Denote by $J$ the ideal generated by 
$$ nt^{4n-3}-\sum_{i=0}^{n-1}\frac{1}{2} T_{n-i-1}t^{4i}.$$
We have seen that  $J\subset I$.

Now we show that $t^{2n-2}\in J$. To see this let us write the relation \eqref{laequa} as
$$nt^{2n-3}-\sum_{h=0}^{n-1}T_{n-h-1}t^{4h}.$$
Remark that multiplying on the right by $t$ we get
$$nt^{2n-2}-\sum_{h=0}^{n-1}T_{n-h-1}t^{4h+1}\in J.$$ 
If we multiply by $t$ on the left we get
$$nt^{2n-2}-\sum_{h=0}^{n-1}tT_{n-h-1}t^{4h}=nt^{2n-2}+\sum_{h=0}^{n-1}T_{n-h-1}t^{4h+1}.$$ 
Adding the two relation we thus obtain 
\begin{equation}\label{hutch}
2nt^{2n-2}\in J.
\end{equation}

Now consider $A/J$. Using \eqref{hutch}, it is clear that the image of $1,t,\ldots t^{4n-3}$ span  $A/J$ as a $A$ module. However by \eqref{laequa} we deduce that
$$T_{n-1}=2nt^{4n-3}-\sum_{i=1}^{n-1} T_{n-i-1}t^{4i},$$
so that, substituting, the same elements span $A/J$ as a $A_{n-1}$ module. It follows that dim $A/J\leq (4n-2)2^{n-1}=(2n-1)2^n$. On the other hand we know by Proposition \ref{ladim} that
dim $A/I$= dim $B=(2n-1)2^n$. We deduce that $I=J$.
\end{proof}

\begin{remark} The fact that $X^{2n-2}=0$ means that the standard polynomial $St_{4n-2}(x_1,...,x_{4n-2})$ is identically $0$ for $x_1,\ldots ,x_{4n-2}\in M_{2n}^+$. This is a result of Rowen \cite{Ro}.\end{remark}

Finally using Proposition \ref{pippis} we deduce,
\begin{corollary}
1) The elements $1,X,X^4,X^5,..., X^{4n-3}$ are a basis of $B^-$ as a free module  over the exterior algebra $A_{n-1}$.

2) The elements $X^2,X^3,X^6,X^7,...,X^{4n-5}$ are a basis $B^+$ as a free module over the exterior algebra $A_{n-1}$.
\end{corollary}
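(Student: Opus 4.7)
The plan is to derive both parts directly from Theorem~\ref{tony} together with Proposition~\ref{pippis}, so essentially no new work is needed beyond bookkeeping the parity of $j$ modulo $4$.

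First I would observe that the decomposition $B = B^+ \oplus B^-$ is stable under multiplication by $A$. Indeed, elements of $A \subset \bigwedge (M_{2n}^+)^*$ act trivially on the matrix-valued factor $M_{2n}$, and the decomposition $M_{2n} = M_{2n}^+ \oplus M_{2n}^-$ is what defines $B^\mp$; hence $A$ (and in particular $A_{n-1}$) acts on each of $B^+$ and $B^-$ separately. So both $B^+$ and $B^-$ make sense as $A_{n-1}$-submodules of $B$.

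Next, by Theorem~\ref{tony} every element $b \in B$ has a unique expression
\[
b = \sum_{j=0}^{4n-3} a_j \wedge X^j, \qquad a_j \in A_{n-1}.
\]
By Proposition~\ref{pippis}, the monomial $X^j$ lies in $B^-$ when $j \equiv 0,1 \pmod 4$ and in $B^+$ when $j \equiv 2,3 \pmod 4$, and since $a_j$ does not affect the matrix component, $a_j \wedge X^j$ lies in the same summand as $X^j$. Projecting the equation above onto $B^-$ (resp.\ $B^+$) therefore forces $a_j = 0$ for all $j$ outside the corresponding residue classes. This shows that the proposed lists span $B^-$ and $B^+$ respectively over $A_{n-1}$. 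Enumerating which $j \in \{0,1,\dots,4n-3\}$ fall into each residue class gives exactly $\{0,1,4,5,\dots,4n-4,4n-3\}$ for $B^-$ and $\{2,3,6,7,\dots,4n-6,4n-5\}$ for $B^+$, as claimed.

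Finally, freeness of these families over $A_{n-1}$ is inherited from the freeness of the full basis $1,X,\dots,X^{4n-3}$ given in Theorem~\ref{tony}: any nontrivial $A_{n-1}$-linear relation among the $X^j$ appearing in either of our two sublists would be a nontrivial relation among $1,X,\dots,X^{4n-3}$. No step here is really an obstacle; the only point that might deserve a sentence is the remark that $A$ preserves the $\pm$-decomposition, everything else being a direct partition of the basis already produced.
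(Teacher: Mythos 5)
Your proposal is correct and follows exactly the route the paper intends: the corollary is deduced by combining the free basis $1,X,\dots,X^{4n-3}$ of $B$ over $A_{n-1}$ from Theorem~\ref{tony} with the residue classification of $X^k$ modulo $4$ from Proposition~\ref{pippis}, partitioning the basis between $B^-$ and $B^+$. The only detail you add beyond the paper's one-line deduction is the (correct and worthwhile) remark that wedging with elements of $A$ preserves the splitting $B=B^+\oplus B^-$.
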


\subsubsection{Structure of invariants of the representation $\bigwedge (M_{2n}^-)^*\otimes M_{2n}$}
By the classical invariant theory we have that polynomial functions $G$-equivariant from $m$-copies of $L$ to $M_{2n}$, are an associative algebra generated by {\em coordinates} $Y_i$ and by the traces of monomials in $Y_i$. So we have that $[\bigwedge L^*\otimes M_{2n}]^G$  is generated by the element $Y,\ Y^s=-Y$  and $Tr(Y^i)$. Further we can decompose in a natural way $B=B^+\oplus B^-$ as

$$B:=[\bigwedge  L^*\otimes  M_{2n}]^G=[\bigwedge  L^*\otimes  L]^G\oplus [\bigwedge L^*\otimes M_{2n}^+]^G.$$ 
We have an analog of the Proposition \ref{pippis}:
\begin{proposition}\label{pippis2}
\begin{enumerate}\item
The element $Tr(St_{4k+1}(y_1,...,y_{4k+1}))$ is zero for each $k\geq 0$. 
\item   $Y^{k}\in B^-$ if and only if $k\equiv 0,3$ modulo 4\item $Y^{k} \in B^+$ if and only if $k\equiv 1,2$ modulo 4.\end{enumerate}

\end{proposition}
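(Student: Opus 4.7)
The proof proceeds in almost complete analogy to Proposition \ref{pippis}, the only new ingredient being a sign flip coming from the fact that now $y_i^s = -y_i$ instead of $x_i^s = x_i$. So the plan is: first derive a clean formula for $St_k(y_1,\ldots,y_k)^s$ in terms of $St_k(y_1,\ldots,y_k)$, and then read off parts 1, 2 and 3 directly from the sign.

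Concretely, since $(y_1\cdots y_k)^s = y_k^s\cdots y_1^s = (-1)^k y_k\cdots y_1$, I would compute
$$St_k(y_1,\ldots,y_k)^s = \sum_{\sigma\in S_k}\epsilon_\sigma (-1)^k y_{\sigma(k)}\cdots y_{\sigma(1)},$$
and then, exactly as in the proof of Proposition \ref{pippis}, apply the reversal permutation $\eta:(1,\ldots,k)\mapsto (k,\ldots,1)$, whose sign is $(-1)^{k(k-1)/2}$. This produces
$$St_k(y_1,\ldots,y_k)^s = (-1)^{k+k(k-1)/2}\, St_k(y_1,\ldots,y_k) = (-1)^{k(k+1)/2}\, St_k(y_1,\ldots,y_k),$$
which differs from the formula in the symmetric case by precisely an extra $(-1)^k$.

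For parts 2 and 3 this is all that is needed: $Y^k$ lies in the $+1$ eigenspace $M_{2n}^+$ of $s$ iff $k(k+1)/2$ is even, i.e.\ iff $k\equiv 0,3\pmod 4$, and in the $-1$ eigenspace $M_{2n}^-$ iff $k(k+1)/2$ is odd, i.e.\ iff $k\equiv 1,2\pmod 4$. Recalling from the labelling convention introduced above that $B^+$ is the $M_{2n}^-$-valued part of $B$ while $B^-$ is the $M_{2n}^+$-valued part, this gives exactly the statements (2) and (3).

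For part 1, I would use, as in Proposition \ref{pippis}, that the symplectic transpose is trace preserving: $\Tr(A) = \Tr(A^s)$, which follows at once from $A^s=-JA^tJ$ together with $J^2=-1_{2n}$. Specialising the identity above to $k=4h+1$ gives
$$\Tr(St_{4h+1}(y_1,\ldots,y_{4h+1})) = (-1)^{(4h+1)(2h+1)}\,\Tr(St_{4h+1}(y_1,\ldots,y_{4h+1})),$$
and the exponent $(4h+1)(2h+1)$ is a product of two odd integers, hence odd, so the trace equals its own negative and must vanish. The only step where one needs to be careful is matching the parity computation to the sign convention $B^{\mp}\leftrightarrow M_{2n}^{\pm}$ inherited from the symmetric case; no genuine obstacle arises.
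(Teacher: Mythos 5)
Your proposal is correct and follows essentially the same route as the paper: use that the symplectic transpose is a trace-preserving anti-automorphism, pick up the extra $(-1)^k$ from $y_i^s=-y_i$, and combine it with the sign $(-1)^{k(k-1)/2}$ of the reversal permutation to get $St_k(y_1,\ldots,y_k)^s=(-1)^{k(k+1)/2}St_k(y_1,\ldots,y_k)$, from which all three parts follow. The paper only writes out part 1 explicitly and declares the rest similar, so your version is in fact slightly more complete, and your parity bookkeeping (including the convention $B^{\mp}\leftrightarrow M_{2n}^{\pm}$) matches the paper's.
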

\begin{proof}
 As in the Proposition \ref{pippis} we just prove the first fact.

We recall that
$$Tr(St_{2h+1}(y_1,...,y_{2h+1}))=Tr(St_{2h+1}(y_1,...,y_{2h+1})^s),$$
but
$$\begin{aligned}St_{2h+1}(y_1,...,y_{2h+1})^s&=\sum_{\sigma\in S_{2h+1}}\epsilon_{\sigma}(y_{\sigma(1)}\cdots y_{\sigma(2h+1)})^s\\ &=\sum_{\sigma\in S_{2h+1}}(-1)^{2h+1}\epsilon_{\sigma}y_{\sigma(2h+1)}\cdots y_{\sigma(1)},\end{aligned}$$
and if we consider the permutation $\eta:(1,...,2h+1)\rightarrow (2h+1,...,1)$ we see that it has the same sign of the parity of $h$. So everything follows.
\end{proof}

We set 
\begin{equation}
T_h:=Tr(St_{4h+3}(Y_1,...,Y_{4h+3}))
\end{equation}
we have, by the computation of the dimension and by the Amitsur-Levitzki's Theorem, that
\begin{lemma}
The algebra $A:=(\bigwedge M_{2n}^-)^G$ is the exterior algebra in the elements  $T_0,...,T_{n-1}$.
\end{lemma}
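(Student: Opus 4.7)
The plan is to mirror almost verbatim the proof of Theorem \ref{ciccio}, replacing the role of the $+1$-eigenspace by the $-1$-eigenspace and taking into account the sign change appearing in Proposition \ref{pippis2}. The key ingredients needed are: (i) a generating set for $A$ coming from the first fundamental theorem for matrices; (ii) vanishing of the ``wrong parity'' traces, which reduces the list of generators; (iii) an upper bound on the number of surviving generators from the Amitsur--Levitzki theorem; and (iv) the known dimension count $\dim A=2^{n}$ already established after the Plethysm calculation.

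More concretely, first I would invoke classical invariant theory exactly as in the discussion preceding Theorem \ref{ciccio}: since $G$ is linearly reductive and $M_{2n}^{-}\subset M_{2n}$ is a $G$-stable subspace, every $G$-invariant polynomial function on $M_{2n}^{-}$ is the restriction of a $G$-invariant polynomial on $M_{2n}$. By the FFT for matrices, the latter are generated by traces of monomials in the variables and their symplectic transposes; but on $M_{2n}^{-}$ we have $y_i^{s}=-y_i$, so the symplectic transposes contribute only a sign. Multilinearizing and antisymmetrizing yields that $A$ is generated, as an algebra, by the elements $\mathrm{Tr}(St_{h}(y_1,\ldots,y_{h}))$ with $h\geq 1$. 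Moreover $\mathrm{Tr}(St_{2h}(y_1,\ldots,y_{2h}))=0$ for every $h$ (Rosset), so only odd $h$ need to be considered.

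Next I would apply Proposition \ref{pippis2}(1), which tells us that $\mathrm{Tr}(St_{4k+1}(y_1,\ldots,y_{4k+1}))=0$ for all $k\geq 0$. Consequently the only odd-degree generators that survive are precisely the $T_h=\mathrm{Tr}(St_{4h+3}(y_1,\ldots,y_{4h+3}))$ for $h\geq 0$. The Amitsur--Levitzki theorem forces $St_{r}=0$ on $M_{2n}$ for $r\geq 4n$, so $T_h=0$ for $4h+3\geq 4n$, i.e.\ for $h\geq n$. Thus $A$ is generated by the $n$ odd-degree elements $T_0,T_1,\ldots,T_{n-1}$, and so is a quotient of the exterior algebra $\wedge(T_0,\ldots,T_{n-1})$, which has dimension $2^{n}$.

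Finally, the dimension of $A$ has already been computed to be $2^{n}$ in the proposition immediately following the Plethysm decomposition for $S^{2}V$. Therefore the surjection $\wedge(T_0,\ldots,T_{n-1})\twoheadrightarrow A$ must be an isomorphism, proving that $A$ is the exterior algebra on $T_0,\ldots,T_{n-1}$. No step of this argument presents a real obstacle; the only delicate point is the parity bookkeeping (deciding which $T_h$ vanish and which do not), which is already handled by Proposition \ref{pippis2}.
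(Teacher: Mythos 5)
Your proposal is correct and follows exactly the route the paper intends: the paper justifies this lemma in one line by appealing to the dimension count $\dim A=2^n$, Proposition \ref{pippis2}, and Amitsur--Levitzki, which is precisely the argument you spell out (mirroring the proof of Theorem \ref{ciccio}). No discrepancies.
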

So we can describe the structure of covariants:
\begin{teorem}\label{tony3}
\begin{enumerate}
\item The algebra $B$ is a free module on the subalgebra $A_{n-1}\subset A$ generated by the elements $T_i$, $i=0,...,n-2$, with basis $1,...,Y^{4n-1}$.
\item With analogous notations to the previous section we have that the kernel of the canonical homomorphim $\pi:A[t]\rightarrow B$ is the principal ideal generated by 
$$\sum_{i=0}^{ n-1}t^{4i} \wedge  T_{n- i-1}-2nt^{4n-1}.$$
\end{enumerate}

\begin{proof}
By computing dimensions we only need a formula to compute the multiplication by the element $T_{n-1}=Tr(Y^{4(n-1)+3})=Tr(Y^{4 n-1 })$, but in this case we can use the general formulas for the matrices (see [2]).

We start from the universal formula for $M_{2n}$:

\begin{equation}\label{genmat}
1\wedge T(4n-1)=T(4n-1)=-\sum_{i=1}^{2n-1}Z^{2i} \wedge  T(2(2n-i)-1) +2nZ^{4n-1},\end{equation}
where $Z:M_{2n}\rightarrow M_{2n}$ is the identity map and $T(h):=Tr(St_{h}(x_1,...,x_h))$.\\
In our case $Z\leadsto Y$ and we have (by \ref{pippis2}) that $T(2(2n-i)-1)=0$  unless that \\ $-2i-1=3\mbox{ mod}(4)$. So $i$ is even. We deduce:

\begin{equation}
\label{liden}T_{n-1}=-\sum_{i=1}^{ n-1}Y^{4i} \wedge  T_{n- i-1} +2nY^{4n-1},
\end{equation}  
so
\begin{equation}
\label{liden2}Y^j\wedge T_{n-1}=-\sum_{i=1}^{ n-\left[\frac j 4\right]}Y^{4i+j} \wedge  T_{n-i-1}.
\end{equation}
\end{proof}
\end{teorem}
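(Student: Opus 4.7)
The plan is to follow the blueprint of Theorem \ref{tony2}, substituting Proposition \ref{pippis2} for Proposition \ref{pippis} and using the fact that $Y^{4n}=0$ (the classical Amitsur--Levitzki identity on $M_{2n}$, applied through the inclusion $M_{2n}^-\subset M_{2n}$) in place of Rowen's identity $X^{4n-2}=0$.

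For part (1), I first note that Theorem \ref{ladim} gives $\dim B = 2n\cdot 2^n$, which matches $4n\cdot \dim A_{n-1}=4n\cdot 2^{n-1}$, so proving linear independence of $1,Y,\ldots,Y^{4n-1}$ over $A_{n-1}$ will suffice. The strategy is the standard one: assume a relation $\sum_{h=0}^{4n-1}P_h\wedge Y^h=0$ with all $P_h\in A_{n-1}$ and pick the minimal index $j$ with $P_j\neq 0$. Multiplying on the right by $Y^{4n-1-j}$ kills every term with $h>j$ because then $Y^{h+4n-1-j}=0$ by Amitsur--Levitzki; applying the trace converts what remains into $P_j\wedge T_{n-1}=0$ in $A=\wedge(T_0,\ldots,T_{n-1})$, and since $P_j$ does not involve $T_{n-1}$ this forces $P_j=0$, a contradiction.

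For part (2), the idea is first to exhibit the given element in $\ker\pi$ and then to show by a dimension count that the principal ideal $J$ it generates has $A[t]/J$ of the correct size. To produce the relation I would start from the universal matrix identity \eqref{genmat},
\begin{equation*}
T(4n-1) = -\sum_{i=1}^{2n-1} Z^{2i}\wedge T(2(2n-i)-1) + 2n\,Z^{4n-1},
\end{equation*}
substitute $Z\leadsto Y$, and use Proposition \ref{pippis2} to discard every trace $T(4n-2i-1)$ with $i$ odd; the surviving even-indexed terms, after relabelling $i=2i'$, reassemble into (\ref{liden}), which is exactly the displayed generator. For the dimension bound, a short computation using the anticommutation $T_it=-tT_i$ shows that multiplying the generator by $t$ on the two sides and adding produces $4n\,t^{4n}\in J$, whence $t^{4n}=0$ in $A[t]/J$. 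Using the generator to eliminate $T_{n-1}$, every element of $A[t]/J$ can be written as an $A_{n-1}$-combination of $1,t,\ldots,t^{4n-1}$, so $\dim A[t]/J\leq 4n\cdot 2^{n-1}=\dim B$. Combined with $J\subseteq I$ and the surjectivity of $\pi$, this forces $J=I$.

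The only nontrivial point is pinning down the residual constant $2n$ in (\ref{liden}): it is the coefficient of $Z^{4n-1}$ in the universal matrix formula (and equals the matrix size), and it is precisely what makes the left/right multiplication trick yield $t^{4n}\in J$. Everything else is a mechanical parity check against Proposition \ref{pippis2}, so I expect no serious obstacle beyond carefully matching indices when transferring the identity from $M_{2n}$ to $M_{2n}^-$.
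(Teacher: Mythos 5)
Your proposal is correct and follows essentially the same route as the paper: the universal trace identity for $M_{2n}$ specialized via $Z\leadsto Y$, the parity check through Proposition \ref{pippis2} to keep only even $i$ and obtain \eqref{liden}, and the dimension count against Theorem \ref{ladim}. You merely spell out the linear-independence and $t^{4n}\in J$ arguments that the paper compresses into a reference to the proofs of Theorems \ref{tony} and \ref{tony2}.
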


So by this last result and by Proposition \ref{pippis2} we have (consistently to the general theory for Lie algebras developed in \cite{PDP}):
\begin{corollary}
\begin{enumerate}
\item The $2n$ elements $Y,Y^2,Y^5,Y^6,\ldots, Y^{4n-3}, Y^{4n-2} $ are a basis of\\
$B^+$  on the exterior algebra generated by the $n-1$ elements  $T_i,\ i=0,\ldots,n-2$. 
\item The $2n$ elements $1,Y^3,Y^4,Y^7,\ldots, Y^{4n-4}, Y^{4n-1}$ are a basis of $B^-$  on the exterior algebra generated by the $n-1$ elements  $T_i,\ i=0,\ldots,n-2$
\end{enumerate}
\end{corollary}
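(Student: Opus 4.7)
The plan is to derive this corollary as an immediate consequence of Theorem \ref{tony3} together with Proposition \ref{pippis2}. The key observation is that the $A_{n-1}$-module structure on $B$ respects the direct sum decomposition $B=B^+\oplus B^-$: indeed, $A_{n-1}\subset A=(\bigwedge L^*)^G$ is a ring of scalar-valued invariants, and wedging a scalar-valued form with an element of $\bigwedge L^*\otimes M_{2n}^{\pm}$ keeps us in $\bigwedge L^*\otimes M_{2n}^{\pm}$. Hence $B^+$ and $B^-$ are both $A_{n-1}$-submodules of $B$.

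By Theorem \ref{tony3}, the set $\{Y^k\mid 0\leq k\leq 4n-1\}$ is a free $A_{n-1}$-basis of $B$. By Proposition \ref{pippis2}, each $Y^k$ in this list lies in exactly one of $B^+$ or $B^-$, according to the residue of $k$ modulo $4$: $Y^k\in B^+$ iff $k\equiv 1,2\pmod 4$, and $Y^k\in B^-$ iff $k\equiv 0,3\pmod 4$. Partitioning the index set $\{0,1,\ldots,4n-1\}$ accordingly, one obtains the two lists $\{1,2,5,6,\ldots,4n-3,4n-2\}$ and $\{0,3,4,7,\ldots,4n-4,4n-1\}$, each of cardinality $2n$. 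Since writing an arbitrary $b\in B$ as $b=\sum_k a_k\wedge Y^k$ with $a_k\in A_{n-1}$ and then separating by the parity condition above gives the unique decomposition $b=b^++b^-$ in $B^+\oplus B^-$, the two sublists are respectively $A_{n-1}$-bases of $B^+$ and $B^-$.

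There is essentially no obstacle to this argument: the work has already been done in proving Theorem \ref{tony3} (freeness over $A_{n-1}$) and in Proposition \ref{pippis2} (the parity classification of $Y^k$), and the only additional input is the trivial remark that multiplication by a scalar invariant preserves the target decomposition $M_{2n}=M_{2n}^+\oplus M_{2n}^-$. The $2n+2n=4n$ count is also consistent with Theorem \ref{ladim}, which provides a useful sanity check.
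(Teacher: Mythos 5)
Your argument is correct and is exactly the one the paper intends: the corollary is stated there as an immediate consequence of Theorem \ref{tony3} and Proposition \ref{pippis2}, and you have simply made explicit the (true but unstated) point that wedging with scalar-valued invariants from $A_{n-1}$ preserves the decomposition $B=B^+\oplus B^-$, so the free basis $1,Y,\ldots,Y^{4n-1}$ splits into bases of the two summands according to the residue of the exponent modulo $4$. Nothing further is needed.
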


\subsection{The odd orthogonal case}
In the last part of this section we want to generalize our considerations to the case in which the group $G$ considered is not $Sp(2n)$, but $O(2n+1)$. Results are very similar and, almost always, the proofs are the same as in the symplectic case, so we leave details to the reader.

In this case we start to investigate the space $(\bigwedge (M_{2n+1}^{\pm})^*\otimes M_{2n+1})^G$, where $M_{2n+1}^{\pm}$ indicates the space of the symmetric or skew-symmetric matrices with respect to the usual transposition.
\subsubsection{Dimension}
We start analyzing the skew-symmetric case. We recall the isomorphism $M_{2n+1}^-\simeq \bigwedge^2 V$, where $V=\mathbb{C}^{2n+1}$. We have the usual decomposition:
$$\bigwedge\left(\bigwedge^2V\right) =\bigoplus H_{a_1,a_2,\ldots,a_k}^-(V), $$
with $2n+1>a_1>\cdots a_k>0$. The only difference with the symplectic case regards the invariants. This time, there is the invariant only if the diagram has even columns (see \cite{Pr2}).

We have:
\begin{proposition}\label{aldo}
The dimension of the space of invariants $\bigwedge\left(M_{2n+1}^-\right)^G$ is $2^n$.
\begin{proof}
We want that the diagram has even columns. So the sequence $a_1>a_2>...>a_k$ must be of type $2n+1>b_1>b_1-1>b_2>b_2-1>...$, with $b_i$ even (not zero). It follows that the number of these sequences is the number of different sequences composed by even numbers minor than $2n+1$, therefore likewise the symplectic case we have our claim.
\end{proof}
\end{proposition}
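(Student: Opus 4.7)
I would follow exactly the same strategy used in the symplectic case (Proposition \ref{pep}), with two modifications dictated by switching from $Sp(2n)$ acting on $\bigwedge^2 \mathbb{C}^{2n}$ to $O(2n+1)$ acting on $\bigwedge^2 \mathbb{C}^{2n+1}$: the plethysm decomposition
$$\bigwedge \left( M_{2n+1}^- \right) \cong \bigwedge \left( \bigwedge^2 V \right) = \bigoplus_{\underline a} H^-_{\underline a}(V)$$
is still indexed by strictly decreasing sequences $2n+1 > a_1 > \cdots > a_k > 0$, but the invariant-selection rule is now that $\dim (S_\lambda V)^{O(2n+1)} = 1$ precisely when every column of $\lambda$ has even length (and is zero otherwise), rather than every row as in the symplectic case. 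The problem reduces to counting how many $H^-_{\underline a}(V)$ satisfy this new constraint.

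The combinatorial heart of the argument is a characterization of the $\underline a$ for which the hook-nested shape $\lambda(\underline a)$ has all columns of even length. From the nested-hook construction one reads off that, for $j \leq k$, the $j$-th column of $\lambda(\underline a)$ has length $(j-1) + a_j$, while for $j > k$ its length equals the number of indices $i \leq k$ with $a_i \geq j - i$. The first family of conditions forces the parities of the $a_j$ to alternate starting with $a_1$ even, and a direct inspection of the tail columns shows that $a_{2i-1}$ and $a_{2i}$ contribute to the same columns only when $a_{2i-1} = a_{2i}+1$. Together, these constraints pin down $\underline a$ to be of the form $(b_1, b_1-1, b_2, b_2-1, \ldots, b_s, b_s-1)$ with $b_1 > b_2 > \cdots > b_s$ and each $b_i$ even; conversely, one checks that for any such sequence every pair $(b_i, b_i - 1)$ contributes $0$ or $2$ to each tail column, so all columns of $\lambda(\underline a)$ are even.

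The counting step is then immediate: the admissible sequences are parameterized by strictly decreasing tuples of even numbers $2n \geq b_1 > b_2 > \cdots > b_s \geq 2$, i.e.\ by arbitrary subsets of the $n$-element set $\{2, 4, \ldots, 2n\}$, which gives $2^n$ contributions (the empty subset accounting for the trivial one-dimensional summand). The main technical step is the bookkeeping in the middle paragraph, specifically verifying that the tail columns remain even only under the precise pairing condition; but this argument runs parallel to the symplectic analysis with the roles of rows and columns interchanged, and unlike the symplectic case no ``special'' sequence terminating at $1$ is needed, since here the smallest allowed $b_s$ is $2$, so the constraint $b_s - 1 \geq 1 > 0$ is automatic.
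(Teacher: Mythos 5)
Your proposal is correct and follows essentially the same route as the paper: the plethysm decomposition of $\bigwedge\left(\bigwedge^2 V\right)$ into the $H^-_{\underline a}(V)$, the even-column criterion for $O(2n+1)$-invariants, the resulting pairing $\underline a=(b_1,b_1-1,\ldots,b_s,b_s-1)$ with the $b_i$ even, and the count by subsets of $\{2,4,\ldots,2n\}$. You merely spell out the column-length bookkeeping that the paper leaves implicit, and your observation that no sequence terminating at $1$ arises here (unlike the symplectic case) is accurate.
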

The symmetric orthogonal case is very simple by previous considerations. We have $M_{2n+1}^+\simeq S^2(V)$. So by the decomposition \eqref{decsim}, to compute the dimension of invariants we have to consider sequences of type  $2n+1>b_1>b_1-1  \ldots >b_k>b_k-1> 0$, with each $b_i$ even or of type $2n+1>b_1>b_1-1  \ldots b_{k-1}>b_{k-1}-1>b_k=0$. So we have:
\begin{proposition}\label{dimult}
The dimension of the space of invariants $\left(\bigwedge M_{2n+1}^+\right)^G$ is $2^{n+1}$.
\end{proposition}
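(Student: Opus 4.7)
The plan is to mimic the combinatorial strategy used in Propositions~\ref{pep} and \ref{aldo}. Using the isomorphism $M_{2n+1}^+\simeq S^2V$ with $V=\CC^{2n+1}$, I would first invoke the plethysm decomposition \eqref{decsim}, which writes
$$\bigwedge\bigl[S^2V\bigr]\;=\;\bigoplus_{\underline a} S_{\lambda(\underline a)}(V),$$
where $\underline a$ ranges over strictly decreasing sequences $2n+1>a_1>a_2>\cdots>a_k\geq 0$. Since $G=O(2n+1)$, the classical criterion (see \cite{Pr2}) that $\dim (S_\lambda(V))^G=1$ exactly when every column of $\lambda$ has even length reduces the problem to counting those $\underline a$ whose nested hook diagram $\lambda(\underline a)$ has only even-length columns.

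The next step is to read off column lengths directly from the hook construction. The hook $h_{a_j}$ sits at position $(j,j)$ and contributes its vertical arm (of length $a_j+2$) to column $j$, while each outer hook $h_{a_i}$ with $i<j$ contributes one box in column $j$ precisely when $a_i+i\geq j$. Strict decrease of the $a_i$ makes this tidy: for $1\leq j\leq k$ every outer row reaches column $j$, giving column length $a_j+j+1$, while for $j>k$ the column length equals $|\{i\leq k:a_i+i\geq j\}|$.

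The crux is then the following structural identification. Setting $f_i:=a_i+i$ produces a weakly decreasing sequence with $f_i=f_{i+1}$ iff $a_{i+1}=a_i-1$. Evenness of column lengths for $j>k$ forces each distinct $f$-value strictly exceeding $k$ to have even multiplicity, which groups the indices into even-sized blocks of consecutive $a$'s differing by $1$; an odd trailing element is only possible if $f_k\leq k$, which pins down $a_k=0$. Combining with the parity condition $a_j+j$ odd coming from the $j\leq k$ columns forces $\underline a$ to take the form
$$(b_1,b_1-1,b_2,b_2-1,\ldots,b_s,b_s-1)\quad\text{or}\quad(b_1,b_1-1,\ldots,b_s,b_s-1,0),$$
with $b_1>b_2>\cdots>b_s$ even integers drawn from $\{2,4,\ldots,2n\}$.

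Finally, the count is immediate: strictly decreasing tuples of even integers in $\{2,4,\ldots,2n\}$ biject with subsets of an $n$-element set, yielding $2^n$ sequences of each type, hence $2^{n+1}$ invariants in total. The step I expect to be the real obstacle is the structural characterization in the third paragraph, specifically verifying cleanly that odd-multiplicity $f$-values are forbidden above $k$ and reconciling this condition with the parity constraint on the $j\leq k$ columns. Once that is in place, the column-length formula and the final counting are mechanical.
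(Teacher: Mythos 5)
Your proposal is correct and follows essentially the same route as the paper: the plethysm decomposition \eqref{decsim}, the even-columns criterion for $O(2n+1)$-invariants, the characterization of the admissible sequences as $(b_1,b_1-1,\ldots,b_s,b_s-1)$, possibly followed by a trailing $0$, with the $b_i$ even, and the resulting count $2^n+2^n=2^{n+1}$. The only difference is one of detail: the paper simply asserts this characterization ``by previous considerations,'' while you supply the column-length computation for the nested hooks that justifies it.
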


Furthermore we can compute the dimension of covariants.
\begin{proposition}
The dimension of the space $(\bigwedge (M_{2n+1}^-)^*\otimes M_{2n+1})^G$ (resp. $(\bigwedge (M_{2n+1}^+)^*\otimes M_{2n+1})^G$) is $n2^{n+1}$ (resp. $(2n+1)2^{n+1}$).
\begin{proof}
We will prove only the case $M_{2n+1}^-$, the other case is similar. The proof follows easily from Theorem \ref{ladim}. 

In fact we can reinterpret the orthogonal case from the point of view of the symplectic case. By transposing diagrams we are in the situation of the skewsymmetric symplectic case for $2n$ up to: 
\begin{enumerate}
\item subtract the contribution given by the case in which the first two columns have maximal length and we add two boxes to these. 
\item add the contribution given by the case in which the first row has maximal length and we add two boxes to this.
\end{enumerate}
So we have that the dimension is $n2^{n+1}-2^{n-1}+2^{n-1}=n2^{n+1}$.

\end{proof}
\end{proposition}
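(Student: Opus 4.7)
The strategy is to adapt the enumeration used in the proof of Theorem \ref{ladim} to the odd orthogonal setting, exploiting transposition of Young diagrams to translate ``invariance under $O(2n+1)$'' into a condition formally matching the one used in the symplectic argument. I would treat the skew-symmetric case $(\bigwedge(M_{2n+1}^-)^*\otimes M_{2n+1})^G$ in detail, as the author indicates the symmetric case is analogous.

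Setting $V = \mathbb{C}^{2n+1}$ and using the $GL(V)$-isomorphism $M_{2n+1} \cong V \otimes V$, the invariants decompose as
$$\bigoplus_{\underline a} \bigoplus_{\mu}\, \bigl(S_\mu(V)\bigr)^{O(V)},$$
where $\underline a$ runs over strictly decreasing sequences $2n+1>a_1>\cdots>a_k>0$ indexing the components $H^-_{\underline a}(V)$ of $\bigwedge(\bigwedge^2 V)$, and $\mu$ runs over the Young diagrams obtained from $\lambda(\underline a)$ by adding two boxes under iterated Pieri. For $O(V)$ the dimension $\dim(S_\mu(V))^{O(V)}$ equals $1$ if every column of $\mu$ has even length and $0$ otherwise (cf.\ the argument in Proposition \ref{aldo}).

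The crucial observation is that transposing Young diagrams interchanges rows and columns, hence turns the orthogonal condition ``every column even'' into the symplectic condition ``every row even'' used in the proof of Theorem \ref{ladim}. Moreover, transposition sends a skew hook $h^-_a$ (column length $a$, row length $a+1$) to a hook with column length $a+1$ and row length $a$, matching the shape nested in the symmetric plethysm. Consequently the orthogonal skew-symmetric enumeration on $V=\mathbb{C}^{2n+1}$ is in near-bijection with the symplectic skew-symmetric enumeration already carried out for $V'=\mathbb{C}^{2n}$, whose total was $(2n)\cdot 2^n = n\cdot 2^{n+1}$. I would reproduce, on the transposed diagrams, the same four-case split used there (according to whether $a_1=2n-1, 2n-2, 2n-3$ or smaller, and whether the added pair of boxes completes the nested structure to a diagram with the required parity).

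The two edge cases flagged by the author must then be tracked. The transposition overcounts by excluding configurations where, in the orthogonal picture, the first two columns already sit at the maximal length $2n+1$ and two boxes are appended there; this yields a correction of $-2^{n-1}$, counted by the same trick as in Proposition \ref{pep}. Conversely, the transposition undercounts configurations where the first row of the orthogonal diagram attains the maximal length $2n+1$ after adding two boxes, absent in the symplectic comparison; this gives a correction of $+2^{n-1}$. The corrections cancel and one obtains $n\cdot 2^{n+1}-2^{n-1}+2^{n-1}=n\cdot 2^{n+1}$, as stated. The symmetric case $(\bigwedge(M_{2n+1}^+)^*\otimes M_{2n+1})^G$ follows by running the same program with $\bigwedge(S^2V)$ in place of $\bigwedge(\bigwedge^2V)$ and using Proposition \ref{dimult} as the invariants input.

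The main obstacle is the bookkeeping to make the transposition correspondence line up exactly, since one must simultaneously track the shift in hook parameters and the change of ambient dimension from $2n+1$ to $2n$. Once this is set up cleanly, the parity counts for the ``even columns'' condition are literally those of Propositions \ref{pep} and \ref{aldo}, and the rest is the same arithmetic as in Theorem \ref{ladim}.
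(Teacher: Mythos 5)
Your proposal is correct and follows essentially the same route as the paper: reinterpret the odd orthogonal count via transposition of Young diagrams (turning the ``even columns'' condition for $O(V)$ into the ``even rows'' condition used in the symplectic case), reduce to the already-computed total $(2n)\cdot 2^n=n\cdot 2^{n+1}$ for $\mathbb{C}^{2n}$, and track the two boundary corrections $-2^{n-1}$ and $+2^{n-1}$, which cancel. Your version merely spells out the transposition bookkeeping in more detail than the paper's rather terse argument.
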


\subsection{Structure}
We proceed to describe explicitly of the space $B:=(\bigwedge (M_{2n+1}^-)^*\otimes M_{2n+1})^G$. We can see such algebra as a left module on the algebra $A:=(\bigwedge (M_{2n+1}^-)^*)^G$ which is, as an algebra, generated by the elements $Tr(St_{2k+1})$ and similarly to the symplectic case can be described as the exterior algebra.
\begin{proposition}
The algebra $A=(\bigwedge (M_{2n+1}^-)^*)^G$ is the exterior algebra in the elements $T_h=Tr(St_{4h+3})$, with $h=0,1,...,n-1$.
\end{proposition}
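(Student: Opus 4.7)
The plan is to follow the template of the symplectic case (Theorem \ref{ciccio}) almost verbatim, adapting the sign computations to the ordinary transposition. First I would invoke the first fundamental theorem of classical invariant theory for $O(2n+1)$: polynomial invariants on tuples of matrices are generated by traces of monomials in the matrices and their transposes. Restricted to skew-symmetric variables $Y_i^t=-Y_i$, every such monomial reduces (up to a sign) to one in the $Y_i$ alone. Multilinearizing, antisymmetrizing, and discarding the even-degree standard traces (which vanish by Rosset), $A$ is generated by $\mathrm{Tr}(St_{2h+1}(Y_1,\ldots,Y_{2h+1}))$ for $h\ge 0$.

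Next I would establish the direct analog of Proposition \ref{pippis}(1): $\mathrm{Tr}(St_{4k+1}(Y_1,\ldots,Y_{4k+1}))=0$ for every $k\ge 0$. The proof is the same sign-tracking argument as in the symplectic case. Using $\mathrm{Tr}(A)=\mathrm{Tr}(A^t)$ and
\[
St_{2h+1}(Y_1,\ldots,Y_{2h+1})^t=\sum_{\sigma\in S_{2h+1}}\epsilon_\sigma\,(-1)^{2h+1}\,Y_{\sigma(2h+1)}\cdots Y_{\sigma(1)},
\]
the order-reversing permutation on $2h+1$ letters has sign $(-1)^h$, so the equality
$\mathrm{Tr}(St_{2h+1})=(-1)^{h+1}\mathrm{Tr}(St_{2h+1})$
forces the vanishing whenever $h$ is even, i.e.\ whenever $2h+1\equiv 1\pmod 4$. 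The only generators of $A$ that can be nonzero are therefore the $T_h=\mathrm{Tr}(St_{4h+3})$.

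Then I would truncate the list of generators via Hutchinson's theorem mentioned in the introduction: $St_{4n}$ vanishes identically on $M_{2n+1}^-$. Since $St_{a+b}=St_a\wedge St_b$ (and the wedge of $0$ with anything is $0$), this forces $St_m=0$ on $M_{2n+1}^-$ for every $m\ge 4n$, so $T_h=0$ as soon as $4h+3\ge 4n$, i.e.\ $h\ge n$. Hence $A$ is generated by the $n$ odd-degree elements $T_0,\ldots,T_{n-1}$, which pairwise anticommute and square to zero in $\bigwedge(M_{2n+1}^-)^*$. Therefore $A$ is a quotient of an exterior algebra on $n$ generators, and $\dim A\le 2^n$. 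Comparing with the dimension $\dim A=2^n$ given by Proposition \ref{aldo}, the quotient map is an isomorphism.

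I expect no real obstacle; the only external input is Hutchinson's identity, used purely to bound the number of nonvanishing generators, and the sign computation is identical to the symplectic case up to the interpretation of ``transpose''.
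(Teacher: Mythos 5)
Your proof is correct and follows the paper's argument almost step for step: the sign computation reducing the generators to $T_h=\mathrm{Tr}(St_{4h+3})$ is exactly the paper's Proposition \ref{pippis2}, and the final dimension count against Proposition \ref{aldo} is the same. The one genuine divergence is the truncation step: the paper invokes the Amitsur--Levitzki theorem ($St_{4n+2}=0$ on all $(2n+1)\times(2n+1)$ matrices), whereas you invoke Hutchinson's identity ($St_{4n}=0$ on $M_{2n+1}^-$); both give $T_h=0$ for $h\ge n$, so your argument goes through. Be aware, though, that the paper later \emph{rederives} Hutchinson's identity as a corollary of Theorem \ref{tony4}, whose proof rests on this very proposition; your version is only non-circular because you take Hutchinson's 1974 result as external input, and it sacrifices the paper's ability to present that identity as a consequence of the structure theory. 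Using Amitsur--Levitzki here is therefore the cleaner choice.
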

\begin{proof}
By the proof of Proposition \ref{pippis2} and the Amitsur-Levitzki's Theorem we have that the elements $T_h$, $h=0,...,n-1$ generate the invariants.
Since the dimension is exactly $2^n$, we have our claim.
\end{proof}

To study covariants let us recall the by classical invariant theory we have that the space $B=(\bigwedge (M_{2n+1}^-)^*\otimes M_{2n+1})^G$ is generated as a module on $A=(\bigwedge (M_{2n+1}^-)^*)^G$ by the elements $Y^i$. So reasoning as in \ref{tony} and \ref{tony2} we can state the following.
\begin{teorem}\label{tony4}
\begin{enumerate}
 \item  As a algebra, $B$ is generated by $A$ and the element $Y$.
 \item   $B$ is a free module on the 
  the exterior algebra $A_{n-1}\subset A$ generated by $T_0,...,T_{n-2}$, with basis $1,Y,...,Y^{4n-1}$.
  
  With notations of Section 2 we can consider the canonical surjective homomorphism $\pi :A[t]\rightarrow B$. We have:
 \item  The Kernel of the homomorphism $\pi$ is the principal ideal generated by the element
 \begin{equation}\label{oddhutch}
 (2n+1)t^{4n-1}-\sum_{i=0}^{n-1} T_{n-i-1}\wedge t^{4i}, \end{equation}
 \end{enumerate}
 
  \end{teorem}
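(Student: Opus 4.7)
My plan is to imitate the proofs of Theorems \ref{tony} and \ref{tony2}, with the adjustments that the ambient matrix size is $2n+1$ (not $2n$) and the top basis power is $Y^{4n-1}$ (not $X^{4n-3}$). Part (1) will follow from the First Fundamental Theorem for $O(2n+1)$: $G$-equivariant polynomial maps $(M_{2n+1}^-)^m \to M_{2n+1}$ are generated by the coordinate matrices and traces of their monomials, so after antisymmetrization $B$ is generated over $A$ by $Y$. For Part (2), the preceding dimension formula $\dim B = n\cdot 2^{n+1} = 4n \cdot \dim A_{n-1}$ reduces everything to showing $A_{n-1}$-linear independence of $1, Y, Y^2, \ldots, Y^{4n-1}$. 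I will argue exactly as in Theorem \ref{tony}: given $\sum_j P_j \wedge Y^j = 0$ with $P_j \in A_{n-1}$ not all zero, let $j_0$ be minimal with $P_{j_0}\neq 0$, multiply by $Y^{4n-1-j_0}$ and apply $\mathrm{Tr}$. Every trace $\mathrm{Tr}(Y^k)$ with $k > 4n-1$ vanishes (by Rosset for $k$ even, by the odd-orthogonal analog of Proposition \ref{pippis2} for $k\equiv 1\bmod 4$, and by Amitsur--Levitzki --- which forces $T_m = 0$ for $m\geq n$ --- for $k\equiv 3\bmod 4$), while $\mathrm{Tr}(Y^{4n-1}) = T_{n-1}$. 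This gives $P_{j_0}\wedge T_{n-1} = 0$ in $A = A_{n-1} \oplus A_{n-1}\wedge T_{n-1}$, contradicting $P_{j_0}\neq 0$.

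For Part (3), I first verify that the displayed element lies in $\ker\pi$. By Part (2), the element $T_{n-1}\in B$ has a unique expansion $T_{n-1} = \sum_h P_h \wedge Y^h$ with $P_h \in A_{n-1}$. The decomposition $B = B^+ \oplus B^-$ forces $P_h = 0$ for $h\equiv 1,2\bmod 4$ (since $T_{n-1}$ is $M^+$-valued and $Y^h$ is $M^+$-valued precisely for $h\equiv 0,3\bmod 4$). Multiplying the expansion by $Y^{4n-1-j_0}$ and taking traces then determines the remaining coefficients by induction on $j_0$, exactly as in Theorem \ref{tony2}: the case $j_0 = 4n-1$ gives $P_{4n-1} = 2n+1$; the case $j_0 = 0$ gives $P_0 = 0$ (since the left-hand trace is $T_{n-1}\wedge T_{n-1} = 0$); the cases $j_0 \equiv 3\bmod 4$ yield $P_{4m+3} = 0$ for $0\leq m\leq n-2$ by induction; and the cases $j_0 \equiv 0\bmod 4$ yield $P_{4i} = -T_{n-i-1}$ for $1\leq i\leq n-1$ by induction. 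Rearranging produces precisely the relation displayed in the theorem, so the element lies in $\ker\pi$.

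Finally, to show the principal ideal $J$ generated by this element equals $\ker\pi$, I multiply the relation by $t$ on the left and on the right; since $T_i\, t = -t\, T_i$ in $A[t]$, adding the two identities yields $2(2n+1)\, t^{4n} \in J$ and hence $t^{4n} \in J$ (which is Hutchinson's identity inside $B$). The relation itself expresses $T_{n-1}$ in terms of $t$ and of $T_0, \ldots, T_{n-2}$, so $A[t]/J$ is spanned over $A_{n-1}$ by $\{1, t, \ldots, t^{4n-1}\}$; therefore $\dim A[t]/J \leq 4n\cdot 2^{n-1} = \dim B$. Since $J\subseteq \ker\pi$ forces the reverse inequality, we conclude $J = \ker\pi$. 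The main obstacle I expect is the sign bookkeeping in the trace identities that determine the $P_h$ --- in particular the graded-commutation between $T_{n-1}$ and the $Y^j$ inside $B$ --- but because the template has already been fully executed in Theorem \ref{tony2}, no essentially new argument is needed beyond replacing the constant $2n$ by $2n+1$ and shifting the top power accordingly.
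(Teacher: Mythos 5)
Your proposal is correct and follows exactly the route the paper intends: the paper gives no separate argument for Theorem \ref{tony4} beyond the remark that one should reason as in Theorems \ref{tony} and \ref{tony2}, and your adaptation (linear independence of $1,Y,\dots,Y^{4n-1}$ over $A_{n-1}$ via multiplication by $Y^{4n-1-j_0}$ and traces, determination of the coefficients $P_h$ in the expansion of $T_{n-1}$, and the left/right multiplication by $t$ to get $t^{4n}\in J$ followed by the dimension count $4n\cdot 2^{n-1}=n2^{n+1}=\dim B$) is precisely that argument with $2n$ replaced by $2n+1$ and the top power shifted to $4n-1$. The vanishing statements you invoke ($\mathrm{Tr}(Y^{4n})=0$ by Rosset, $\mathrm{Tr}(Y^{4n+1})=0$ by the odd analogue of Proposition \ref{pippis2}, and $Y^k=0$ for $k\geq 4n+2$ by Amitsur--Levitzki) are exactly the ones needed, so no gap remains.
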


Remark that multiplying by $t$ the generator in \eqref{oddhutch}, we obtain an identity between an element of even degree $t^{4n}$ and elements of odd degree $t^{4i+1}$. By the relation $T_{n-i-1}\wedge t^{4i+1}=-t^{4i+1}\wedge T_{n-i-1}$, we can deduce $t^{4n}=0$. We have recovered a well known result of Hutchinson (see \cite{Hu}):

\begin{proposition}
The skew-symmetric standard polynomial $St_{4n}(x_1,...,x_{4n})$ is zero on the space $M_{2n+1}^-$.
\end{proposition}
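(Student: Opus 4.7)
The plan is to extract the vanishing of $St_{4n}$ directly from the description of $\mathrm{Ker}(\pi)$ given in Theorem \ref{tony4}, by exactly the same left/right multiplication trick already used in Section 1 (and for the symplectic case) to deduce the Amitsur--Levitzki and Rowen identities from the structure theorems. So the statement should be essentially formal once \eqref{oddhutch} is in hand.

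Concretely, I would start from the generator
\[
F := (2n+1)t^{4n-1}-\sum_{i=0}^{n-1} T_{n-i-1}\wedge t^{4i}\in\mathrm{Ker}(\pi),
\]
and form the two elements $tF$ and $Ft$ of $\mathrm{Ker}(\pi)$. Each $T_{h}$ sits in odd degree $4h+3$, so in the graded superalgebra $A[t]$ one has $T_{h}t=-tT_{h}$; this is the only real ingredient. Hence
\[
tF=(2n+1)t^{4n}+\sum_{i=0}^{n-1} T_{n-i-1}\,t^{4i+1},\qquad
Ft=(2n+1)t^{4n}-\sum_{i=0}^{n-1} T_{n-i-1}\,t^{4i+1},
\]
and adding gives $2(2n+1)\,t^{4n}\in\mathrm{Ker}(\pi)$, so $t^{4n}\in\mathrm{Ker}(\pi)$ (we are in characteristic $0$).

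Applying the homomorphism $\pi:A[t]\to B$ then yields $Y^{4n}=0$ in $B$. By Proposition 1.1 the wedge power $Y^{4n}$ is, as a multilinear map on $4n$ variables in $M_{2n+1}^{-}$, precisely the standard polynomial $St_{4n}(x_1,\ldots,x_{4n})$ evaluated in $M_{2n+1}^{-}$ (with values in $M_{2n+1}$). The vanishing $Y^{4n}=0$ therefore says exactly that $St_{4n}(x_1,\ldots,x_{4n})=0$ for all skew-symmetric $(2n+1)\times(2n+1)$ matrices $x_1,\ldots,x_{4n}$, which is the claim.

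There is essentially no obstacle beyond verifying that the parity bookkeeping works out: one must check that $t^{4n}$ is even while each $T_{n-i-1}t^{4i+1}$ is odd (so the left- and right-multiplication produce an identical even term and opposite odd terms, allowing the cancellation), and that the generator $F$ is indeed a kernel element of the form stated in Theorem \ref{tony4}. Both are immediate from the definitions. The only mildly delicate point, as in the symplectic/Rowen case, is to ensure that the relation \eqref{oddhutch} really is the precise one producing $t^{4n}$ after the symmetrization, rather than a multiple; the constant $2(2n+1)$ is invertible, so no difficulty arises.
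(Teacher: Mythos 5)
Your proof is correct and follows essentially the same route as the paper: the remark preceding the proposition derives $t^{4n}\in\mathrm{Ker}(\pi)$ from the generator \eqref{oddhutch} by multiplying by $t$ and using $T_{n-i-1}\wedge t^{4i+1}=-t^{4i+1}\wedge T_{n-i-1}$, exactly the left/right symmetrization trick you carry out, and then identifies $Y^{4n}$ with $St_{4n}$ restricted to $M_{2n+1}^-$. Your write-up is just a slightly more explicit version of the paper's argument, including the observation that the constant $2(2n+1)$ is invertible.
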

From the Proposition \ref{pippis2} we also get: 

\begin{corollary}\begin{enumerate}
\item The algebra $(\bigwedge (M_{2n+1}^-)^*\otimes M_{2n+1}^-)^G$ is a free module on the exterior algebra $A_{n-1}^-$ with basis $Y,Y^2,Y^5,Y^6,...,Y^{4n-3},Y^{4n-2}$. 
\item The algebra $(\bigwedge (M_{2n+1}^-)^*\otimes M_{2n+1}^+)^G$ is a free module on the exterior algebra $A_{n-1}^-$ with basis $1,Y^3,Y^4,Y^7,...,Y^{4n-4},Y^{4n-1}$.
\end{enumerate}
\end{corollary}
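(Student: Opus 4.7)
The plan is to deduce the Corollary directly from Theorem~\ref{tony4} together with a parity argument identifying which summand each power $Y^k$ lies in. First, since $G=O(2n+1)$ commutes with the usual transposition, the decomposition $M_{2n+1}=M_{2n+1}^+\oplus M_{2n+1}^-$ is $G$-stable; tensoring with $\bigwedge(M_{2n+1}^-)^*$ and taking $G$-invariants yields
$$B=\bigl(\bigwedge(M_{2n+1}^-)^*\otimes M_{2n+1}\bigr)^G=B_+\oplus B_-,\qquad B_\pm:=\bigl(\bigwedge(M_{2n+1}^-)^*\otimes M_{2n+1}^\pm\bigr)^G.$$

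Second, I would run the transposition argument from Proposition~\ref{pippis2} in the present setting. For $y_1,\ldots,y_k\in M_{2n+1}^-$ one has $y_i^t=-y_i$, so $(y_1\cdots y_k)^t=(-1)^k y_k\cdots y_1$; combining with the sign $(-1)^{k(k-1)/2}$ of the reversal permutation gives
$$St_k(y_1,\ldots,y_k)^t=(-1)^{k(k+1)/2}\,St_k(y_1,\ldots,y_k).$$
Hence $Y^k$ takes values in $M_{2n+1}^+$ precisely when $k\equiv 0,3\pmod 4$ and in $M_{2n+1}^-$ precisely when $k\equiv 1,2\pmod 4$.

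Third, since $A_{n-1}\subset A=(\bigwedge(M_{2n+1}^-)^*)^G$ consists of scalar-valued invariants, multiplication by $A_{n-1}$ preserves the direct sum $B_+\oplus B_-$. By Theorem~\ref{tony4}, $\{1,Y,Y^2,\ldots,Y^{4n-1}\}$ is a free $A_{n-1}$-basis of $B$, so partitioning it according to the congruence class of $k$ modulo~$4$ yields free $A_{n-1}$-bases of each summand. These partitions are exactly $\{Y,Y^2,Y^5,Y^6,\ldots,Y^{4n-3},Y^{4n-2}\}$ for $B_-$ and $\{1,Y^3,Y^4,Y^7,\ldots,Y^{4n-4},Y^{4n-1}\}$ for $B_+$, each of cardinality $2n$, exactly as claimed.

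No substantial obstacle arises here: the statement is essentially a bookkeeping corollary of Theorem~\ref{tony4}. The only subtlety worth attention is ensuring that in the transposition calculation the sign $(-1)^k$ coming from $y_i^t=-y_i$ and the sign $(-1)^{k(k-1)/2}$ from the reversal are correctly combined to give the clean formula $(-1)^{k(k+1)/2}$; after that, the two desired bases drop out by inspection, and a sanity check on cardinalities confirms that each summand has $2n\cdot\dim A_{n-1}=n\cdot 2^n$ dimension, so the two summands together recover the dimension $n\cdot 2^{n+1}$ of $B$ computed earlier.
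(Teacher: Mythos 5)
Your proposal is correct and follows essentially the same route as the paper, which obtains the corollary by combining the free $A_{n-1}$-basis $1,Y,\ldots,Y^{4n-1}$ of Theorem~\ref{tony4} with the parity statement of Proposition~\ref{pippis2} (whose proof is exactly your transposition/reversal sign computation, here adapted from the symplectic to the usual transposition). Your explicit verification of the sign $(-1)^{k(k+1)/2}$, of the $G$-stability of the splitting, and of the cardinality/dimension count just spells out what the paper leaves implicit.
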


Let us pass to the symmetric case. We can describe easily the structure of invariants. By classical invariant theory we have that this algebra is generated by the elements $Tr(St_{2k+1})$, but by Propositions \ref{pippis}, \ref{dimult} and Amitsur-Levitzki we have:
\begin{lemma}
The algebra of invariants $A:=\left(\bigwedge (M_{2n+1}^+)^* \right)^G$ is the exterior algebra in the elements $T_0,...,T_n$ ($T_i:=Tr(St_{4i+1})$).
\end{lemma}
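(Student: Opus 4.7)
The plan is to follow the strategy used to establish Theorem~\ref{ciccio} in the symplectic symmetric case and the analogous statement for the skew-symmetric odd orthogonal case: first identify a generating set using classical invariant theory plus an antisymmetrization argument, then cut this set down using a symmetry/vanishing computation on traces of standard polynomials, then use Amitsur--Levitzki to bound the indices, and finally compare dimensions.

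First, by the First Fundamental Theorem for the orthogonal group, the polynomial invariants on $M_{2n+1}$ are generated by traces of monomials in the variables and their transposes; since $M_{2n+1}^+$ is $G$-stable and $G$ is reductive, every invariant polynomial on $M_{2n+1}^+$ extends to $M_{2n+1}$. Multilinearizing and antisymmetrizing (and using that on symmetric matrices the transpose acts trivially on each factor) one obtains that $A$ is generated, as an algebra, by the elements $Tr(St_h(x_1,\dots,x_h))$ with $x_i\in M_{2n+1}^+$.

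Next I would run the argument of Proposition~\ref{pippis}/\ref{pippis2} in the symmetric orthogonal setting. Since $x_i=x_i^t$, one has
$$St_h(x_1,\dots,x_h)^t=\sum_{\sigma\in S_h}\epsilon_\sigma x_{\sigma(h)}\cdots x_{\sigma(1)}=(-1)^{h(h-1)/2}St_h(x_1,\dots,x_h),$$
because the order-reversal permutation has sign $(-1)^{h(h-1)/2}$. Taking traces, $Tr(St_h)=(-1)^{h(h-1)/2}Tr(St_h)$, so the trace vanishes unless $h\equiv 0,1\pmod 4$. Combining this with Rosset's vanishing $Tr(St_{2h})=0$, the only surviving degrees are $h\equiv 1\pmod 4$, i.e.\ the generators $T_i=Tr(St_{4i+1})$.

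It remains to bound the index $i$ and compare dimensions. By the Amitsur--Levitzki theorem, $St_r$ vanishes identically on $M_{2n+1}$ for $r\ge 4n+2$, so $T_i=0$ for $4i+1\ge 4n+2$, i.e.\ for $i\ge n+1$. Hence $A$ is generated by the $n+1$ elements $T_0,\dots,T_n$, all of odd degree, so they anticommute and square to zero, realizing $A$ as a quotient of the exterior algebra $\wedge(T_0,\dots,T_n)$, which has dimension $2^{n+1}$. By Proposition~\ref{dimult} we have $\dim A=2^{n+1}$, so the surjection is an isomorphism and $A=\wedge(T_0,\dots,T_n)$.

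The routine steps are the invariant-theory generation and the sign computation; the only real input is the dimension count of Proposition~\ref{dimult}, which forces the surjection from the free exterior algebra to be an isomorphism. No essential obstacle arises because the symmetric orthogonal case parallels the symplectic symmetric case, with the only change being that here $h\equiv 1\pmod 4$ (rather than $h\equiv 3\pmod 4$) selects the surviving generators, which is why the index range extends up to $n$ rather than $n-1$ and gives $2^{n+1}$ in place of $2^n$.
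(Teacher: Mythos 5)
Your proof is correct and follows essentially the same route as the paper, which derives the lemma by citing classical invariant theory for the generating set $Tr(St_{2k+1})$, the sign/vanishing argument of Proposition \ref{pippis}, the Amitsur--Levitzki bound, and the dimension count of Proposition \ref{dimult}. The only difference is that you spell out the details the paper leaves implicit.
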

So we can describe the covariants $B:=(\bigwedge (M_{2n+1}^+)^*\otimes M_{2n+1})^G$.
\begin{teorem}\label{tony5}
\begin{enumerate}
 \item  As a algebra $B$ is generated by $A$ and the element $X$.
 \item   $B$ is a free module on the 
  the exterior algebra $A_{n-1}\subset A$ generated by $T_0,...,T_{n-1}$, with basis $1,X,...,X^{4n+1}$.
  
  With notations of Section 2 we can consider the canonical surjective homomorphism $\pi :A[t]\rightarrow B$. We have:
 \item  The Kernel of the homomorphism $\pi$ is the principal ideal generated by the element
 \begin{equation}\label{ultid}
 (2n+2)t^{4n+1}-\sum_{i=0}^{n} T_{n-i}\wedge t^{4i+2}, \end{equation}
\end{enumerate}

\begin{proof}
We need to prove only the third part. By the analog of the general formula \eqref{genmat} we have
$$
1\wedge T(4n+1)=T(4n+1)=-\sum_{i=1}^{2n+1}Z^{2i} \wedge  T(2(2n+2-i)-1) +(2n+2)Z^{4n+1},
$$
so we deduce $i$ is odd and we have:
$$T_n=-\sum_{i=0}^{n}X^{4i+2}\wedge T_{n-i}+(2n+2)X^{4n+1},$$
which is our claim.
\end{proof}

\end{teorem}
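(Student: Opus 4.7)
My plan is to treat the three claims of the theorem in turn, following the template set by Theorems \ref{tony} and \ref{tony2} for the symplectic symmetric case and the closely analogous Theorem \ref{tony4} for the odd orthogonal skew case. The generation claim (1) is immediate from classical invariant theory: the First Fundamental Theorem for $O(2n+1)$, together with multilinearization and antisymmetrization, writes every element of $B$ as a polynomial in the $T_h$'s (which lie in $A$) and the inclusion map $X$, exactly as in the proof of Theorem \ref{tony}(1).

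For the freeness claim (2), I would compare dimensions. The preceding dimension computation gives $\dim B = (2n+1) 2^{n+1}$, while $\dim A_{n-1} = 2^n$. Since $(4n+2) \cdot 2^n = (2n+1) 2^{n+1} = \dim B$, it suffices to prove that $1, X, \ldots, X^{4n+1}$ are linearly independent over $A_{n-1}$. I would assume a relation $\sum_{h=0}^{4n+1} P_h \wedge X^h = 0$ with $P_h \in A_{n-1}$ not all zero, pick $j$ minimal with $P_j \neq 0$, wedge on the right by $X^{4n+1-j}$ and apply the trace map. All contributions with $h > j$ have $X$-exponent at least $4n+2$ and vanish by Amitsur--Levitzki applied to the ambient $M_{2n+1}$ (which gives $X^{4n+2} = 0$ in $B$), while the surviving $h=j$ term yields $P_j \wedge T_n = 0$. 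Since $A \cong A_{n-1} \otimes \wedge(T_n)$, multiplication by $T_n$ is injective on $A_{n-1}$, forcing $P_j = 0$ and contradicting the choice of $j$.

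For the kernel description (3), I would first check that the explicit element $\Psi := (2n+2) t^{4n+1} - \sum_{i=0}^{n} T_{n-i} \wedge t^{4i+2}$ lies in $\ker \pi$ by specializing a universal Cayley--Hamilton type identity for $M_{2n+1}$ (the analog of \eqref{genmat}) to the subspace $M_{2n+1}^+$, where the odd-indexed traces $\mathrm{Tr}(St_{4k+3})$ vanish by the transposition-invariance argument of Proposition \ref{pippis}. Setting $J := (\Psi) \subseteq \ker \pi$, I would then establish the reverse inclusion by bounding $\dim A[t]/J \leq \dim B$. Two reductions are needed: multiplying $\Psi$ on the left and right by $t$ and adding, the graded-commutation rule $tT_i = -T_i t$ yields $2(2n+2) t^{4n+2} \in J$, whence $t^h = 0$ in $A[t]/J$ for $h \geq 4n+2$; and the defining relation of $J$, together with its $t$-multiples, lets one trade every monomial involving $T_n$ for an $A_{n-1}$-linear combination of pure $t$-powers. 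Once these reductions are in place, $A[t]/J$ is spanned as an $A_{n-1}$-module by $1, t, \ldots, t^{4n+1}$, giving the required bound.

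The main obstacle lies in Part (3), specifically in verifying that the single generator $\Psi$ (through its $t$-multiples and the super-commutation of $A[t]$) really does suffice to eliminate every $T_n$-monomial from the $A_{n-1}$-module $A[t]/J$. The high-degree cases $T_n \wedge t^h$ with $h \geq 2$ follow directly from the defining relation, but the low-degree cases $h = 0, 1$ require combining several derived relations; equivalently, one must check that the induced surjection $A_{n-1}[t]/(t^{4n+2}) \twoheadrightarrow A[t]/J$ is an isomorphism, so that no further generator of $\ker \pi$ is needed. A secondary technicality is the precise derivation of the universal matrix identity on $M_{2n+1}$ whose restriction to $M_{2n+1}^+$ produces exactly the coefficients appearing in $\Psi$.
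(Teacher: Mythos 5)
Your treatment of parts (1) and (2) is correct and coincides with what the paper does: the paper simply declares that ``we need to prove only the third part,'' leaving the dimension count $(4n+2)\cdot 2^n=(2n+1)2^{n+1}=\dim B$ and the trace-trick linear independence implicit, and your version (using $X^{4n+2}=0$ from Amitsur--Levitzki on $M_{2n+1}$ and the factorization $A\cong A_{n-1}\otimes\wedge(T_n)$) is exactly the argument of Theorem \ref{tony}. For part (3) your strategy is also the paper's --- show the generator lies in $\ker\pi$ via the universal identity, then bound $\dim A[t]/J$ --- but the ``main obstacle'' you flag, namely eliminating $T_n\cdot t^0$ and $T_n\cdot t^1$ from the spanning set, is not a technicality you left unfinished: it is unfixable for the element \eqref{ultid} as written, and it points to an error in the paper. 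The element $(2n+2)t^{4n+1}-\sum_{i=0}^{n}T_{n-i}\wedge t^{4i+2}$ is not homogeneous, since $\deg\bigl(T_{n-i}\wedge t^{4i+2}\bigr)=4(n-i)+1+4i+2=4n+3$ while $\deg t^{4n+1}=4n+1$. As $\pi$ is a graded homomorphism, $\ker\pi$ is a graded ideal, so membership of \eqref{ultid} in $\ker\pi$ would force $t^{4n+1}\in\ker\pi$, i.e.\ $X^{4n+1}=0$, contradicting part (2). (Equivalently, taking traces of the degree-$(4n+1)$ component of the paper's derived identity gives $(2n+1)T_n=(2n+2)T_n$, hence $T_n=0$, which is false.)

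The source of the error is the displayed ``analog of \eqref{genmat}'' in the paper's proof, which is the universal identity for $M_{2n+2}$ rather than $M_{2n+1}$. The correct identity, read off from Theorem \ref{Procesicov}(2) with matrix size $2n+1$, is $T(4n+1)=(2n+1)Z^{4n+1}-\sum_{i=1}^{2n}Z^{2i}\wedge T(4n+1-2i)$; restricted to $M_{2n+1}^{+}$ the surviving traces are those with $4n+1-2i\equiv 1 \pmod 4$, i.e.\ $i$ \emph{even} (not odd as the paper asserts), and writing $i=2j$ one obtains $T_n=(2n+1)X^{4n+1}-\sum_{j=1}^{n}X^{4j}\wedge T_{n-j}$. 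The kernel generator should therefore be $(2n+1)t^{4n+1}-\sum_{j=0}^{n}T_{n-j}\wedge t^{4j}$, homogeneous of degree $4n+1$ and matching the pattern of Theorems \ref{tony2}, \ref{tony3} and \ref{tony4}. With this corrected generator the $j=0$ term is $T_n$ itself, so the relation (and its right $t$-multiples) rewrites every $a\,T_n t^{k}$ with $a\in A_{n-1}$ as an $A_{n-1}$-combination of $1,t,\dots,t^{4n+1}$, and your spanning bound $\dim A[t]/J\le(4n+2)2^{n}=\dim B$ closes exactly as in Theorem \ref{tony2}. In short: your plan is the right one and becomes a complete proof once the generator is corrected, but for the generator as printed part (3) is false and no argument can close the gap you identified.
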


\begin{corollary}
\begin{enumerate}
\item The algebra $(\bigwedge (M_{2n+1}^+)^*\otimes M_{2n+1}^-)^G$ is a free left module on the exterior algebra $A_{n-1}$, with basis $X^2,X^3,...,X^{4n-2},X^{4n-1}$. 
\item The algebra $(\bigwedge (M_{2n+1}^+)^*\otimes M_{2n+1}^+)^G$ is a free left module on the exterior algebra $A_{n-1}$, with basis $1,X,...,X^{4n},X^{4n+1}$.
\end{enumerate}
\end{corollary}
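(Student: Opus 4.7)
The plan is to derive this corollary as an almost immediate consequence of Theorem \ref{tony5} combined with a parity observation analogous to Proposition \ref{pippis}. Since the decomposition $M_{2n+1}=M_{2n+1}^+\oplus M_{2n+1}^-$ is $G$-stable (where $G=O(2n+1)$), we obtain
$$B=\left(\bigwedge (M_{2n+1}^+)^*\otimes M_{2n+1}^+\right)^G\oplus \left(\bigwedge (M_{2n+1}^+)^*\otimes M_{2n+1}^-\right)^G,$$
and the task reduces to sorting the $A_{n-1}$-basis $1,X,X^2,\ldots,X^{4n+1}$ of $B$ into the two summands according to whether $X^k$ takes values in $M_{2n+1}^+$ or in $M_{2n+1}^-$.

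First I would recall that $X^k$, viewed as a multilinear antisymmetric function, equals the standard polynomial $St_k(x_1,\ldots,x_k)$. Evaluating at symmetric matrices $x_i^t=x_i$ and transposing, one gets
$$St_k(x_1,\ldots,x_k)^t=\sum_{\sigma\in S_k}\epsilon_\sigma\,x_{\sigma(k)}^t\cdots x_{\sigma(1)}^t =\sum_{\sigma\in S_k}\epsilon_\sigma\, x_{\sigma(k)}\cdots x_{\sigma(1)}=(-1)^{k(k-1)/2}St_k(x_1,\ldots,x_k),$$
where the sign $(-1)^{k(k-1)/2}$ is the sign of the reversal permutation. Hence $X^k$ is symmetric (lies in $M_{2n+1}^+$) exactly when $k(k-1)/2$ is even, i.e.\ $k\equiv 0,1\pmod 4$, and skew-symmetric (lies in $M_{2n+1}^-$) exactly when $k\equiv 2,3\pmod 4$. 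This is the direct analog of Proposition \ref{pippis} in the present setting.

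Finally, I would partition the basis $\{1,X,\ldots,X^{4n+1}\}$ accordingly: the congruence classes $0,1\pmod 4$ in the range $0\le k\le 4n+1$ give $\{1,X,X^4,X^5,\ldots,X^{4n},X^{4n+1}\}$, while the classes $2,3\pmod 4$ give $\{X^2,X^3,X^6,X^7,\ldots,X^{4n-2},X^{4n-1}\}$. Since the whole collection is a free $A_{n-1}$-basis of $B$ by Theorem \ref{tony5}, each sub-collection is automatically a free $A_{n-1}$-basis of its respective direct summand, yielding both statements of the corollary. The only step requiring a short calculation is the transposition identity for $St_k$, which is the expected obstacle but is entirely routine.
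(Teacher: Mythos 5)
Your proposal is correct and follows essentially the same route as the paper: the corollary is the direct analog of the one following Theorem \ref{tony2}, obtained by combining the free-basis statement of Theorem \ref{tony5} with the parity computation of Proposition \ref{pippis} (the sign $(-1)^{k(k-1)/2}$ of the reversal permutation sorting $X^k$ into the $\pm$ eigenspaces of the transposition according to $k\bmod 4$). Your transposition identity for $St_k$ and the resulting partition of $\{1,X,\ldots,X^{4n+1}\}$ match the paper's intended argument exactly.
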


\section{Nonclassical cases}
\subsection{The case $\mathfrak{so}(2n)/\mathfrak{so}(2n-1)$}
In this part of the work we want to investigate the remaining cases which we have discussed in the section 1.2.3. In this case $\mathfrak{g}=\mathfrak{k}\oplus \mathfrak{p}$, where $\mathfrak{g}=\mathfrak{so}(2n),\ \mathfrak{k}=\mathfrak{so}(2n-1)$ and $\mathfrak{p}=\mathbb{C}^{2n-1}$. Let us start considering the invariants of the the space 
$$ \bigwedge \left( \frac{\mathfrak{so}(2n)}{\mathfrak{so}(2n-1)} \right)^*, $$
under di action of $\mathfrak{so}(2n-1)$ (acting by derivation), we will denote this space by $A$. If we denote $V:=\mathbb{C}^{2n-1}$ we have that the action of $\mathfrak{so}(2n-1)$ is the natural action on $\wedge V$.

From the point of view of the classical invariant theory, we have to look at multilinear skew-symmetric invariants under simultaneous conjugation of the group $K=SO(2n-1)$. By the FFT we deduce they are the exterior algebra in the element $p=[v_1,...,v_{2n-1}]$, i.e., the determinant.

Let us investigate the space of covariants. We have:
$$ \left(\bigwedge\mathfrak{p}^*\otimes \mathfrak{k}\right)^{K}\simeq \left(\bigwedge (V)^*\otimes \bigwedge^2 V\right)^K, $$
we can deduce easily that this space has dimension 2. 
Then we can consider in $\left(\bigwedge^{2n-3} (V)\otimes \bigwedge^2 V\right)^*$ the unique (up to scalar) element $p=[v_1,...,v_{2n-1}]$ and we take the corresponding covariant element:
$$\omega_1=\sum [v_1,...v_{2n-3},e_i,e_j]e_i\wedge e_j\in \left(\bigwedge^{2n-3} (V)^*\otimes \bigwedge^2 V\right)^K.$$
On the other hand if we consider in $\left(\bigwedge^{2} (V)\otimes \bigwedge^2 V\right)^*$
the element $([x_1,x_2],[x_3,x_4])$, where $x_i\in \mathfrak{so}(2n)$ we have the corresponding element $\omega_2\in \left(\bigwedge^{2} (V)^*\otimes \bigwedge^2 V\right)^K.$

So we have:
\begin{teorem}
The space $B^+:=\left(\bigwedge\mathfrak{p}\otimes\mathfrak{k}\right)^K$ is the vector space of dimension $2$ with basis (over $\mathbb{C}$) $\omega_1,\omega_2$.
\end{teorem}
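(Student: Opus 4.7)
The plan is to compute the dimension of $B^+$ by decomposing $\bigwedge V^*$ into $K$-irreducibles, verify that only two graded pieces can contribute, and then check that the explicit elements $\omega_1$ and $\omega_2$ are nonzero in those two pieces.

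First I would write
\[
B^+=\bigoplus_{k=0}^{2n-1}\bigl(\textstyle\bigwedge^{k}V^{*}\otimes\bigwedge^{2}V\bigr)^{K},
\]
and invoke the classical fact that for $K=SO(2n-1)$ the fundamental exterior powers $\bigwedge^{k}V$ ($0\le k\le 2n-2$) are irreducible and pairwise non-isomorphic, with the Hodge duality isomorphism $\bigwedge^{k}V\cong\bigwedge^{2n-1-k}V$. In particular $\mathfrak k\simeq\bigwedge^{2}V$ is irreducible. By Schur's lemma the $k$-th graded piece of $B^{+}$ has dimension equal to the multiplicity of $\bigwedge^{2}V$ in $\bigwedge^{k}V$, and this is $1$ precisely when $k=2$ or $k=2n-3$, and $0$ otherwise. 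Summing, $\dim B^{+}=2$.

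Next I would check that $\omega_1$ and $\omega_2$ are nonzero. For $\omega_{1}\in(\bigwedge^{2n-3}V^{*}\otimes\bigwedge^{2}V)^{K}$, evaluating on any $(2n-3)$-tuple $v_{1},\ldots,v_{2n-3}$ of linearly independent vectors produces the $2$-vector $\sum_{i<j}\det(v_{1},\ldots,v_{2n-3},e_{i},e_{j})\,e_{i}\wedge e_{j}$, which is nonzero because the determinant form $p$ is nondegenerate on any complementary pair of basis vectors. For $\omega_{2}\in(\bigwedge^{2}V^{*}\otimes\bigwedge^{2}V)^{K}$, nonvanishing follows from the fact that the bilinear pairing $(x,y)\mapsto\bigl([x_{1},x_{2}],[x_{3},x_{4}]\bigr)$ coming from the Lie bracket $\mathfrak p\otimes\mathfrak p\to\mathfrak k$ together with the Killing form on $\mathfrak k$ is not identically zero: for any pair of independent root vectors $e_{i},e_{j}\in\mathfrak p$ the bracket $[e_{i},e_{j}]\in\mathfrak k$ is a nonzero root vector and the Killing form on $\mathfrak k$ is nondegenerate, so one can find a quadruple giving a nonzero value.

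Finally, since $\omega_{1}$ and $\omega_{2}$ live in different graded components of $B^{+}$ (degrees $2n-3$ and $2$ respectively, which differ because $n\ge 2$), they are automatically linearly independent; together with the dimension count this shows that they form a basis of $B^{+}$.

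The only step with any real content is the representation-theoretic input: the irreducibility of $\bigwedge^{k}V$ under $SO(2n-1)$ and the Hodge duality $\bigwedge^{k}V\cong\bigwedge^{2n-1-k}V$. Everything else is either a direct consequence of Schur's lemma or a concrete verification that the two explicit covariants are nonzero.
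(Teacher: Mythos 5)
Your proposal is correct and follows essentially the same route as the paper: identify the two contributing graded pieces of $\left(\bigwedge V^*\otimes\bigwedge^2 V\right)^K$ and exhibit the explicit covariants $\omega_1,\omega_2$ in degrees $2n-3$ and $2$. The paper merely asserts that the dimension is ``easily'' seen to be $2$; your Schur's-lemma count using the irreducibility and Hodge self-pairing of the $\bigwedge^k V$ supplies exactly the detail it omits.
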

We can do similar considerations for the case $B^-:=\left(\bigwedge \mathfrak{p}\otimes \mathfrak{p}\right)^K$. From the point of view of the classical invariant theory we have to look at the space 
$$\left(\bigwedge (V)^*\otimes V\right)^K.$$

As before we have that this space has dimension $2$ and a basis is given respectively by the covariant element $\theta_1$ corresponding to $[v_1,...,v_{2n-1}]\in \left(\bigwedge^{2n-2} (V)\otimes V\right)^*$ and the covariant $\theta_2$ corresponding to $(v_1,v_2)\in \left(\bigwedge^1 (V)\otimes V\right)^*$. So we have:
\begin{teorem}
The space $B^-:=\left(\bigwedge\mathfrak{p}\otimes\mathfrak{p}\right)^K$ is the vector space of dimension $2$ with basis (over $\mathbb{C}$) $\theta_1,\theta_2$.
\end{teorem}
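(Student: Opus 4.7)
The plan is to reduce the computation of $\dim B^-$ to a Schur-lemma count and then observe that $\theta_1,\theta_2$ live in different exterior degrees, so linear independence is automatic.

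First, using the $SO(2n-1)$-invariant nondegenerate symmetric form on $V=\mathbb{C}^{2n-1}$, I would identify $V^*\simeq V$ as $K$-modules, hence $\bigwedge^k V^*\simeq \bigwedge^k V$. Splitting by exterior degree,
\[
B^- \;=\; \bigoplus_{k=0}^{2n-1}\bigl(\bigwedge\nolimits^{k} V^*\otimes V\bigr)^K \;\simeq\; \bigoplus_{k=0}^{2n-1}\mathrm{Hom}_K\!\bigl(\bigwedge\nolimits^{k} V,\;V\bigr).
\]
Thus $\dim B^-$ equals the total multiplicity of the standard module $V$ inside the exterior algebra $\bigwedge V$.

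Second, I would invoke the classical fact that for the defining $m$-dimensional representation of $SO(m)$ (here $m=2n-1$) every exterior power $\bigwedge^{k} V$ with $0\le k\le m$ is irreducible, and these pieces are pairwise nonisomorphic except for the Hodge duality identification $\bigwedge^{k} V\simeq \bigwedge^{m-k} V$ provided by the volume form. In particular $\bigwedge^{k} V\simeq V$ if and only if $k=1$ or $k=m-1=2n-2$. Each of the corresponding $\mathrm{Hom}_K$-spaces is then one-dimensional by Schur's lemma, and all other summands vanish, giving $\dim B^- = 2$.

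Finally, $\theta_2\in(\bigwedge^{1}V^*\otimes V)^K$ (essentially the identity map $V\to V$) and $\theta_1\in(\bigwedge^{2n-2}V^*\otimes V)^K$ (its image under Hodge duality with respect to the volume form $[v_1,\ldots,v_{2n-1}]$) are both manifestly nonzero, and since they sit in different exterior degrees they are automatically linearly independent; hence they form a basis of $B^-$. The only non-formal input is the irreducibility and Hodge duality of the exterior powers of the defining representation of $SO(2n-1)$, which is entirely standard and which I would simply quote (either from a highest-weight analysis in type $B_{n-1}$, or from the general fact that $\bigwedge^{k} V$ is a fundamental representation for $0<k<n-1$ and $\bigwedge^{n-1}V\simeq \bigwedge^{n}V$ by Hodge duality).
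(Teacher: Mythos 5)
Your proposal is correct, but it reaches the dimension count by a different route than the paper. The paper works in the spirit of classical invariant theory: it invokes the first fundamental theorem for $SO(2n-1)$ to say that all multilinear (co)invariants of copies of $V=\mathbb{C}^{2n-1}$ are built from the bilinear form $(v_i,v_j)$ and the determinant $[v_1,\ldots,v_{2n-1}]$, and then observes that after antisymmetrization only the two covariants $\theta_1$ (from the bracket, in degree $2n-2$) and $\theta_2$ (from the form, in degree $1$) survive; the dimension $2$ is read off from this generating description. You instead decompose $B^-=\bigoplus_k \mathrm{Hom}_K(\bigwedge^k V, V)$ and use the representation-theoretic fact that for the odd orthogonal group every $\bigwedge^k V$ is irreducible and $\bigwedge^k V\simeq \bigwedge^j V$ only for $j=k$ or $j=2n-1-k$, so that Schur's lemma localizes the covariants in degrees $1$ and $2n-2$ and gives exactly one dimension in each. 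Both arguments are sound and yield the same two degrees; yours has the advantage of making the vanishing in all other degrees completely explicit (the paper's ``we can deduce easily'' hides exactly this step), while the paper's FFT formulation produces the explicit formulas for $\theta_1,\theta_2$ as written covariants and matches the method used throughout the rest of the section. Your final observation that $\theta_1,\theta_2$ are nonzero and lie in different exterior degrees, hence are independent, is exactly what is needed to conclude.
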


\subsection{Exceptional case}
The last case we consider is the symmetric space $E_6/F_4$. Let us start recalling some general results for Lie algebras (see for example \cite{HoSe}, \cite{Koszul}).

Let $\g$ be a finite dimensional Lie algebra over $\mathbb{C}$ and let $\mathfrak{u}\subset\g$ a subalgebra reductive in $\g$, i.e. the adjoint representation $\mathfrak{u}\times \g\rightarrow \g$ is completely reducible. We denote by $\mathfrak{p}$ a stable complement under the adjoint action, so $\g=\mathfrak{u}\oplus\mathfrak{p}$ and as a vector space $\mathfrak{p}\simeq\g/\mathfrak{u}$. In this setting we can state the following (see \cite{HoSe} Theorem 12):
\begin{teorem}\label{Hoch}
Let $\g, \mathfrak{u}$ as before, and assume furthermore that the restriction homomorphism maps $\left(\bigwedge\g\right)^{\g}$ onto $\left(\bigwedge\mathfrak{u}\right)^{\mathfrak{u}}$. Then we have 
$$\left(\bigwedge  \mathfrak u\right)^{\mathfrak{u}}\otimes \left(\bigwedge \mathfrak p\right)^{\mathfrak{u}}\simeq \left(\bigwedge  \mathfrak g\right)^{\mathfrak{g}}.$$
\end{teorem}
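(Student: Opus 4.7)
The plan is to exploit the $\mathfrak{u}$-module splitting $\g = \mathfrak{u} \oplus \p$ to decompose both sides of the desired isomorphism, and then use the surjectivity hypothesis as a lifting mechanism. First I would observe that, since $\mathfrak{u}$ is reductive in $\g$, the decomposition yields an isomorphism of $\mathfrak{u}$-modules $\bigwedge \g \simeq \bigwedge \mathfrak{u} \otimes \bigwedge \p$, where $\mathfrak{u}$ acts by the adjoint representation on the first factor and via $[\mathfrak{u}, \p] \subset \p$ on the second. Taking $\mathfrak{u}$-invariants and using complete reducibility, we get
$$\left(\bigwedge \g\right)^{\mathfrak{u}} = \left(\bigwedge \mathfrak{u}\right)^{\mathfrak{u}} \otimes \left(\bigwedge \p\right)^{\mathfrak{u}} \oplus R,$$
where $R$ is the complementary $\mathfrak{u}$-invariant subspace coming from pairings of non-trivial isotypic components of $\bigwedge \mathfrak{u}$ with those of $\bigwedge \p$.

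Next I would interpret the hypothesis. The natural algebra homomorphism $r: \bigwedge \g \to \bigwedge \mathfrak{u}$ induced by the projection $\g \to \mathfrak{u}$ corresponds, under the above decomposition, to the augmentation on the $\bigwedge \p$ factor (keep $\bigwedge^0 \p$, kill $\bigwedge^{\geq 1} \p$). The assumption that $r$ sends $\left(\bigwedge \g\right)^{\g}$ onto $\left(\bigwedge \mathfrak{u}\right)^{\mathfrak{u}}$ produces a graded linear section $s: \left(\bigwedge \mathfrak{u}\right)^{\mathfrak{u}} \to \left(\bigwedge \g\right)^{\g}$: for each $\alpha \in \left(\bigwedge \mathfrak{u}\right)^{\mathfrak{u}}$, the element $s(\alpha)$ is a genuine $\g$-invariant lift of $\alpha$, whose correction terms in positive $\p$-filtration degree are forced by $\g$-invariance.

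With these ingredients in hand, I would construct the isomorphism
$$\Phi: \left(\bigwedge \mathfrak{u}\right)^{\mathfrak{u}} \otimes \left(\bigwedge \p\right)^{\mathfrak{u}} \longrightarrow \left(\bigwedge \g\right)^{\g}.$$
I would filter $\bigwedge \g$ by $\p$-degree: since $[\mathfrak{u}, \mathfrak{u}] \subset \mathfrak{u}$, $[\mathfrak{u}, \p] \subset \p$, and $[\p, \p] \subset \g$, the adjoint $\g$-action is compatible with this filtration (with a possible increment of $\pm 1$ under the action of $\p$). On the associated graded, using $s$ together with the embedding $\left(\bigwedge \p\right)^{\mathfrak{u}} \hookrightarrow \bigwedge \g$ coming from $\p \hookrightarrow \g$, I would define $\Phi(\alpha \otimes \beta)$ to be the unique $\g$-invariant element whose leading term in $\left(\bigwedge \mathfrak{u}\right)^{\mathfrak{u}} \otimes \left(\bigwedge \p\right)^{\mathfrak{u}}$ equals $\alpha \otimes \beta$. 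Injectivity is then immediate, and surjectivity follows by induction on $\p$-degree: starting from any $\g$-invariant $\omega$, one reads off its leading $\alpha \otimes \beta$ from the decomposition, subtracts $\Phi(\alpha \otimes \beta)$, and iterates.

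The main obstacle is precisely the passage from $\mathfrak{u}$-invariance to $\g$-invariance: one must control how $\p$-invariance kills the residual summand $R$ and forces each lift of $\alpha \otimes \beta$ to exist and be unique. The surjectivity hypothesis is exactly what is needed so that the recursive correction procedure (solving, at each filtration step, an equation of the form $\mathrm{ad}_X \omega \equiv \mathrm{ad}_X(\alpha \otimes \beta) \pmod{F^{\bullet - 1}}$ for $X \in \p$) terminates inside $\left(\bigwedge \g\right)^{\g}$. Alternatively, a Poincar\'e-series comparison at the end, exploiting that both sides have the same graded dimension under the hypothesis, would close the argument without making the construction of $\Phi$ fully explicit.
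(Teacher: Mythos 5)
First, a point of comparison: the paper does not prove this statement at all --- it is quoted (as Theorem 12) from Hochschild--Serre \cite{HoSe} --- so your attempt has to stand on its own, and it does not. The decomposition $(\bigwedge\g)^{\mathfrak{u}}=(\bigwedge\mathfrak{u})^{\mathfrak{u}}\otimes(\bigwedge\p)^{\mathfrak{u}}\oplus R$ and the existence of a linear section $s$ of the restriction map are fine, but the entire content of the theorem is concentrated in the sentence where you define $\Phi(\alpha\otimes\beta)$ as ``the unique $\g$-invariant element whose leading term equals $\alpha\otimes\beta$'': neither existence nor uniqueness of such an element is established anywhere. The surjectivity hypothesis produces $\g$-invariant lifts only of elements of the form $\alpha\otimes 1$, i.e.\ of $(\bigwedge\mathfrak{u})^{\mathfrak{u}}$ itself; it says nothing about lifting a general $\beta\in(\bigwedge\p)^{\mathfrak{u}}$, and your ``recursive correction procedure'' is asserted to terminate rather than shown to: at each filtration step you would have to prove that the relevant obstruction vanishes and that the correction can be chosen $\mathfrak{u}$-invariantly, which is exactly the missing argument. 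The closing ``alternative'' is circular, since a Poincar\'e-series comparison presupposes that the two sides have equal graded dimension, which is precisely the statement being proved.

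Here is a concrete reason the argument as written cannot be completed: take $\mathfrak{u}=0$, which is reductive in $\g$, and for which the restriction map onto $(\bigwedge\mathfrak{u})^{\mathfrak{u}}=\CC$ is trivially surjective. Your construction would then produce, for every $\beta\in\bigwedge\g=(\bigwedge\p)^{\mathfrak{u}}$, a $\g$-invariant element with leading term $\beta$, i.e.\ an isomorphism $\bigwedge\g\simeq(\bigwedge\g)^{\g}$, which is false for any nonabelian $\g$. This shows that the second tensor factor cannot in general be the raw invariant space $(\bigwedge\p)^{\mathfrak{u}}$: in Hochschild--Serre's actual theorem it is the relative cohomology $H(\g,\mathfrak{u})$, computed from $(\bigwedge\p^*)^{\mathfrak{u}}$ with its induced Chevalley--Eilenberg differential, and the proof is a filtration (spectral sequence) argument in which surjectivity of the restriction, i.e.\ of the edge homomorphism, forces degeneration. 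The statement as the paper uses it is safe only because the pair $(\mathfrak e_6,\mathfrak f_4)$ is symmetric, so $[\p,\p]\subset\mathfrak{u}$ and the relative differential vanishes, making $(\bigwedge\p^*)^{\mathfrak{u}}$ equal to its cohomology. Any correct proof must bring in this homological input; your proposal never does.
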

In our case we have that $\g=Lie(E_6)$ and $\mathfrak{u}=Lie(F_4)$. If we denote by $N$ the algebra of invariants $\left(\bigwedge \g\right)^{\g}=\left(\bigwedge \g\right)^{G}$, we know by classical results that $N=\wedge (p_3,p_9,p_{11},p_{15},p_{17},p_{23})$, where $p_i$ is a primitive invariant of degree $i$. While $\left(\bigwedge \mathfrak{u}\right)^U=\wedge (q_3,q_{11},q_{15},q_{23})$, with $q_i=\pi_+ (p_i)$, where $\pi_+:\left(\bigwedge\g\right)^G\rightarrow \left(\bigwedge\mathfrak{u}\right)^U$ is the natural map induced by $p_+:\g\rightarrow \mathfrak{u}$.
So by the Theorem \ref{Hoch} we have that $A:=\left(\bigwedge \mathfrak{p}\right)^U$ is the exterior algebra $\wedge (r_9,r_{17})$, where $r_i=\pi_-(p_i)$ and $\pi_-:N\rightarrow A$ is induced by $p_-:\g\rightarrow \mathfrak{p}$.

Now we can consider $M=hom(\bigwedge \g,\g)^G$ and $B^+=hom(\bigwedge \mathfrak{p},\mathfrak{u})^U$. Further we can decompose in a natural way $M=M^-\oplus M^+$ as
$$M=\left(\bigwedge \g\otimes \mathfrak{p}\right)^G\oplus \left(\bigwedge \g\otimes \mathfrak{u}\right)^G,$$
so we can define an homorphism $\Gamma:M\rightarrow B^+$ such that $\phi\mapsto \pi_-(\phi^+)$. We recall (See \cite{PDP}) that $M$ is a free module on the subalgebra $\wedge (p_3,p_9,p_{11},p_{15},p_{17})\subset N$ with basis certain elements $u_{i-2},f_{i-1}$, where $i$ runs over the degrees of the generators of invariants and $u_j,f_j$ has degree $j$.
Using the software ''Lie'' we have the following table that describes the graduated module $B^+$:
\begin{equation}\label{tabella1}
\begin{tabular}{r|c|c|c|c|c|c|c|c|c|c|c|c|c|c|c|}
degree&0&1&2&3&4&5&6&7&8&9&10&11&12&13\\ \hline
dimension&0&0&1&0&0&0&0&1&0&0&1&1&0&0\\ \hline
\end{tabular}
\end{equation}

Remark that the dimension of $\mathfrak{p}$ is $26$, so by the Poincar\'e duality the previous table it is sufficient to describe the all space.
We deduce $dim(B^+)=8$ and each covariant has different degree.

Let us consider $g_i:=\Gamma(f_i)$, with $i=2,10$ and $v_i:=\Gamma(u_i)$, with $i=7,15$. We want to prove that $B^+$ is a free module on the one-dimensional algebra generated by $r_9$ with basis $g_2,g_{10},v_7,v_{15}$. So we only need to prove that all $g_i,v_i$ and their products by $r_9$ is nonzero.

We denote by $(\cdot,\cdot)_{\g}$ the Killing form of $\g$, we know that it induces, by restriction, a non degenerate invariant form on $\mathfrak{u}$, which will be denoted by $(\cdot,\cdot)_{\mathfrak{u}}$. Our goal is to prove that, up to a non zero scalar, 
\begin{equation}\label{ilprod}(g_2,v_{15})_{\mathfrak{u}}=(v_7,g_{10})_{\mathfrak{u}}=r_{17},\end{equation}
so our claim will follow. Let us start considering the first scalar product $(v_7,g_{10})_{\mathfrak{u}}$. We know (see \cite{PDP}) that, up to a non zero scalar, we have:
$$(u_7,f_{10})_{\mathfrak{g}}=p_{17}.$$
Let us write $u_7=u_7^++u_7^-$. Using the software ''Lie'' we have the following table describing $B^-:=\left(\bigwedge\mathfrak{p}\otimes \mathfrak{p}\right)^H$: 
\begin{equation}\label{tabella2}
\begin{tabular}{r|c|c|c|c|c|c|c|c|c|c|c|c|c|c|c|}
degree   &0&1&2&3&4&5&6&7&8&9&10&11&12&13\\ \hline
dimension&0&1&0&0&0&0&0&0&1&1&1&0&0&0\\ \hline
\end{tabular}
\end{equation}
We deduce $\pi_-(u_7^-)=0.$ On the other hand, we have
$$
\begin{aligned}p_{17}&=(u_7,f_{10})_{\g}\\
                    &=(u_7^++u_7^-,f_{10}^++f_{10}^-)_{\g}\\
                    &=(u_7^+,f_{10}^+)_{\g}+(u_7^-,f_{10}^-)_{\g},
\end{aligned}
$$
so applying $\pi_-$ we have
$$
\begin{aligned}r_{17}&=\pi_-(p_{17})\\
                   &=\pi_-((u_7,f_{10})_{\g})\\
                   &=(\pi_-(u_7^+),\pi_-(f_{10}^+))_{\mathfrak{h}}\\
                   &=(v_7,g_{10})_{\mathfrak{h}}.
\end{aligned}
$$
We can do the same considerations for $(g_2,v_{15})_{\mathfrak{u}}$. Furthermore, by linearity we have:
$$(r_9g_2,v_{15})_{\mathfrak{u}}=(r_9v_7,g_{10})_{\mathfrak{u}}=(g_2,r_9v_{15})_{\mathfrak{u}}=-(v_7,r_9g_{10})_{\mathfrak{u}}=r_9r_{17}\neq 0.$$

We can do similar considerations for the space $B^-$. Let us consider $v_i:=\pi_-(u_i^-),\ i=1,9$ and $g_i:=\pi_-(f_i^-),\ i=8,16$. By the table \eqref{tabella1} we deduce $\pi^-(u_i^+)=0,\ i=1,9$. As before, the Killing form $(\cdot,\cdot)_{\g}$ induces a non degenerate invariant form on $\mathfrak{p}$, we denote it by $(\cdot,\cdot)_{\mathfrak{p}}$ and we have, up to a nonzero scalar, that
$$(v_1,g_{16})_{\mathfrak{p}}=(g_8,u_9)_{\mathfrak{p}}=r_{17}, $$
and that
$$(r_9g_8,v_{9})_{\mathfrak{p}}=(r_9v_1,g_{16})_{\mathfrak{p}}=(g_8,r_9v_{9})_{\mathfrak{p}}=-(v_1,r_9g_{16})_{\mathfrak{p}}=r_9r_{17}\neq 0.$$
We can summarize our considerations in the following:
\begin{teorem}\begin{enumerate}
\item The space $B^+$, of dimension $2\cdot 2^2$, is a free module on the subalgebra $\wedge(r_9)\subset A$, with basis $g_2,g_{10},v_7,v_{15}$.
\item The space $B^-$, of dimension $2\cdot 2^2$, is a free module on the subalgebra $\wedge(r_9)\subset A$, with basis $g_8,g_{16},v_1,v_{9}$.
\end{enumerate}
\end{teorem}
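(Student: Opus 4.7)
The plan is to deduce the theorem by combining the dimension information encoded in the tables (3.9) and (3.10) with the Killing-form pairings set up just before the statement. Since the preceding subsection has already constructed the candidate generators $g_2,g_{10},v_7,v_{15}$ and $g_8,g_{16},v_1,v_9$ as images under the projections $\pi_\pm$, the work reduces to two things: (i) the candidate list has the right cardinality, and (ii) the list is actually linearly independent over $\wedge(r_9)$.

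For (i), I would sum the entries of table (3.9). Recalling $\dim\mathfrak p=26$ and invoking Poincar\'e duality (degree $k$ pairs with degree $26-k$, and degree $13$ happens to vanish), the dimensions in degrees $0$--$13$ double to give $\dim B^+=8=2\cdot 2^2$. Since $\wedge(r_9)$ has $\mathbb C$-dimension $2$, the $\wedge(r_9)$-span of any four elements has dimension at most $8$, so the proposed module is free of rank $4$ iff the eight elements $g_2,g_{10},v_7,v_{15},r_9g_2,r_9g_{10},r_9v_7,r_9v_{15}$ are linearly independent. The same count on table (3.10) gives $\dim B^-=8$ and reduces part (2) to the analogous independence.

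For (ii), I would observe that with $\deg r_9=9$ the eight elements occupy the eight distinct degrees $\{2,7,10,11,15,16,19,24\}$, which are exactly the degrees where the graded dimension of $B^+$ equals one (via duality from table (3.9)); at every other degree the graded piece of $B^+$ vanishes. Hence independence is equivalent to the individual non-vanishing of each of the eight elements. This is precisely what the Killing-form computation supplies: from $(g_2,v_{15})_{\mathfrak u}=(v_7,g_{10})_{\mathfrak u}=r_{17}$ (up to nonzero scalar) and the non-vanishing of $r_{17}\in\wedge(r_9,r_{17})$ we get $g_2,g_{10},v_7,v_{15}\neq 0$; likewise $(r_9 g_2,v_{15})_{\mathfrak u}=\pm r_9r_{17}\neq 0$ etc., so the $r_9$-multiples are nonzero too. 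For (2) I would run the identical argument with the Killing form restricted to $\mathfrak p$, using the pairings $(v_1,g_{16})_{\mathfrak p}=(g_8,u_9)_{\mathfrak p}=r_{17}$ and multiplying by $r_9$.

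The genuinely delicate point, which I would need to isolate carefully, is the descent of the Killing-form identities from $\mathfrak g$ down to $\mathfrak u$ (resp.\ $\mathfrak p$): one must verify that the ``cross'' terms in the decompositions $u_i=u_i^++u_i^-$ and $f_i=f_i^++f_i^-$ drop out after applying $\pi_-$. This in turn rests on the vanishings $\pi_-(u_7^-)=0$ and $\pi_-(u_i^+)=0$ for $i=1,9$, which are read off directly from the null entries of tables (3.9) and (3.10) in the relevant degrees. So the main obstacle is not conceptual but computational: correctness of the entire argument is contingent on the LiE-generated tables and on the $\mathfrak g$-level identities $(u_7,f_{10})_{\mathfrak g}=p_{17}$ (and its analogues) from \cite{PDP}. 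Once these inputs are accepted, the three-step plan above—dimension count, degree separation, Killing-form non-vanishing—closes the argument cleanly.
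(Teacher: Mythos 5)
Your proposal follows essentially the same route as the paper: read off $\dim B^{\pm}=8$ from the LiE tables via Poincar\'e duality, note that the eight candidate elements sit in the eight distinct degrees where the graded pieces are one-dimensional (so freeness reduces to non-vanishing of each element), and obtain that non-vanishing from the Killing-form pairings $(g_2,v_{15})_{\mathfrak u}=(v_7,g_{10})_{\mathfrak u}=r_{17}$ and their $r_9$-multiples, with the descent from $\mathfrak g$ justified by the vanishing of the cross terms under $\pi_-$. The one point you flag as delicate is exactly the point the paper also isolates and settles with the tables, so there is nothing to add.
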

\section{Conclusions}
We now summarize the previous results and deduce Theorem \ref{mainthe}. Let us start analyzing our results about spaces of type $B^+$. 

Let $\g$ be a simple complex Lie algebra, $\sigma:\g\rightarrow \g$ an indecomposable involution and $\g\simeq \mathfrak{p}\oplus\mathfrak{k}$ the Cartan decoposition with $\mathfrak{k}$ the fixed point set of $\sigma$. Let us assume that $\left(\bigwedge\mathfrak{p}^*\right)^{\mathfrak{k}}$ is an exterior algebra of type $\wedge (x_1,...,x_r)$, where the $x_i$'s are ordered by their degree and $r$ is such that $r:=rk(\mathfrak{g})-rk(\mathfrak{k})$. Then we have:
\begin{teorem}
The algebra $B^+:=\left(\bigwedge \mathfrak{p}^*\otimes \mathfrak{k}\right)^\mathfrak{k}$ is a free module of rank $2r$ on the subalgebra of $\left(\bigwedge \mathfrak{p}^*\right)^\mathfrak{k}$ generated by the elements $x_1,...,x_{r-1}$. 
\end{teorem}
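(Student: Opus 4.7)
The plan is a case-by-case verification built on the structural results of Sections 2 and 3. The hypothesis on $\left(\bigwedge\mathfrak{p}^*\right)^{\mathfrak{k}}$, combined with the classification recalled in Section 1.2, restricts us for $\g$ simple to four pairs: the symplectic and odd-orthogonal involutions on $\mathfrak{sl}$, the pair $(\mathfrak{so}(2n),\mathfrak{so}(2n-1))$, and the exceptional $(\mathfrak{e}_6,\mathfrak{f}_4)$. For each, I would compute $r=\mathrm{rk}(\g)-\mathrm{rk}(\mathfrak{k})$, list the generators $x_1,\ldots,x_r$ of $A$ in order of increasing degree, and exhibit a basis of $B^+$ of cardinality $2r$, free over the subalgebra $\wedge(x_1,\ldots,x_{r-1})$.

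For the two classical cases the data come from the corollaries to Theorems \ref{tony2} and \ref{tony5}. A small reconciliation is needed, since those results are formulated on the ambient $M_{2n}^+$ and $M_{2n+1}^+$, each containing a one-dimensional trivial summand $\mathbb{C}$ outside the actual $(-1)$-eigenspace $\mathfrak{p}\subset\mathfrak{sl}$. This summand contributes an extra exterior generator $T_0$ shared between $A$, its truncation $A_{n-1}$, and $B^+$; stripping the common factor $\wedge(T_0)$ preserves freeness, reduces the count of exterior generators of $A$ by one to match $r=\mathrm{rk}(\g)-\mathrm{rk}(\mathfrak{k})$, and leaves a basis of $B^+$ of the correct cardinality $2r$ over $\wedge(x_1,\ldots,x_{r-1})$. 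For $(\mathfrak{so}(2n),\mathfrak{so}(2n-1))$ and $(\mathfrak{e}_6,\mathfrak{f}_4)$ the dedicated theorems of Section 3 yield the result directly: in the former $r=1$ and $B^+=\mathbb{C}\omega_1\oplus\mathbb{C}\omega_2$ is two-dimensional over the trivial subalgebra $\mathbb{C}$; in the latter $r=2$ and the basis $\{g_2,g_{10},v_7,v_{15}\}$ is free over $\wedge(r_9)$.

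The only step that is genuinely more than bookkeeping is the exceptional case, already handled in Section 3.2. There one transports the identity $(u_7,f_{10})_{\g}=p_{17}$ down to $\mathfrak{u}$ via the projection $\pi_-$ to obtain $(g_2,v_{15})_{\mathfrak{u}}=(v_7,g_{10})_{\mathfrak{u}}=r_{17}$ (up to a nonzero scalar); combined with the bidegree multiplicities of the table in \eqref{tabella1} computed in LiE, this rules out any collapse among $g_2,g_{10},v_7,v_{15}$ and their $r_9$-multiples, so that they are indeed independent and freeness of rank $2r$ follows. That is precisely the step I expect to require care; the classical cases reduce to cross-referencing the structure theorems already established, and the $\mathfrak{so}(2n)/\mathfrak{so}(2n-1)$ case collapses to a two-dimensional linear-algebra verification.
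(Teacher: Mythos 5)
Your proposal is correct and follows essentially the same route as the paper: a case-by-case reduction to the structure theorems of Sections 2 and 3, where the only genuine work is reconciling the classical results (stated for the full ambient spaces $M_{2n}^{+}$, $M_{2n+1}^{+}$) with the symmetric-pair formulation on the traceless part $\mathfrak{p}$, the $\mathfrak{so}(2n)/\mathfrak{so}(2n-1)$ and $\mathfrak{e}_6/\mathfrak{f}_4$ cases being quoted directly. The paper performs that reconciliation by recomputing $\dim B^{+}$ as $(n-1)2^{n-1}$ (resp.\ $n2^{n}$) and passing to the covariants $Y^i:=X^i-\tfrac{1}{m}\mathrm{Tr}(X^i)$ together with the vanishing of $T_0$ on traceless matrices, which is just a concrete version of your factoring out of the common $\wedge(T_0)$.
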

\begin{proof}
The only thing we have to prove is that our results about the orthogonal and symplectic groups can be seen from this point of view in a way that implies our theorem. The case $B^+$ is particularly simple. In fact by little changes in the proof of the calculus of the dimensions we see that $dim(B^+)=(n-1)2^{n-1}$ for the symplectic case and $n2^{n}$ for the odd orthogonal case. Then our claim follows choosing as a basis the elements $Y^i:=X^i-\frac{1}{m}Tr(X^i)$, with $m=2n,2n+1$ and from the fact that for traceless matrices $T_0=0.$ 
\end{proof}

Further we have the same result for $B^-$. We just have to remark that $1\notin B^-$ and in both symplectic and orthogonal cases we can obtain the generator of higher degree, respectively $X^{4n-3}-\frac{1}{2n}Tr(X^{4n-3})$ and $X^{4n+1}-\frac{1}{2n+1}Tr(X^{4n+1})$ by the characterizing identities we have proved. We deduce:
\begin{teorem}
The algebra $B^-:=\left(\bigwedge \mathfrak{p}^*\otimes \mathfrak{p}\right)^\mathfrak{k}$ is a free module of rank $2r$ on the subalgebra of $\left(\bigwedge \mathfrak{p}^*\right)^\mathfrak{k}$ generated by the elements $x_1,...,x_{r-1}$. 
\end{teorem}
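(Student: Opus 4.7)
The proof will proceed case by case, following the classification of Section 1.2 into the non-simple case plus the four simple pairs; in each case the statement reduces to a result already established in the paper. The non-simple case (where $\g=\mathfrak{k}\oplus\mathfrak{k}$ with the flip involution) follows directly from Theorem \ref{inspiration}, since under $(x,-x)\mapsto x$ the subspace $\mathfrak{p}$ is isomorphic to $\mathfrak{k}$ as a $\mathfrak{k}$-module, so $(\bigwedge \mathfrak{p}^*\otimes \mathfrak{p})^\mathfrak{k}\cong (\bigwedge \mathfrak{k}^*\otimes \mathfrak{k})^\mathfrak{k}$, and $x_1,\ldots,x_{r-1}$ match the generators $P_1,\ldots,P_{r-1}$ of the base subalgebra in Theorem \ref{inspiration}. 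The two exceptional pairs are already settled in Section 3: for $(\mathfrak{so}(2n),\mathfrak{so}(2n-1))$ one has $r=1$, so the base subalgebra is $\mathbb C$ and the second theorem of Section 3.1 gives $B^-$ as the $2$-dimensional space spanned by $\theta_1,\theta_2$; for $(\mathfrak{e}_6,\mathfrak{f}_4)$ one has $r=2$, and part (2) of the theorem of Section 3.2 exhibits $B^-$ as a free $\wedge(r_9)$-module of rank $4=2r$ with basis $g_8,g_{16},v_1,v_9$.

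For the two remaining classical simple cases $(\mathfrak{sl}(2n),\mathfrak{sp}(2n))$ and $(\mathfrak{sl}(2n+1),\mathfrak{so}(2n+1))$, the plan is to lift the ambient module results of Section 2 down to the traceless subspace $\mathfrak{p}$. Writing $m$ for $2n$ or $2n+1$ as appropriate, the corollary following Theorem \ref{tony2} (resp.\ the corollary following Theorem \ref{tony5}) describes the ambient module $\left(\bigwedge (M_m^+)^*\otimes M_m^+\right)^G$ as a free $A_{n-1}$-module, with basis given by appropriate powers of the tautological covariant $X$. Three steps carry this ambient picture down to $B^-$. First, $T_0=\Tr(X)$ vanishes identically on $\mathfrak{p}$, so $A_{n-1}$ collapses to the subalgebra $\wedge(T_1,\ldots,T_{r-1})$ that appears in the statement. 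Second, each $X^i$ in the ambient basis is replaced by its traceless version $Y^i:=X^i-\frac{1}{m}\Tr(X^i)$, which is $\mathfrak{p}$-valued; in particular $Y^0\equiv 0$, which is the informal observation ``$1\notin B^-$''. Third, the characterising identity \eqref{laequa} (respectively its analogue derived inside the proof of Theorem \ref{tony5}), after setting $T_0=0$, expresses the top candidate $Y^{4n-3}$ (resp.\ $Y^{4n+1}$) as a $\wedge(T_1,\ldots,T_{r-1})$-linear combination of the lower $Y^{4j}$'s, using that $X^{4j}=Y^{4j}$ for $j\geq 1$. After these reductions, exactly $2r$ candidate basis elements survive.

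To close the argument I would match dimensions via a variant of Theorem \ref{ladim}: subtracting the trivial scalar direction from both the domain and the codomain in the combinatorial count yields $\dim B^-=(n-1)\cdot 2^{n-1}$ in the symplectic case and $\dim B^-=n\cdot 2^n$ in the odd-orthogonal case, each matching the predicted $2r\cdot 2^{r-1}$; hence the $2r$ surviving $Y^i$ are forced to be $\wedge(T_1,\ldots,T_{r-1})$-independent and form the claimed basis. The delicate point is the third step above: one must be sure that the only new relation forced by passing to traceless matrices is the one descended from the ambient characterising identity, i.e.\ that no additional dependence is introduced among the $Y^i$'s. This is precisely what the dimension count confirms, and is therefore the technical heart of the argument.
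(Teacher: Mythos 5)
Your proposal is correct and takes essentially the same approach as the paper: the non-simple and exceptional cases are quoted from Theorem \ref{inspiration} and Section 3, while for the two classical pairs the ambient results of Section 2 are descended to the traceless subspace via $T_0=0$, the traceless covariants $Y^i=X^i-\frac{1}{m}\mathrm{Tr}(X^i)$ (so $Y^0=0$), the characterizing identity eliminating the top power, and the dimension count. Your write-up is in fact more explicit than the paper's own two-line argument, but the ingredients and the roles they play are identical.
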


\end{document}